\numberwithin{equation}{section}
\theoremstyle{plain}
\newtheorem{theorem}{Theorem}[section]
\newtheorem{lemma}[theorem]{Lemma}
\newtheorem*{de-lemma}{Lemma}
\theoremstyle{remark}
\theoremstyle{definition}
\newcommand{\dd}{\mathrm{d}}
\newcommand{\R}{\mathbb{R}}
\newcommand{\n}{\textbf{\em n}}
\newcommand{\ve}{\epsilon}
\begin{document}

\title{Symmetry breaking and restoration  in the Ginzburg-Landau model of nematic liquid crystals}

\author{Marcel G. Clerc}
\address{Departamento de F\'{i}sica, FCFM, Universidad de Chile, Casilla 487-3, Santiago,
Chile.}
\email{marcel@dfi.uchile.cl}
\thanks{M.G. Clerc was partially supported by Fondecyt 1150507.}


\author{Micha{\l } Kowalczyk}
\address{Departamento de Ingenier\'{\i}a Matem\'atica and Centro
de Modelamiento Matem\'atico (UMI 2807 CNRS), Universidad de Chile, Casilla
170 Correo 3, Santiago, Chile.}
\email {kowalczy@dim.uchile.cl}
\thanks{M. Kowalczyk was partially supported by Chilean research grants Fondecyt 1130126 and 1170164, Fondo Basal CMM-Chile}

\author{Panayotis Smyrnelis}
\address{Centro
de Modelamiento Matem\'atico (UMI 2807 CNRS), Universidad de Chile, Casilla
170 Correo 3, Santiago, Chile.}
\email{psmyrnelis@dim.uchile.cl}
\thanks{P. Smyrnelis was partially supported by Fondo Basal CMM-Chile and Fondecyt postdoctoral grant 3160055}


\begin{abstract}
In this paper we study qualitative properties of global minimizers of the Ginzburg-Landau energy which describes light-matter interaction  in the theory of nematic liquid crystals near  the Friedrichs transition. This model is depends on  two parameters: $\epsilon>0$ which is small and represents the coherence scale of the system and $a\geq 0$ which represents the intensity of the applied laser light. In  particular we are interested in the phenomenon of symmetry breaking as $a$ and $\epsilon$ vary. We show that when  $a=0$  the global minimizer is radially symmetric and unique and that its  symmetry is instantly broken  as $a>0$ and then restored for sufficiently large values of $a$. Symmetry breaking is associated with the presence of a new type of topological defect which we named  the {\it shadow vortex}. The symmetry breaking scenario is a rigorous confirmation of experimental and numerical results obtained earlier in \cite{clerc2}. 
\end{abstract}

\maketitle

\section{Introduction}

In a suitable experimental set up \cite{LCLV-Vortex2015,PhysRevLett.111.093902, clerc2, Barboza2012, {Barboza2015}, Barboza2015A} involving a liquid crystal sample, a laser and a photoconducting cell one can observe light defects  such as kinks, domain walls and vortices. A concrete example of formation of optical vortices is presented in \cite{clerc2}. To describe this phenomenon starting from the classical Oseen-Frank energy near the Fredrichs transition one can reduce the problem to considering  the Ginzburg-Landau energy as it was explained in \cite{panayotis_1}.    
After some transformations involving scaling to non dimensional variables the latter energy takes form: 
\begin{equation}
\label{funct 0}
E(u)=\int_{\R^2}\frac{1}{2}|\nabla u|^2-\frac{1}{2\epsilon^2}\mu(x)|u|^2+\frac{1}{4\epsilon^2}|u|^4-\frac{a}{\epsilon} f(x)\cdot u,
\end{equation}
where $u=(u_1,u_2)\in  H^1(\R^2,\R^2)$ and $\epsilon>0$, $a\geq 0$ are real parameters. In the physical context described in \cite{clerc2} the functions $\mu$ and $f$ are specific:
\[
\mu(x)=e^{\,-|x|^2}-\chi, \qquad \mbox{with some}\ \chi\in (0,1), \qquad f(x)=-\frac{1}{2}\nabla \mu(x).
\]
Physically the order parameter  $u$  represents the intensity of  light induced by the  interaction between the  laser beam of Gaussian profile (given by $\mu$) and the nematic liquid crystal sample with the photoconducting cell mounted on top of it. This cell generates  electric field whose small, vertical component is described  above by $f$. The parameter $a$ is non dimensional and characterizes the intensity of the laser beam.  The two dimensional model (\ref{funct 0}) shows an excellent agreement with  experiments performed with physical parameters near the Fredrichs transition \cite{clerc2}.

All our results hold under more general hypothesis on $\mu$ and $f$ which we will state now. 
We suppose that $\mu \in C^\infty(\R^2,\R)$ is radial i.e. $\mu(x)=\mu_{\mathrm{rad}}(|x|)$, with $\mu_{\mathrm{rad}} \in C^\infty(\R,\R)$ such that $\mu_{\mathrm{rad}}$ has an even extension to the whole real line. 
We take  $f=(f_1,f_2)\in C^\infty(\R^2,\R^2)$  also to be  radial i.e. $f(x)= f_{\mathrm{rad}}(|x|)\frac{x}{|x|}$, with $f_{\mathrm{rad}} \in C^\infty(\R,\R)$ such that $f_{\mathrm{rad}}$ has an odd extension to the whole real line.
In addition we assume that 
\begin{equation}\label{hyp2}
 \begin{cases}
   \mu \in L^\infty(\R^2,\R),\ \mu_{\mathrm{rad}}'<0 \text{ in }(0,\infty), \text{ and $ \mu_{\mathrm{rad}}(\rho)=0$ for a unique $\rho>0$},\medskip \\
  \text{$f \in L^1(\R^2,\R^2)\cap L^\infty(\R^2,\R^2)$, and $ f_{\mathrm{rad}}>0$ on $(0,\infty)$}.
 \end{cases}
\end{equation}

The Euler-Lagrange equation of $E$ is
\begin{equation}\label{ode}
\ve^2 \Delta u+\mu(x) u-|u|^2u+\ve a f(x)=0,\qquad x\in \R^2.
\end{equation} 
We also write its weak formulation:
\begin{equation}\label{euler}
\int_{\R^2} -\epsilon^2 \sum_{j=1,2}\nabla u_j\cdot \nabla \psi_j+\mu u\cdot \psi-|u|^2u\cdot \psi+\epsilon a f\cdot\psi=0,\qquad  \forall \psi \in H^1(\R^2,\R^2),
\end{equation}
where $\cdot$ denotes the inner product in $\R^2$.
Note that due to the radial symmetry of $\mu$ and $f$, the energy \eqref{funct 0} and equation 
\eqref{ode} are invariant under the transformations $v(x)\mapsto g^{-1}v(gx)$, $\forall g \in O(2)$.

Our main purpose in this paper is to study qualitative properties of the global minimizers of $E$ as the parameters $a$ and $\epsilon$ vary. In general we will assume that $\epsilon>0$ is small and $a\geq 0$ is bounded uniformly in $\epsilon$.  
As we will see critical phenomena such as symmetry breaking and restoration, which are the focus of this paper, occur along curves of the form $a=a(\epsilon)$ in the $(\epsilon, a)$ plane.   

In Lemma \ref{lem exist min} we show that under the above assumptions there exists a global minimizer $v$ of $E$ in $H^1(\R^2,\R^2)$, namely  that $E(v)=\min_{H^1(\R^2,\R^2)} E$. 
In addition, we show that $v$ is a classical solution of \eqref{ode}. Some basic properties of the global minimizer are stated in: 
\begin{theorem}\label{theorem 1}
Let $v_{\epsilon, a}$ be the global minimizer of $E$, let $a\geq 0$ be bounded (possibly dependent  on $\epsilon$), let $\rho>0$ be the zero of $\mu_{\mathrm{rad}}$ and let $\mu_1:=\mu_{\mathrm{rad}}'(\rho)<0$. The following  statements hold:
\begin{itemize}
\item[(i)] Let $\Omega\subset D(0;\rho)$ be an open set such that $v_{\epsilon, a}\neq 0$ on $\Omega$, for every $\epsilon\ll 1$.
Then $|v_{\epsilon, a}|\to \sqrt{\mu}$ in $C^0_{\mathrm{loc}}(\Omega)$. 
\item[(ii)] For every $\xi=\rho e^{i\theta}$, we consider the local coordinates $s=(s_1,s_2)$ in the basis $(e^{i\theta},i e^{i\theta})$, and the 
rescaled minimizers:
\[
w_{\epsilon,a}(s)= 2^{-1/2}(-\mu_1\ve)^{-1/3} v_{\epsilon,a}\Big( \xi+\ve^{2/3} \frac{s}{(-\mu_1)^{1/3}}\Big).
\]
As $\ve\to 0$, the function $w_{\epsilon, a}$ converges in $C^2_{\mathrm{loc}}(\R^2,\R^2)$ up to subsequence, to a bounded in the half-planes $[s_0,\infty)\times \R$ solution of
\begin{equation}\label{pain}
\Delta y(s)-s_1 y(s)-2|y(s)|^2y(s)-\alpha=0, \qquad \forall s=(s_1,s_2)\in \R^2,
\end{equation}
with $\alpha=\lim_{\epsilon\to 0}\frac{a(\epsilon) f(\xi)}{\sqrt{2}\mu_1}$. 
\item[(iii)] For every $r_0>\rho$, we have $\lim_{\epsilon\to 0}\frac{v_{\epsilon,a}((r_0+t\epsilon)e^{i\theta})}{\epsilon}=
-\frac{a_0}{\mu_{\mathrm{rad}}(r_0)}f(r_0 e^{i\theta})$ uniformly when $t$ remains bounded and $\theta \in\R$, with $a_0:=\lim_{\epsilon\to 0}a(\epsilon)$.
\end{itemize}
\end{theorem}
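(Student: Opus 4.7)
The plan follows the standard blueprint for asymptotic analysis of Ginzburg-Landau functionals: global a priori bounds followed by matched expansions in the interior region $\{\mu>0\}$, the Painlev\'e boundary layer near $\{|x|=\rho\}$, and the exterior region $\{\mu<0\}$. Two preliminary estimates underpin the three parts. First, the uniform bound $\|v_{\epsilon,a}\|_\infty\le C$ independent of $\epsilon$, obtained via the maximum principle applied to the scalar equation
\[
\tfrac{\epsilon^2}{2}\Delta|v|^2=\epsilon^2|\nabla v|^2-\mu|v|^2+|v|^4-\epsilon a f\cdot v,
\]
derived by taking the inner product of \eqref{ode} with $v$. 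Second, an energy upper bound $E(v_{\epsilon,a})\le E_0$, obtained by testing against a well-chosen competitor close to $\sqrt{\mu_+}$ inside $D(0;\rho)$; completing the square and rearranging yields the integral estimate $\int_{\R^2}(\mu_+-|v|^2)^2\le C\epsilon^2$.

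For (i), $L^2_{\mathrm{loc}}$ convergence of $|v|^2$ to $\mu$ on $\Omega$ is immediate from this integral estimate. To upgrade to $C^0_{\mathrm{loc}}$ on the set where $v\neq 0$, fix $K\Subset\Omega$; the nonvanishing of $v$ combined with the $L^\infty$ bound and the integral estimate forces (via a Brezis-type clearing-out argument applied to the scalar equation for $|v|^2$) a uniform lower bound $|v|\ge\eta(K)>0$ for all sufficiently small $\epsilon$. With such a lower bound, the factorization $|v|^2(|v|^2-\mu)=\epsilon^2|\nabla v|^2-\tfrac{\epsilon^2}{2}\Delta|v|^2-\epsilon a f\cdot v$ combined with interior elliptic regularity on the scalar equation for $|v|^2$ (and the decoupling of phase from modulus on good regions that yields $\epsilon|\nabla|v||=O(\epsilon)$) produces uniform $C^{0,\alpha}$ bounds on $|v|^2-\mu$ on $K$, and Arzel\`a-Ascoli concludes.

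For (ii), the core is a direct substitution of the rescaling into \eqref{ode}. Since $\mu(\xi)=0$ and $\nabla\mu(\xi)=\mu_1 e^{i\theta}$, the Taylor expansion in the basis $(e^{i\theta},ie^{i\theta})$ gives $\mu(\xi+\epsilon^{2/3}s/(-\mu_1)^{1/3})=-(-\mu_1)^{2/3}\epsilon^{2/3}s_1+O(\epsilon^{4/3})$. Substituting and dividing by $\sqrt{2}(-\mu_1)\epsilon$ reduces the equation to
\[
\Delta_s w_{\epsilon,a}-s_1 w_{\epsilon,a}-2|w_{\epsilon,a}|^2 w_{\epsilon,a}-\alpha_\epsilon+o(1)=0,\qquad \alpha_\epsilon\to\alpha.
\]
The main obstacle is the sharp local bound $|v_{\epsilon,a}(y)|\le C\epsilon^{1/3}$ on a fixed $\epsilon$-independent neighborhood of $|x|=\rho$, which is exactly what ensures that $w_{\epsilon,a}$ is locally uniformly bounded. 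I would produce it by a comparison argument, constructing a radial supersolution for the scalar equation for $|v|^2$ that interpolates between the interior profile $\sqrt{\mu_+}$ and the exterior profile $\epsilon a|f|/|\mu|$ with amplitude $O(\epsilon^{1/3})$ across the Painlev\'e layer. Once this bound is in hand, interior elliptic regularity yields uniform $C^{2,\alpha}_{\mathrm{loc}}$ control of $w_{\epsilon,a}$, and Arzel\`a-Ascoli produces the subsequential limit solving \eqref{pain}; boundedness of the limit in every half-plane $[s_0,\infty)\times\R$ is inherited from the same local bound.

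For (iii), set $\tilde v_\epsilon:=v_{\epsilon,a}/\epsilon$, so that \eqref{ode} becomes $\epsilon^2\Delta\tilde v_\epsilon+\mu\tilde v_\epsilon-\epsilon^2|\tilde v_\epsilon|^2\tilde v_\epsilon+af=0$. On any compact subset of $\{|x|>\rho\}$ one has $\mu\le-\mu_*<0$, and a maximum principle argument on the scalar equation for $|\tilde v_\epsilon|^2$ (the sign of $\mu$ provides the one-sided estimate $|\mu|\,|\tilde v_\epsilon|^2\le a|f|\,|\tilde v_\epsilon|+\tfrac{\epsilon^2}{2}\Delta|\tilde v_\epsilon|^2$, combined with the smallness of $|\tilde v_\epsilon|$ on the inner boundary $\{|x|=\rho\}$ inherited from (i)) produces a uniform $L^\infty$ bound on $\tilde v_\epsilon$ on such compacts. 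Interior elliptic regularity then yields $C^{2,\alpha}$ bounds, and passing to the limit in the equation forces $\mu\tilde v_0+a_0 f=0$, giving $\tilde v_0=-a_0 f/\mu$; uniqueness of the limit upgrades subsequential to full convergence. The stated pointwise uniformity on $(r_0+t\epsilon)e^{i\theta}$ for bounded $t$ and $\theta\in\R$ follows from $C^0$ convergence on an annular compact neighborhood of $\{|x|=r_0\}$ combined with the smoothness of $-a_0 f(re^{i\theta})/\mu_{\mathrm{rad}}(r)$ at $r=r_0$.
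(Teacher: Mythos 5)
Your treatments of (ii) and (iii) are essentially the paper's. For (ii) the key input is exactly the supersolution bound $|v_{\epsilon,a}|\leq K(\sqrt{\mu^+}+\epsilon^{1/3})$ of Lemma \ref{l2}, proved by the comparison function you describe together with Kato's inequality; note, however, that this is \emph{not} a bound $O(\epsilon^{1/3})$ on a fixed neighborhood of $\{|x|=\rho\}$ (on the inner side $|v|\simeq\sqrt{\mu^+}=O(1)$ there), but only on an $O(\epsilon^{2/3})$-neighborhood, which is why the limit is asserted to be bounded merely on half-planes $[s_0,\infty)\times\R$. For (iii), you neither have nor need smallness of $|v|/\epsilon$ on the inner boundary $\{|x|=\rho\}$ "inherited from (i)": the comparison argument of Lemma \ref{s3gg} is run on a slightly larger annulus $\{|x|>\rho_1'\}$ with only the global $L^\infty$ bound on its boundary, and the exponential decay $e^{-cd(x,\partial S')/\epsilon}$ over a \emph{fixed} distance absorbs that boundary term. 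These are fixable imprecisions.

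The genuine gap is in (i). First, the quantitative estimate $\int(\mu^+-|v|^2)^2\leq C\epsilon^2$ is not available: any $H^1$ competitor approximating $\sqrt{\mu^+}$ pays a gradient cost of order $|\ln\epsilon|$ across $\{|x|=\rho\}$ (Lemma \ref{bbnm}), and the linear term contributes $O(a/\epsilon)$, so what one actually gets is $\int_{|x|<\rho}(|v|^2-\mu)^2\leq C(\epsilon^2|\ln\epsilon|+\epsilon a)$. This still yields $L^2_{\mathrm{loc}}$ convergence, but it breaks your clearing-out step: a clearing-out lemma needs $\epsilon^{-2}\int_{B(x,\lambda\epsilon)}(|v|^2-\mu)^2$ below a fixed threshold at \emph{every} point of $K$, whereas the global bound $C|\ln\epsilon|$ only confines the bad set to $O(|\ln\epsilon|)$ balls of radius $O(\epsilon)$; it cannot exclude points where $|v|$ is nonzero but arbitrarily small, and the pointwise hypothesis $v_{\epsilon,a}\neq 0$ on $\Omega$ is simply invisible to energy-localization arguments. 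The mechanism the paper uses to convert "non-convergence at $x_n$" into "a zero of $v$ near $x_n$" is the blow-up $\tilde v_n(s)=v_{\epsilon_n}(x_n+\epsilon_n s)$: by minimality the $C^2_{\mathrm{loc}}$ limit is a bounded \emph{minimal} entire solution of $\Delta\tilde V+(\mu(x_0)-|\tilde V|^2)\tilde V=0$, and by the Shafrir--Mironescu classification such a solution is either a constant of modulus $\sqrt{\mu(x_0)}$ (contradicting \eqref{either}-type non-convergence) or, up to isometry, the radial degree-one vortex of \eqref{likegl}, whose nonzero degree on a circle persists under $C^1$ convergence and forces a zero of $v_{\epsilon_n}$ in an $O(\epsilon_n)$-disc, contradicting the nonvanishing hypothesis. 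Without this rigidity input neither the lower bound $|v|\geq\eta(K)$ nor the upgrade from $L^2$ to uniform convergence goes through; in particular your assertion $|\nabla|v||=O(1)$ ("decoupling of phase from modulus") is itself a consequence of identifying the blow-up limit as a constant, so as written the argument is circular.
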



Looking at the energy $E$ it is evident that as $\epsilon\to 0$ the modulus of the global minimizer $|v_{\epsilon,a}|$ should approach a nonnegative 
root of the polynomial
\[
-\mu_{\mathrm{rad}}(|x|)y+y^3-a\epsilon f_{\mathrm{rad}}(|x|)=0,
\] 
or in other words $|v_{\epsilon, a}|\to \sqrt{\mu^+}$ as $\epsilon\to 0$ in some, perhaps weak, sense.  We observe for instance  that as a  corollary 
of Theorem \ref{theorem 1} (i) and Theorem \ref{theoremz} (ii) below we obtain that when $a=o(\ve|\ln \ve|)$ we have convergence  in 
$C^0_{\mathrm{loc}}(D(0; \rho))$. Because of the analogy between the functional $E$  and the Gross-Pitaevskii functional in theory of Bose-Einstein condensates we will call $\sqrt{\mu^+}$ the Thomas-Fermi limit  of the global minimizer (we will comment more on this connection later on). 
Theorem \ref{theorem 1} gives account on how non smoothness of  the limit  of $|v_{\epsilon, a}|$  is mediated near the circumference $|x|=\rho$, where 
$\mu$ changes sign, through the solution of (\ref{pain}). This  equation is a natural generalization of the second Painlev\'e equation 
\begin{equation}
\label{pain 1d}
y''-sy-2y^3-\alpha=0, \qquad s\in \R.
\end{equation}
In \cite{panayotis_1} we showed that this last equation plays an analogous role in the one dimensional, scalar version of the energy $E$:
\[
E(u,\R)=\int_{\R}\frac{\epsilon}{2}|u_x|^2-\frac{1}{2\epsilon}\mu(x)u^2+\frac{1}{4\epsilon}|u|^4-a f(x)u
\] 
where $\mu$ and $f$ are scalar functions satisfying similar hypothesis to those we have described above.  In this case the Thomas-Fermi limit of the global minimizer is simply $\sqrt{\mu^+(x)}$, which is non differentiable at  the points $x=\pm \xi$ which are the zeros of the even function $\mu$. Near these two points a rescaled version of the global minimizer approaches a solution of (\ref{pain 1d}) similarly as it is described in Theorem  \ref{theorem 1} (ii). It is very important to realize that not every solution of (\ref{pain 1d}) can  serve as the limit, actually there are only two such solutions: $y^+$ which is positive, decays to $0$ at $+\infty$  and grows like $\sqrt{|s|}$ at $-\infty$ and $y^-$ which is sign changing, has similar asymptotic behavior at $+\infty$ but $y^-(s)\sim -\sqrt{|s|}$ near $-\infty$. To show existence of $y^\pm$ is quite nontrivial and for proofs we refer to \cite{2005math.ph...8062C}, 
\cite{MR555581}, \cite{troy1}.
Moreover these two solutions are minimal.
To explain what this means  we go back  to the present problem since in our case the limiting solutions of (\ref{pain}) are necessarily minimal as well. Let 
\[
E_{\mathrm{P_{II}}}(u, A)=\int_A \left[ \frac{1}{2} |\nabla u|^2 +\frac{1}{2}  s_1 |u|^2 +\frac{1}{2} |u|^4+\alpha\cdot u\right].
\]
By definition  a  solution of (\ref{pain}) is minimal if
\begin{equation}\label{minnn}
E_{\mathrm{P_{II}}}(y, \mathrm{supp}\, \phi)\leq E_{\mathrm{P_{II}}}(y+\phi, \mathrm{supp}\, \phi)
\end{equation}
for all $\phi\in C^\infty_0(\R^2,\R^2)$.  This notion of minimality is standard for many problems in which the energy of a localized solution is actually infinite due to non compactness of the domain.  The minimality of the solution of  \eqref{pain}  arising from the limit in Theorem \ref{theorem 1} (ii) is a direct consequence of the proof in Section \ref{proofs}.

Regarding Theorem \ref{theorem 1} (iii) we note that since the degree of the local limit of the rescaled global minimizer in $|x|>\rho$ is a  function whose topological degree is $1$ one may expect that the zero level set of $v_{\epsilon, a}$ is non empty and that isolated zeros correspond  to topological defects which should locally resemble  the well known Ginzburg-Landau vortices. We will show that this is partly true as non standard vortices occur in the physical regime of parameters. 


Before stating our second result we introduce  the standard Ginzburg-Landau vortex of degree one which is the radially symmetric solution of
\begin{equation}\label{likegl}
\Delta \eta=(|\eta|^2-1)\eta,\qquad \ \eta:\R^2\to\R^2,
\end{equation}
such that $\eta(x)=\eta_{\mathrm{rad}}(|x|)\frac{x}{|x|}$. 
We say that   $u$ is a minimal solution of (\ref{likegl}) if
\[
E_{\mathrm{GL}}(u, \mathrm{supp}\, \phi)\leq E_{\mathrm{GL}}(u+\phi, \mathrm{supp}\, \phi),
\]
for all $\phi\in C^\infty_0(\R^2,\R^2)$, where 
\[
E_{\mathrm{GL}}(u,\Omega):=\int_\Omega \frac{1}{2}|\nabla u|^2+\frac{1}{4}(1-|u|^2)
\]
is the Ginzburg-Landau energy associated to \eqref{likegl}. It is known \cite{MR1267609} that any minimal solution of (\ref{likegl}) is either constant of modulus $1$ or has degree $\pm 1$.
Mironescu  \cite{mironescu} showed moreover 
that any minimal solution of  \eqref{likegl} is either a constant of modulus $1$ or 
(up to orthogonal transformation in the range and translation in the domain) the radial solution $\eta$. We also mention some properties of $\eta$:
\begin{itemize}
\item[(i)] $\eta_{\mathrm{rad}}'>0$ on $(0,\infty)$, $\eta_{\mathrm{rad}}(0)=0$, $\lim_{r\to\infty} \eta_{\mathrm{rad}}(r)=1$,
\item[(ii)] $\int_{\R^2}|\nabla \eta|^2=\infty$.
\end{itemize}

Our next theorem shows existence of topological defects of the global minimizer of $E$  in several regimes of the parameters $(\epsilon, a)$:
\begin{theorem}\label{theoremz}
Assume that $a(\epsilon)>0$, $a$ is bounded and $\lim_{\epsilon\to 0}\epsilon^{1-\frac{3\gamma}{2}}\ln a=0$ for some $\gamma \in [0,2/3)$. 
\begin{itemize}
\item[(i)]  For $\epsilon\ll 1$,
 the global minimizer $v_{\epsilon, a}$ has at least one zero $\bar x_\epsilon$ such that 
\begin{equation}\label{zest}
|\bar x_ \epsilon|\leq \rho+o(\epsilon^\gamma).
\end{equation}
In addition, any sequence of zeros of $v_{\epsilon, a}$, either satisfies \eqref{zest} or it diverges to $\infty$. 
\item[(ii)] For every $\rho_0\in(0,\rho)$, there exists $b_*>0$ such that when $\limsup_{\epsilon\to 0}\frac{a}{\epsilon|\ln\epsilon|}<b_*$  then any limit point $l\in \R^2$ of the set of zeros of the global minimizer satisfies 
\begin{equation}\label{zestbbb}
\rho_0\leq|l|\leq\rho.
\end{equation}
In addition if $a=o(\epsilon|\ln\epsilon|)$ then $|l|=\rho$. 
\item[(iii)] On the other hand, for every $\rho_0\in(0,\rho)$, there exists $ b^*>0$ such that when 
$\limsup_{\epsilon\to 0}\frac{a}{\epsilon|\ln\epsilon|^2}>b^*$,
 the set of zeros of the global minimizer has a limit point $l$ such that 
\begin{equation}\label{zestccc}
|l|\leq\rho_0.
\end{equation}
If $v_{\epsilon, a}(\bar x_\epsilon)=0$  and $\bar x_\ve\to l$ then up to a subsequence
\[
\lim_{\epsilon\to 0} v_{\epsilon, a}(\bar x_\epsilon+\epsilon s)\to \sqrt{\mu(l)}(g\circ\eta)(\sqrt{\mu(l)} s),
\]
in $C^2_{\mathrm{loc}}(\R^2)$, for some $g\in O(2)$. 
In addition if $\limsup_{\epsilon\to 0}\frac{a}{\epsilon|\ln\epsilon|^2}=\infty$ then $l=0$.
\end{itemize}
\end{theorem}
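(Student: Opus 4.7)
The plan is as follows. For part (i), the existence of a zero rests on a Brouwer degree computation based on Theorem \ref{theorem 1} (iii): on any circle $|x|=r_0$ with fixed $r_0>\rho$, $v_{\epsilon,a}/\epsilon$ converges uniformly in $\theta$ to $-a_0 f_{\mathrm{rad}}(r_0)\,e^{i\theta}/\mu_{\mathrm{rad}}(r_0)$, which is a positive multiple of $e^{i\theta}$ by (\ref{hyp2}). Hence the degree of $v_{\epsilon,a}$ along $|x|=r_0$ equals $1$ for $\epsilon$ small, producing a zero inside. To promote this to the bound $|\bar x_\epsilon|\le \rho + o(\epsilon^\gamma)$, I argue by contradiction: suppose a zero $\bar x_\epsilon$ with $|\bar x_\epsilon|=\rho+\epsilon^\gamma\sigma_\epsilon$, $\sigma_\epsilon\to+\infty$, and apply the Painlev\'e blow-up of Theorem \ref{theorem 1} (ii) at $\xi_\epsilon:=\rho\, e^{i\arg(\bar x_\epsilon)}$. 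In the rescaled coordinates, $\bar x_\epsilon$ has first coordinate $\sim \epsilon^{\gamma-2/3}\sigma_\epsilon\to+\infty$, while the $C^2_{\mathrm{loc}}$ limit $y$ is a bounded minimal solution of (\ref{pain}) on a half-plane; such a $y$ cannot vanish at points with $s_1$ arbitrarily large since the dominant balance forces $y\sim -\alpha/s_1$ at $s_1\to+\infty$ (or $y$ exponentially small if $\alpha=0$). The hypothesis $\epsilon^{1-3\gamma/2}\ln a\to 0$ ensures $\alpha$ stays bounded as $\epsilon\to 0$, so the limit is well defined.

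For part (ii), assume a sequence of zeros $\bar x_\epsilon\to l$ with $|l|<\rho_0<\rho$. The blow-up assertion of part (iii), whose proof is independent of the threshold analysis (see below), yields that on the scale $\epsilon/\sqrt{\mu(l)}$, $v_{\epsilon,a}$ converges to $\sqrt{\mu(l)}\,(g\circ\eta)(\sqrt{\mu(l)}\,\cdot)$; in particular a Bethuel--Brezis--H\'elein lower bound gives, for fixed $r\in(0,\rho-|l|)$,
\[
E(v_{\epsilon,a}, B(\bar x_\epsilon, r)) \ge \pi\mu(l)\log(r/\epsilon) - C.
\]
Now construct a competitor $\tilde v$ by replacing $v_{\epsilon,a}$ on $B(\bar x_\epsilon,r)$ with a non-vanishing profile of modulus $\sim\sqrt{\mu}$ matching the boundary values: its bulk energy change is $O(1)$ and the forcing term perturbs by at most $C a r^2/\epsilon$. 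Minimality of $v_{\epsilon,a}$ then forces $\pi\mu(l)\log(r/\epsilon)\le C+Car^2/\epsilon$, which for fixed $r$ and $\limsup_{\epsilon\to 0} a/(\epsilon|\ln\epsilon|)<b_*$ (with $b_*$ small enough, depending on $\rho_0$ through $\mu(l)\ge \mu_{\mathrm{rad}}(\rho_0)>0$) is impossible. The refinement $|l|=\rho$ when $a=o(\epsilon|\ln\epsilon|)$ follows by letting $\rho_0\nearrow\rho$ along a diagonal subsequence and combining with the upper bound $|l|\le\rho$ from part (i).

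Part (iii) is the reverse implication and requires exhibiting a low-energy competitor with an interior vortex. Build $\tilde v$ as a radially outward degree-one configuration: on $B(0,R\epsilon)$ set $\tilde v(x)=\sqrt{\mu(0)}\,\eta(\sqrt{\mu(0)}\,x/\epsilon)$, interpolate smoothly to $\sqrt{\mu^+(x)}\,x/|x|$ for $R\epsilon<|x|<\rho-\delta$, and glue to the outer expansion $\epsilon a f/(-\mu)$ in $|x|>\rho$ via the Painlev\'e profile of Theorem \ref{theorem 1} (ii) near $|x|=\rho$. The vortex core contributes $\pi\mu(0)|\ln\epsilon|+O(1)$ while the forcing gains $-ca/\epsilon$ with $c>0$ thanks to the alignment of $\tilde v$ with $f=f_{\mathrm{rad}}\,x/|x|$. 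Any non-vanishing competitor on $|x|<\rho_0$ has boundary degree $0$; writing it as $|u|e^{i\phi}$ with $\phi$ single-valued and splitting the forcing integral as $\int f_{\mathrm{rad}}|u|\cos(\phi-\theta)$, an $H^1$ estimate shows that reproducing the vortex forcing costs gradient energy of order $|\ln\epsilon|$, leaving a gap of the form $\pi\mu(0)|\ln\epsilon|-ca/(\epsilon|\ln\epsilon|)$ which becomes negative once $a>b^*\epsilon|\ln\epsilon|^2$, forcing the minimizer to vanish inside $|x|<\rho_0$. The blow-up convergence is then standard: $\tilde u_\epsilon(s):=v_{\epsilon,a}(\bar x_\epsilon+\epsilon s)/\sqrt{\mu(l)}$ solves an equation converging to (\ref{likegl}) after rescaling (since $\epsilon a f(\bar x_\epsilon+\epsilon s)\to 0$), minimality passes to the limit, and Mironescu's classification rules out any limit besides $g\circ\eta$; the case $l=0$ under $\limsup_{\epsilon\to 0} a/(\epsilon|\ln\epsilon|^2)=\infty$ follows by applying the same argument with $\rho_0$ arbitrarily small.

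The principal obstacle is the sharp comparison in part (iii): pinpointing the threshold $a\sim\epsilon|\ln\epsilon|^2$ (rather than $\epsilon|\ln\epsilon|$, which would close the gap with part (ii)) requires a careful modulus-phase decomposition of an arbitrary degree-zero competitor and a quantitative lower bound on the $H^1$ cost of approximating the multi-valued angle $\theta$ by a single-valued $\phi$. This is a logarithmic-capacity / Hardy-type estimate in the spirit of Sandier--Serfaty vortex analysis, and is where the extra $|\ln\epsilon|$ factor enters the upper threshold. The remaining ingredients, namely the degree argument in (i), the one-vortex removal estimate in (ii), and the blow-up classification in (iii), follow from standard Ginzburg--Landau tools once Theorem \ref{theorem 1} and the Painlev\'e profile are in hand.
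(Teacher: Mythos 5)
Your overall architecture (degree argument for existence of a zero, energy comparison for the lower threshold, an alignment-versus-degree-zero trade-off for the upper threshold, blow-up plus Mironescu's classification) matches the paper's, but three steps as written do not close, and two of them are where the real work lies.

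First, in part (i) the promotion of "a zero inside $|x|<r_0$ for fixed $r_0>\rho$" to the sharp bound $|\bar x_\epsilon|\le\rho+o(\epsilon^\gamma)$ cannot be done with the Painlev\'e blow-up of Theorem \ref{theorem 1} (ii). If $|\bar x_\epsilon|=\rho+\epsilon^\gamma\sigma_\epsilon$ with $\sigma_\epsilon\to\infty$ and $\gamma<2/3$, then in the $\epsilon^{2/3}$-coordinates the zero sits at $s_1\sim\epsilon^{\gamma-2/3}\sigma_\epsilon\to+\infty$, i.e.\ it escapes every compact set, and $C^2_{\mathrm{loc}}$ convergence to a solution of \eqref{pain} says nothing about it; your asserted asymptotics $y\sim-\alpha/s_1$ for the limit is also unproved and in any case irrelevant to points that leave the blow-up window. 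The intermediate annulus $\rho+\tau\epsilon^\gamma\le|x|\le\rho_1$ is covered by neither Theorem \ref{theorem 1} (ii) nor (iii); the paper handles it with a dedicated uniform (not local) argument, Lemma \ref{s3g}, which applies Kato's inequality and a comparison function to the projections $-u\cdot\nu$ on angular sectors to show $u\cdot\nu>0$ there for a whole range of directions $\nu$, giving simultaneously non-vanishing on the annulus, the uniform convergence $u/|u|\to x/|x|$, and winding number one on $|x|=\rho+\tau\epsilon^\gamma$. This is the step your hypothesis $\epsilon^{1-3\gamma/2}\ln a\to0$ is actually for.

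Second, in part (ii) your competitor does not exist: if the minimizer has a vortex, the degree of $v$ on $\partial B(\bar x_\epsilon,r)$ is $\pm1$, so there is no non-vanishing map on $B(\bar x_\epsilon,r)$ "matching the boundary values", and the inequality $\pi\mu(l)\log(r/\epsilon)\le C+Car^2/\epsilon$ cannot be derived by local surgery. The paper instead runs a global energy budget: the test function of Lemma \ref{bbnm} gives $\mathcal E(v)\le\frac{\pi|\mu_1|\rho}{6}|\ln\epsilon|+O(1)$; Lemma \ref{cvzaa} shows the Thomas--Fermi profile already forces $\int_{\rho_0\le|x|\le\rho}\frac12|\nabla v|^2\ge\frac{\pi|\mu_1|\rho}{6}|\ln\epsilon|+o(|\ln\epsilon|)$, saturating that bound; and Lemma \ref{techn} (a small-boundary-energy dichotomy, which is the honest version of your BBH lower bound) yields an extra $\lambda|\ln\epsilon|$ from any interior vortex. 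Since the forcing term can contribute at most $\frac{a}{\epsilon}M\|f\|_{L^1}$, the contradiction requires $a\ge b_*\epsilon|\ln\epsilon|$ --- the matching of the two $\frac{\pi|\mu_1|\rho}{6}$ constants is essential and absent from your accounting. Third, in part (iii) you correctly identify the crux (the $H^1$ cost for a degree-zero field to align with $x/|x|$ outside a set of measure $O(\epsilon|\ln\epsilon|/a)$) but leave it as a pointer to Sandier--Serfaty; the paper proves it directly by comparing $v$ with $\sigma e^{i\theta}$ to get $\int\sigma^2|\nabla\n|^2+\frac a\epsilon\int|f|\sigma(1-\frac{x}{|x|}\cdot\n)\le K|\ln\epsilon|$, and then a Cauchy--Schwarz argument on the periodic phases $\phi_r(\theta)-\theta$, which must swing by $4\pi/3$ inside the small bad set $F_r$, producing the lower bound $\gtrsim a/(\epsilon|\ln\epsilon|)$ and hence the $\epsilon|\ln\epsilon|^2$ threshold. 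Without that estimate, part (iii) is a plan rather than a proof.
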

To discuss physical consequences of this theorem we state: 
\begin{theorem}\label{th1n}
\begin{itemize}
 \item[(i)] When $a=0$ the global minimizer can be written as $v(x)=(v_{\mathrm{rad}}(|x|),0)$ with $v_{\mathrm{rad}} \in C^\infty(\R)$ positive. 
It is unique up to change of $v$ by $gv$ with $g \in SO(2)$.
\item[(ii)] Given $\epsilon>0$, there exists $A>0$ such that for every $a>A$, the global minimizer $v_{\epsilon,a}$ is unique and radial
i.e. $v(x)=v_{\mathrm{rad}}(|x|)\frac{x}{|x|}$.
\end{itemize}
\end{theorem}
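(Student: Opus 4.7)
For part (i), when $a=0$ the potential part of $E$ depends on $u$ only through $|u|$, so the key reduction is via Kato's inequality $|\nabla |u||^2\le|\nabla u|^2$: given any minimizer $u\in H^1(\R^2,\R^2)$, the admissible $(|u|,0)$ has the same potential and no larger gradient energy, so $w:=|u|\ge 0$ is a scalar minimizer of
\[
E_0(w)=\int_{\R^2}\tfrac{1}{2}|\nabla w|^2-\tfrac{\mu}{2\epsilon^2}w^2+\tfrac{1}{4\epsilon^2}w^4.
\]
Since $\inf E_0<0=E_0(0)$ (test with a cutoff of a suitable constant on $\{\mu>0\}$) we have $w\not\equiv 0$, and the strong maximum principle applied to $\epsilon^2\Delta w+\mu w-w^3=0$ then forces $w>0$. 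Uniqueness of this positive minimizer is the Benguria--Brezis--Lieb convexity trick: in the variable $\rho=w^2$ one has
\[
E_0=\int_{\R^2}\frac{|\nabla\rho|^2}{8\rho}-\frac{\mu}{2\epsilon^2}\rho+\frac{1}{4\epsilon^2}\rho^2,
\]
which is strictly convex on $\{\rho>0\}$ because $(\rho,p)\mapsto|p|^2/\rho$ is convex. Rotational invariance of $E_0$ together with uniqueness then forces $w$ to be radial, and $v_{\mathrm{rad}}(r):=w(re_1)$ belongs to $C^\infty(\R)$ with even extension by standard elliptic regularity together with the smoothness of a radial $H^1$-solution at the origin. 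Finally, the equality case in Kato's inequality identifies any vector minimizer with $w\vec e$ for a constant unit vector $\vec e\in S^1$, which is precisely the $SO(2)$-orbit of $(w,0)$.

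For part (ii) I would fix $\epsilon>0$ and use the scaling $u=(a\epsilon)^{1/3}\tilde u$, which recasts \eqref{ode} as
\[
\tfrac{\epsilon^{4/3}}{a^{2/3}}\Delta\tilde u+\tfrac{\mu}{(a\epsilon)^{2/3}}\tilde u-|\tilde u|^2\tilde u+f=0;
\]
the formal $a\to\infty$ limit is the algebraic equation $|\tilde u|^2\tilde u=f$, with unique solution $\tilde u_\infty(x)=f_{\mathrm{rad}}(|x|)^{1/3}\hat x$, already of the required radial form $\phi(r)\hat x$. I would first construct a radial critical point $v_a^r=\phi_a(r)\hat x$ by direct minimization of $E$ over the $O(2)$-invariant, weakly closed class of vector fields of this form (a coercive one-dimensional variational problem), and verify via sub/supersolutions in the rescaled ODE the uniform convergence $\phi_a/(a\epsilon)^{1/3}\to f_{\mathrm{rad}}^{1/3}$ on compacts of $(0,\infty)$. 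Once uniqueness of the global minimizer is known, the joint $O(2)$-invariance $v(x)\mapsto g^{-1}v(gx)$ of $E$ forces the unique minimizer to be $O(2)$-equivariant; decomposing $v=v_r\hat x+v_\theta\hat\theta$ in polar components and observing that the $x$-axis reflection acts by $v_\theta\mapsto -v_\theta$ then kills the tangential part, giving $v=\phi(|x|)\hat x$.

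The remaining, and main, step is uniqueness of the global minimizer for $a$ large. The heuristic is that the quartic term dominates the indefinite quadratic $-\tfrac{\mu}{2\epsilon^2}|u|^2$ once $|u|^2\gg\|\mu\|_\infty$, and the large forcing makes the global minimizer of order $(a\epsilon)^{1/3}f_{\mathrm{rad}}^{1/3}$. Concretely, given two critical points $u_1,u_2$ with energy at most $E(v_a^r)$, I would subtract the PDEs, test against $w=u_1-u_2$, and apply the pointwise monotonicity
\[
\bigl(|u_1|^2u_1-|u_2|^2u_2\bigr)\cdot(u_1-u_2)\ge\tfrac{1}{6}(|u_1|^2+|u_2|^2)|u_1-u_2|^2
\]
to get
\[
\epsilon^2\int|\nabla w|^2+\tfrac{1}{6}\int(|u_1|^2+|u_2|^2)|w|^2\le\int\mu|w|^2.
\]
Combined with an a priori lower bound $|u_i|^2\gtrsim(a\epsilon)^{2/3}f_{\mathrm{rad}}(|x|)^{2/3}$ on compact subsets of $\R^2\setminus\{0\}$, obtained from $C^2$-closeness of any global minimizer to $\tilde u_\infty$ through elliptic regularity in the rescaled equation, the cubic term dominates $\|\mu\|_\infty$ outside a small disk $B_{r_0(a)}$ and forces $w\equiv 0$ there. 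On the disk, the gradient term combined with the 2D Sobolev embedding $H^1\hookrightarrow L^q(B_{r_0})$ for large $q$ allows one to absorb the residual $\int_{B_{r_0}}\mu|w|^2$ once $r_0$ is chosen small enough depending on $\epsilon$ and on the size of the cubic coercivity outside. The principal obstacle is precisely this small-disk step: the cubic coercivity $|u|^2|w|^2$ disappears where $u$ vanishes, and one must extract sufficient control from $\epsilon^2|\nabla w|^2$ alone to beat the constant $\mu(0)>0$; making the quantitative match between $r_0$, the boundary-layer scale $\sim\epsilon^{1/2}/a^{1/4}$, and the coercivity constants work uniformly as $a\to\infty$ is the delicate point.
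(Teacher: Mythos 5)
Your argument is correct but follows a genuinely different route from the paper. You reduce to the scalar problem via $|\nabla |u||\le |\nabla u|$, get positivity from the strong maximum principle, obtain uniqueness of the scalar minimizer from the hidden convexity of $\rho\mapsto \int \frac{|\nabla\rho|^2}{8\rho}-\frac{\mu}{2\epsilon^2}\rho+\frac{1}{4\epsilon^2}\rho^2$ (Benguria--Brezis--Lieb / Brezis--Oswald), deduce radial symmetry from uniqueness plus rotational invariance, and recover the vector statement from the equality case $|\nabla v|^2=|\nabla|v||^2+|v|^2|\nabla(v/|v|)|^2$. The paper instead never passes to the modulus: it shows $v$ takes values in a half-line by comparing $v$ with $(|v_1|,v_2)$ and invoking the strong unique continuation principle for elliptic systems, proves radial symmetry by reflection across arbitrary lines (again via unique continuation), and proves uniqueness by the identity $E(v)=-\frac{1}{4\epsilon^2}\int|v|^4$ (which forces two distinct radial profiles to intersect) followed by a gluing-plus-unique-continuation contradiction. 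Your route is more self-contained (no unique continuation needed) and arguably more standard for this class of functionals; the paper's route avoids the convexity trick and the discussion of the equality case in Kato's inequality. Both need the preliminary observation that $v\not\equiv 0$, which holds for $\epsilon$ small.

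\textbf{On part (ii).} Here there is a genuine gap, and it sits exactly where you say it does. Your plan is: subtract the Euler--Lagrange equations of two minimizers, test with $w=u_1-u_2$, use the monotonicity of $u\mapsto|u|^2u$ to produce the coercive term $\int(|u_1|^2+|u_2|^2)|w|^2$, and absorb $\int\mu|w|^2$ using the a priori lower bound $|u_i|\gtrsim(a\epsilon)^{1/3}f_{\mathrm{rad}}^{1/3}$. But $f_{\mathrm{rad}}(0)=0$ and $\mu(0)>0$, so the cubic coercivity degenerates on a neighborhood of the origin while the term to be absorbed is there at full strength; you would need a quantitative Poincar\'e-type absorption of $\int_{B_{r_0}}\mu|w|^2$ by $\epsilon^2\int|\nabla w|^2$ plus the residual coercivity on an annulus, uniformly as $a\to\infty$. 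You explicitly flag this as unresolved, and without it the proof of uniqueness (hence of radial symmetry, which you correctly derive \emph{from} uniqueness) is not complete.

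The paper sidesteps this difficulty entirely with two soft ingredients. First, an \emph{exact} identity valid for any $H^1$ solution $u$ of \eqref{ode} and any $\psi\in H^1$:
\begin{equation*}
E(u+\psi)-E(u)=\int_{\R^2}\Big(\frac{1}{2}|\nabla \psi|^2+\frac{|u|^2-\mu}{2\epsilon^2}|\psi|^2+\frac{(|\psi|^2+2\,u\cdot\psi)^2}{4\epsilon^2}\Big),
\end{equation*}
so that uniqueness follows once the quadratic form $\psi\mapsto\int\frac{1}{2}|\nabla\psi|^2+\frac{|u|^2-\mu}{2\epsilon^2}|\psi|^2$ is positive definite --- no pointwise domination of $\mu$ by $|u|^2$ is required. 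Second, a compactness lemma: if $\mu_a\le\mu$ and $\mu_a\to-\infty$ almost everywhere, then for $a$ large $\int\mu_a|\psi|^2<\epsilon^2\int|\nabla\psi|^2$ for all $\psi\neq0$. Since the rescaling limit (your $\tilde u_\infty$, which the paper also establishes) gives $|v_{\epsilon,a}(x)|\to\infty$ for every $x\neq0$, the set where $\mu-|v|^2$ fails to diverge to $-\infty$ is the single point $x=0$, which is negligible for the a.e.\ hypothesis; the degenerate disk around the origin that blocks your argument simply never appears. If you want to salvage your approach, the cleanest fix is to replace the monotonicity inequality by this exact identity and the small-disk absorption by the compactness lemma; otherwise you must actually prove the uniform quantitative estimate you postponed.
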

Actually radial minimizers such as in Theorem \ref{th1n} (ii) exist for all $(\ve, a)$ with $\epsilon>0$ and $a\geq 0$ . Indeed it can be shown that 
 in the class $H^1_{\mathrm{rad}}(\R^2,\R^2):=\{u \in H^1(\R^2,\R^2): gu(x)=u(gx),\ \forall g \in O(2 ) \}$ of radial maps 
(or $O(2)$-equivariant maps), there exists $u \in H^1_{\mathrm{rad}}(\R^2,\R^2)$ such that $E(u)=\min_{H^1_{\mathrm{rad}}(\R^2,\R^2)} E$. Existence of the radial minimizer $u$ follows as in the proof of Lemma \ref{lem exist min}, and clearly $u$ is a critical point of $E$ in the subspace $H^1_{\mathrm{rad}}(\R^2,\R^2)$. 
In view of the radial symmetry of $\mu$ and $f$, one can show that the Euler-Lagrange equation \eqref{euler} 
holds for every $\phi \in H^1(\R^2,\R^2)$ (cf. \cite{palais}). As a consequence, 
$u(x)=u_{\mathrm{rad}}(|x|)\frac{x}{|x|}$ is a $C^\infty$ classical solution of \eqref{ode}. In addition, 
proceeding as in the proof of Theorem \ref{th1n}, 
it is easy to see that the radial minimizer is unique and satisfies $u_{\mathrm{rad}}>0$ on $(0,\infty)$ for every $\epsilon>0$ and $a>0$.

Theorem \ref{th1n} shows that when $a=0$ the global minimizer of $E$ inherits the one dimensional radial profile of $\mu$.
On the other hand it would be natural to expect that when $a>0$ the forcing term $\epsilon a f$ in \eqref{ode} induces a global minimizer $v\in H^1_{\mathrm{rad}}$.
Theorem \ref{theoremz} shows that this is not the case.
Indeed, according to the statement (ii) of Theorem \ref{theoremz} the symmetry the global minimizer is not radial   as soon as $\limsup_{\epsilon\to 0}\frac{a}{\epsilon|\ln\epsilon|}<b_*$, since this condition implies that no limit point of the zeros of the global minimizers belongs to $D(0;\rho_0)$ (cf. Lemma~\ref{cvz}). We point out that the hypothesis $\lim_{\epsilon\to 0}\epsilon^{1-\frac{3\gamma}{2}}\ln a=0$ for some $\gamma \in [0,2/3)$ was assumed in Theorem~\ref{theoremz} only to ensure the existence of a sequence of zeros satisfying \eqref{zest}, in particular the assertion of Lemma  \ref{cvz} remains valid for any $a$. 
Theorem \ref{theoremz} (iii) states further increase of the value of $a$ leads to the restoration of the symmetry at least in the limit 
$\epsilon\to 0$. 
Finally, Theorem \ref{th1n} (ii) shows that the symmetry is completely restored provided that $a$ is large enough.

The energy $E$ belongs to the class of Ginzburg-Landau type functionals that appear for example  in the theory of superconductivity or in the theory of Bose-Einstein condensates (see for instance \cite{book:971664}, \cite{MR1707887}, \cite{MR1696100}, \cite{doi:10.1142/S0129055X00000411}, \cite{MR1731999},  \cite{serfaty1}, \cite{aftalion6}, \cite{MR2186426}, \cite{MR2062641}, \cite{Ignat2006260}, \cite{Millot_energyexpansion} and the references therein). The Gross-Pitaevskii energy functional appearing in the latter theory has form
\[
E_{\mathrm{GP}}(u)=\int_{\R^2} \frac{1}{2}|\nabla u|^2 +\frac{1}{2\ve^2} V(x)|u|^2+\frac{1}{4\ve} |u|^4- \Omega x^\perp\cdot  (iu, \nabla u) \quad \mbox{subject to}\quad \|u\|_{L^2}=1 ,
\] 
where $\Omega\in \R$ is the angular velocity,  $ (iu, \nabla u)=i u\nabla \bar u -i\bar u \nabla u$ and $V(x)= x_1 +\Lambda x_2$ is a harmonic 
trapping potential (more general  nonnegative, smooth $V$ are considered as well).  The relation between $E_{\mathrm{GP}}$ and $E$ can be understood   if  we
recast the Gross-Pitaevskii energy  taking into account the mass  constraint in the form 
\begin{equation}
E_{\mathrm{GP}}(u)=\int_{\R^2} \frac{1}{2}|\nabla u|^2 +\frac{1}{4\ve^2}\left[\left(|u|^2-a(x)\right)^2-\left(a^-(x)\right)^2\right]^2- \Omega x^\perp\cdot  (iu, \nabla u), 
\label{gp 1}
\end{equation}
where $a(x)=a_0-V(x)$, $a_0$ is determined so that $\int_{\R^2} a^+=1$ and $a^\pm$ 
are the positive and negative parts of $a$. 
The angular velocity has certain threshold values at which different global minimizers appear. 
When  $\Omega=\mathcal O(|\ln \ve|)$ is below a certain critical value $\Omega_1$  
global minimizers are   vortex free \cite{Ignat2006260,MR2772375, {MR3355003}}, while at some 
other critical values $\Omega_2>\Omega_1$ global minimizers have at least one vortex \cite{Ignat2006260,Millot_energyexpansion}, which looks locally like the radially 
symmetric degree $\pm 1$ solution to the Ginzburg-Landau equation (\ref{likegl})
These localized structures 
have  analogues for the energy functional $E$: when $a=0$ the global minimizer is a vortex free state and when $a\sim \epsilon|\ln\epsilon|^2$ the global minimizer has one vortex that looks like the standard Ginzburg-Landau vortex (see Figure \ref{Fig-3 vortices} (a)). Possible qualitative difference   between the two functionals is manifested in the intermediate region for the values of $a$.  When $a$ satisfies the hypothesis of Theorem \ref{theoremz} (ii)  the global minimizer has a vortex which however can not be easily associated with the standard vortex (see Figure \ref{Fig-3 vortices} (c)). Based on numerical simulations we conjecture that, rather than coming from the equation (\ref{likegl}),  its rescaled  local profile   comes from the generalized second Painlev\'e equation  (\ref{pain}). We call this new type of defect the shadow vortex (the name is inspired from the physical context, see \cite{panayotis_1}).  Note that the amplitude of the shadow vortex is very small, of order $O(\epsilon^{1/3})$, in contrast with the standard vortex whose amplitude is of order $O(1)$. 
Numerical simulations show that there exists standard vortex minimizers localized at $|\bar x_\epsilon|=\rho_0$ strictly between $0$ and $\rho$ --- this happens when $a\sim \epsilon|\ln\epsilon|$. 
Despite the similarities between our model and the Gross-Pitaevskii functional it is not clear whether the shadow vortex exists for the Bose-Einstein condensate --- proving  this is  a delicate matter because, unlike the energy of the standard vortex which is of order $|\ln\epsilon|$,  the energy of the shadow vortex is relatively small.

\begin{figure}
\includegraphics[width=15cm,height=8cm]{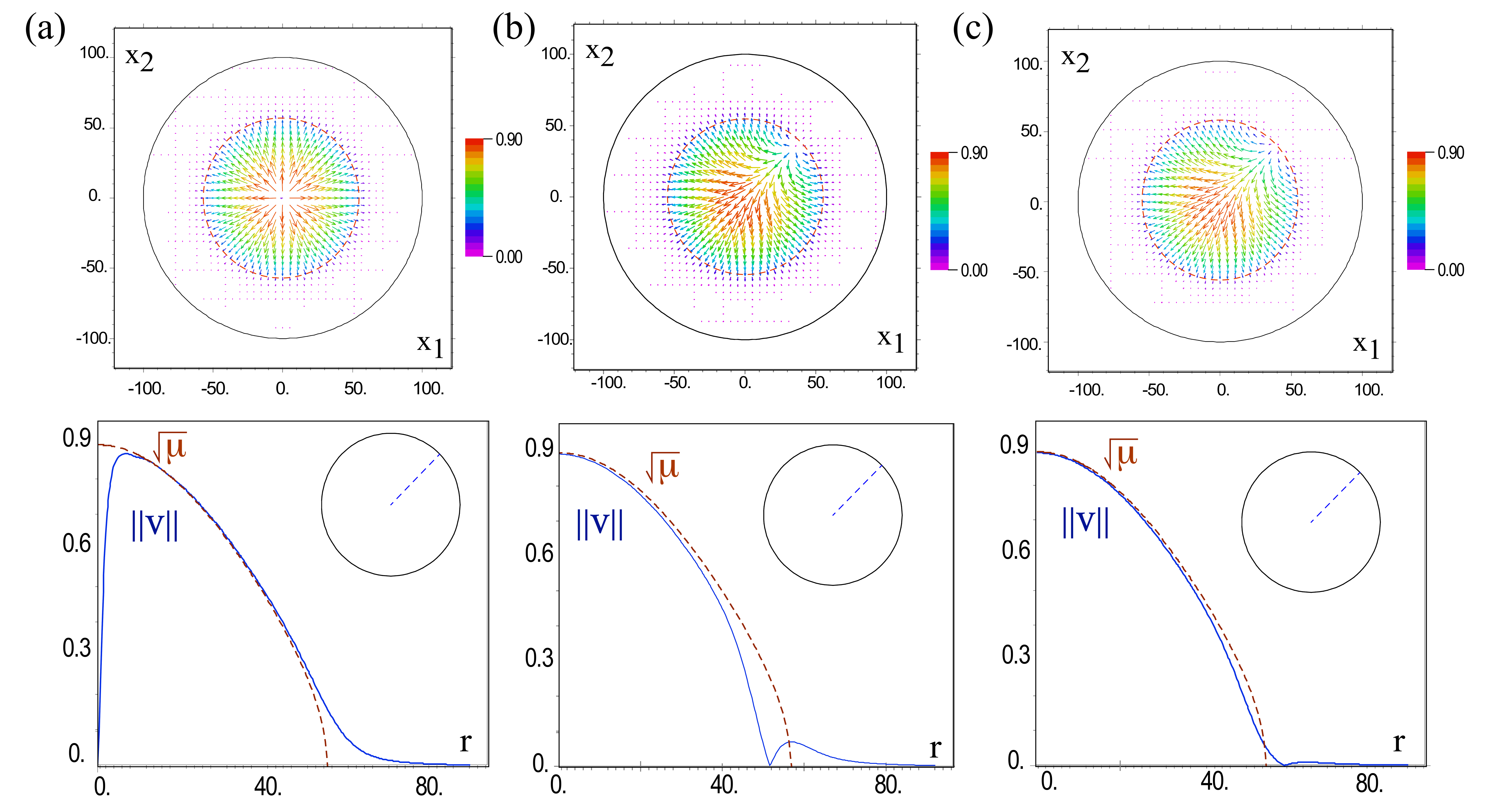}
\caption{(a) The standard vortex described in Theorem \ref{theoremz} (iii)   (b) The standard vortex near the boundary of the set $\mu>0$ and (c) The shadow vortex  described in Theorem \ref{theoremz}  (ii) in case $a=o(\epsilon|\ln\epsilon|)$. The upper panel shows the global minimizer $v=(v_1, v_2)$ as a vector field in $\R^2$. In the lower panel a radial section of $|v|$ taken at the angle $\theta$ indicated in the upper right corner and compared with the Thomas-Fermi limit $\sqrt{\mu^+}$. Numerical simulations where performed after   rescaling the original  spacial variable $x\mapsto x/\epsilon$.}
\label{Fig-3 vortices}
\end{figure} 
%

The symmetry breaking scenario described above can be seen from another angle since the shadow vortex can be interpreted as a transient  vortex state between the homogenous state and the standard vortex state as $a$ is increasing. In the Ginzburg-Landau theory of superconductivity the onset of vortex state is associated with the hysteresis phenomenon near the lower critical field where the energy of the non-vortex state (Meissner solution) equals that of the single vortex state \cite{doi:10.1137/S0036141096298060}.  The difference  with  the case considered here seems  due to the non smoothness of the Thomas-Fermi limit and the mediating effect of the solution of the Painlev\'e equation---in essence it is a boundary layer phenomenon. Still, the results of numerical simulations shown in Figure \ref{Fig-3 vortices} suggest that the shadow vortex may exist and be locally stable beyond  $a=o(\epsilon|\ln \epsilon|)$ and that the critical value of $a$ when the global minimizer becomes the standard vortex occurs when its the energy and that of the shadow vortex are equal. This would point out to the presence of hysteresis also in our case.   

This paper is organized as follows: in the next section we establish existence and basic properties of the global minimizer and in  section \ref{proofs} we prove our theorems.

\section{General results for minimizers and solutions}
In this section we gather general results for minimizers and solutions that are valid for any values of the parameters $\epsilon> 0$ and $a\geq 0$. We first prove the existence of global minimizers.
\begin{lemma}\label{lem exist min}
For every $\epsilon> 0$ and $a\geq 0$, there exists $v \in H^1(\R^2,\R^2)$ such that $E(v)=\min_{H^1(\R^2,\R^2)} E$. As a consequence, $v$ is a $C^\infty$ classical solution of \eqref{ode}, 
and moreover 
$v(x)\to 0$ as $|x|\to \infty$.
\end{lemma}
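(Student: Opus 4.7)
The plan is the direct method of the calculus of variations, combined with a standard elliptic bootstrap for smoothness and interior $L^\infty$-estimates for the decay at infinity. The only nontrivial point is coercivity of $E$ on $H^1(\R^2,\R^2)$: since $\mu$ changes sign and $\R^2$ is unbounded, a naive global completion-of-the-square is not $L^1$-integrable. I split at the disk $D(0;\rho)=\{\mu\geq 0\}$: on $D(0;\rho)$ I write $-\tfrac{\mu}{2\ve^2}|u|^2+\tfrac{1}{4\ve^2}|u|^4=\tfrac{1}{4\ve^2}(|u|^2-\mu)^2-\tfrac{\mu^2}{4\ve^2}$ and combine it with $(|u|^2-\mu)^2\geq\tfrac{1}{2}|u|^4-\mu^2$, so the subtracted constant $\tfrac{1}{4\ve^2}\int_{D(0;\rho)}\mu^2$ is finite because the disk is bounded. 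Outside $D(0;\rho)$ both terms are non-negative. The forcing is absorbed via $|\int f\cdot u|\leq \|f\|_{L^{4/3}}\|u\|_{L^4}$ (using $f\in L^1\cap L^\infty\subset L^{4/3}$) and Young's inequality. This yields an estimate of the form $E(u)\geq c_1\|\nabla u\|_{L^2}^2+c_2\|u\|_{L^4}^4-C(\ve,a)$, so $E$ is bounded below and any minimizing sequence $\{u_n\}$ is bounded in $\|\nabla u_n\|_{L^2}$ and $\|u_n\|_{L^4}$. An $L^2$-bound is recovered because the strict monotonicity and boundedness of $\mu_{\mathrm{rad}}$ force $|\mu|\geq c_0>0$ outside some large ball $B_{R_0}$; then $\int_{\R^2\setminus B_{R_0}}\tfrac{|\mu|}{2\ve^2}|u_n|^2$, already present in the coercivity estimate, controls $\|u_n\|_{L^2(\R^2\setminus B_{R_0})}$, while $\|u_n\|_{L^2(B_{R_0})}\leq|B_{R_0}|^{1/2}\|u_n\|_{L^4}$ by H\"older. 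Hence $\{u_n\}$ is bounded in $H^1$.

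Passing to a subsequence, $u_n\rightharpoonup v$ in $H^1$, with strong convergence in $L^p_{\mathrm{loc}}(\R^2)$ for every $p<\infty$ by Rellich. For $E(v)\leq \liminf E(u_n)$: the gradient term is weakly lsc in $L^2$; $\int f\cdot u_n\to\int f\cdot v$ since $f\in L^2$; and for $\int|u_n|^4$ and $-\int\mu|u_n|^2$ I split at a ball $B_R\supset D(0;\rho)$. On $B_R$, strong $L^4$-convergence gives equality. On $\R^2\setminus B_R$, the integrands $|u_n|^4$ and $(-\mu)|u_n|^2$ are non-negative convex functions of $u_n$, so weak lower semicontinuity in $L^4$, respectively $L^2$, yields the desired $\liminf$-inequalities. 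This proves $v$ is a global minimizer, and variations in $C^\infty_0(\R^2,\R^2)$ give the weak Euler--Lagrange equation \eqref{euler}.

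Smoothness follows from a standard elliptic bootstrap: $v\in L^p_{\mathrm{loc}}$ for every $p<\infty$ by 2D Sobolev embedding, so $|v|^2v\in L^p_{\mathrm{loc}}$, hence $v\in W^{2,p}_{\mathrm{loc}}$, then $C^{1,\alpha}_{\mathrm{loc}}$, and iterating with the $C^\infty$-regularity of $\mu$ and $f$ yields $v\in C^\infty(\R^2,\R^2)$. For the decay at infinity, interior $L^\infty$-estimates on unit balls applied to \eqref{ode} yield
\[
\|v\|_{L^\infty(B(x,1))}\leq C\bigl(\|v\|_{L^2(B(x,2))}+\||v|^3\|_{L^2(B(x,2))}+\|\mu v\|_{L^2(B(x,2))}+\|f\|_{L^2(B(x,2))}\bigr),
\]
and since $v\in L^2\cap L^6$ (by $H^1\hookrightarrow L^p$ for $p<\infty$ in 2D), $\mu\in L^\infty$ and $f\in L^2$, each term on the right tends to $0$ as $|x|\to\infty$, so $v(x)\to 0$.

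The main obstacle throughout is the coercivity step: the sign-changing weight $\mu$ together with the non-compactness of $\R^2$ rule out one-shot energy estimates, and the argument hinges on exploiting \emph{both} the boundedness of the region $\{\mu\geq 0\}$ and the uniform negativity of $\mu$ at infinity (which is guaranteed by the strict monotonicity of $\mu_{\mathrm{rad}}$ and $\mu\in L^\infty$). Once $H^1$-boundedness of minimizing sequences is in hand, the remaining steps are routine.
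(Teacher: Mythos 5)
Your proof is correct and follows the same overall strategy as the paper (direct method, then regularity and decay for the minimizer), but two of the key steps are implemented with genuinely different tools. For coercivity the paper does not complete the square at $D(0;\rho)$: it splits the quartic term in half and derives two \emph{pointwise} lower bounds for the integrand --- the combination $-\tfrac{\mu}{2\ve^2}|u|^2+\tfrac{1}{8\ve^2}|u|^4$ is bounded below by a constant multiple of the characteristic function of the bounded set $\{\mu+\delta>0\}$, and $\tfrac{1}{8\ve^2}|u|^4-\tfrac{a}{\ve}f\cdot u$ is bounded below by $-C|f|^{4/3}\in L^1$ --- which yields control of $\int\tfrac12|\nabla u_n|^2+\tfrac{\delta}{2\ve^2}|u_n|^2$ in one stroke, whereas you recover the $L^2$ bound a posteriori from the $L^4$ bound on a ball together with the uniform negativity of $\mu$ at infinity; both routes exploit exactly the same structural facts ($f\in L^1\cap L^\infty$ and boundedness of $\{\mu\geq 0\}$) and your H\"older/Young treatment of the forcing is a legitimate substitute for the paper's pointwise one. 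For lower semicontinuity the paper uses Fatou's lemma with a.e.\ convergence where you invoke convexity and weak lower semicontinuity on the exterior region; these are interchangeable. The most substantive divergence is the decay at infinity: the paper first proves $v\in L^\infty$ by a truncation argument (componentwise truncation at a level beyond which the potential is monotone strictly decreases the energy), deduces from the equation that $\nabla v$ is bounded, hence $|v|^4$ is uniformly continuous, and concludes from $\int_{\R^2}|v|^4<\infty$; your interior elliptic $L^\infty$ estimate on translated unit balls combined with $v\in L^2\cap L^6(\R^2)$ reaches the same conclusion and bypasses the truncation lemma entirely. Both arguments are complete; the only slip in yours is the immaterial exponent $|B_{R_0}|^{1/2}$ in the H\"older step, which should be $|B_{R_0}|^{1/4}$.
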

\begin{proof}
We first show that $\inf\{\,E(u):\ u \in H^1(\R^2,\R^2) \, \}>-\infty$. 
To see this, we regroup the last three terms in the integral of $E(u)$. 
Setting $I_\delta:=\{ x \in \R^2: \mu(x)+\delta>0\}$, for $\delta>0$ sufficiently small such that $I_\delta$ is bounded, we have
$$ -\frac{1}{2\epsilon^2}\mu(x)|u|^2+\frac{1}{8\epsilon^2}|u|^4< 0 \Longleftrightarrow u^2< 4\mu \Longrightarrow x\in I_{\delta},$$
thus
$$ -\frac{1}{2\epsilon^2}\mu(x)|u|^2+\frac{1}{8\epsilon^2}|u|^4\geq -\frac{2}{\epsilon^2}\left\| \mu\right\|^2_{L^\infty} \chi_\delta,$$
where $\chi_\delta$ is the characteristic function of $I_\delta$. On the other hand,
$$\frac{1}{8\epsilon^2}|u|^4-\frac{a}{\epsilon} f(x)\cdot u<0\Longrightarrow |u|^3\leq 8 a\epsilon|f|\Longrightarrow|fu|\leq  (8a\epsilon)^{1/3}|f|^{4/3},$$
thus
$$\frac{1}{8\epsilon^2}|u|^4-\frac{a}{\epsilon} f(x)\cdot u\geq-\frac{ 8^{1/3}a^{4/3}}{\epsilon^{2/3}}|f|^{4/3}.$$
Next, we notice that $E(u)\in \R$ for every $u \in H^1(\R^2,\R^2)$, thanks to the imbedding $H^1(\R^2)\subset L^p(\R^2)$, for $2\leq p <\infty$. 
Now, let $m:=\inf_{H^1}E>-\infty$, and let $u_n$ be a sequence such that $E(u_n) \to m$. Repeating the previous computation, we can bound
\begin{align}
\int_{\R^2}\frac{1}{2}| \nabla u_n|^2+\frac{\delta}{2\epsilon^2}|u_n|^2 &=E(u_n)+\int_{\R^2}\frac{1}{2\epsilon^2}(\mu(x)+\delta)|u_n|^2-\frac{1}{4\epsilon^2}|u_n|^4+\frac{a}{\epsilon} f(x)\cdot u_n\nonumber \\
&\leq E(u_n)+\frac{2}{\epsilon^2}(\left\| \mu\right\|_{L^\infty}+\delta)^2 |I_\delta|+\frac{ 8^{1/3}a^{4/3}}{\epsilon^{2/3}}\int_{\R^2}|f|^{4/3}.\nonumber
\end{align}
From this expression it follows that $\left\|u_n \right\|_{H^{1}(\R^2,\R^2)}$ is bounded. 
As a consequence, for a subsequence still called $u_n$, $u_n\rightharpoonup v$ weakly in $H^1$, and thanks to a diagonal argument we also have 
$u_n\to v$ in $L^2_{\mathrm{loc}}$, and almost everywhere in $\R^2$. Finally, by lower semicontinuity 
$$\int_{\R^2} |\nabla v|^2\leq \liminf_{n \to \infty} \int_{\R^2}  |\nabla u_n|^2,$$ and by Fatou's Lemma we have 
$$\int_{\R^2} |v|^4\leq \liminf_{n \to \infty} \int_{\R^2}  |u_n|^4, \text{ and }\int_{\mu\leq 0}-\frac{1}{2\epsilon^2}\mu |v|^2 \leq \liminf_{n \to \infty} \int_{\mu\leq 0}-\frac{1}{2\epsilon^2}\mu |u_n|^2.$$ 
To conclude, it is clear that $$\int_{\mu> 0}-\frac{1}{2\epsilon^2}\mu |v|^2= \lim_{n \to \infty} \int_{\mu > 0}-\frac{1}{2\epsilon^2}\mu |u_n|^2,$$ thus $m\leq E(v)\leq\liminf_{n \to \infty}  E(u_n)=m$.
Next, we check that $v$ is bounded. This follows from the fact that there exists a constant $M$ such that for every $x\in\R^2$ and $i=1,2$ 
the function
$$u_i\to -\frac{1}{2\epsilon^2}\mu(x)|u|^2+\frac{1}{4\epsilon^2}|u|^4-\frac{a}{\epsilon} f(x)\cdot u$$ is strictly increasing on $[M,\infty)$ 
(resp. strictly decreasing on $(\infty,-M]$)
independently of the other variable $u_j$ ($j\neq i$, $j=1,2$). Thus, if we truncate a map $u=(u_1,u_2)$ by setting $\tilde u_i=\min(M,\max(u_i,-M))$, 
the truncated map $\tilde u$ has smaller energy than $u$.
Clearly the boundedness of $v$ implies by \eqref{ode} the boundedness of $\Delta v$, and $\nabla v$. In particular, $v$ and $|v|^4$ are uniformly continuous. 
As a consequence if $|v(x_n)|>\delta>0$ for a sequence $|x_n|\to\infty$, 
then we would have $|v|>\delta/2$ on a ball $B(x_n,r)$ of radius $r$ independent of $n$, and also $\int_{\R^2}|v|^4=\infty$, which is impossible. 
This proves the asymptotic convergence of $v$ to $0$.  
\end{proof}

In the sequel, we will always denote the global minimizer by $v$.
%
To study the limit of solutions as $\epsilon\to 0$, we need to establish uniform bounds in the different regions considered in Theorem \ref{theorem 1}:
\begin{lemma}\label{s3}
For $\epsilon a$ belonging to a bounded interval, let $u_{\epsilon,a}$ be a solution of \eqref{ode} converging to $0$ as $|x|\to \infty$. Then, the solutions $u_{\epsilon,a}$  and the maps $\epsilon\nabla u_{\epsilon,a}$ are uniformly bounded. 
\end{lemma}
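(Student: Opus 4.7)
The plan is to split the lemma into two parts: first establish the $L^\infty$ bound on $u_{\epsilon,a}$ directly by a maximum principle argument applied to $|u_{\epsilon,a}|^2$, and then reduce the gradient bound to standard elliptic regularity after an $\epsilon$-rescaling.

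For the first step, since $u_{\epsilon,a}(x)\to 0$ as $|x|\to\infty$, the function $|u_{\epsilon,a}|^2$ attains its maximum at some point $x_0\in\R^2$ (unless $u_{\epsilon,a}\equiv 0$, in which case there is nothing to prove). I would compute
\[
\tfrac{1}{2}\Delta|u_{\epsilon,a}|^2 = |\nabla u_{\epsilon,a}|^2 + u_{\epsilon,a}\cdot\Delta u_{\epsilon,a},
\]
which is $\leq 0$ at $x_0$, so $u_{\epsilon,a}(x_0)\cdot \Delta u_{\epsilon,a}(x_0)\leq -|\nabla u_{\epsilon,a}(x_0)|^2\leq 0$. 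Multiplying \eqref{ode} by $u_{\epsilon,a}$ at $x_0$ and using this sign information yields
\[
|u_{\epsilon,a}(x_0)|^4 \leq \mu(x_0)|u_{\epsilon,a}(x_0)|^2 + \epsilon a\, f(x_0)\cdot u_{\epsilon,a}(x_0) \leq \|\mu\|_{L^\infty}|u_{\epsilon,a}(x_0)|^2 + \epsilon a\|f\|_{L^\infty}|u_{\epsilon,a}(x_0)|.
\]
Dividing by $|u_{\epsilon,a}(x_0)|$ gives a cubic inequality $|u_{\epsilon,a}(x_0)|^3 \leq \|\mu\|_{L^\infty}|u_{\epsilon,a}(x_0)|+\epsilon a\|f\|_{L^\infty}$, from which a uniform bound $\|u_{\epsilon,a}\|_{L^\infty}\leq C$ follows, with $C$ depending only on $\|\mu\|_{L^\infty}$, $\|f\|_{L^\infty}$ and the bound on $\epsilon a$.

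For the gradient bound I would rescale. Set $\tilde u(y):=u_{\epsilon,a}(\epsilon y)$; then $\tilde u$ solves
\[
\Delta \tilde u(y) + \mu(\epsilon y)\tilde u(y) - |\tilde u(y)|^2\tilde u(y) + \epsilon a\, f(\epsilon y) = 0
\]
on $\R^2$, with $\|\tilde u\|_{L^\infty}\leq C$ by the first step and the inhomogeneity $\mu(\epsilon y)\tilde u - |\tilde u|^2\tilde u - \epsilon a f(\epsilon y)$ uniformly bounded in $L^\infty(\R^2)$. Standard interior $W^{2,p}$ (or $C^{1,\alpha}$) elliptic estimates on balls of radius $1$, applied to each component of $\tilde u$, give $\|\nabla \tilde u\|_{L^\infty}\leq C'$ independently of $(\epsilon,a)$, and the identity $\nabla\tilde u(y)=\epsilon\nabla u_{\epsilon,a}(\epsilon y)$ translates this to the required uniform bound on $\epsilon\nabla u_{\epsilon,a}$.

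I do not expect a real obstacle here: the only delicate point is ensuring that the maximum of $|u_{\epsilon,a}|$ is attained (handled by the decay at infinity) and that the elliptic constants in the rescaled problem do not depend on $\epsilon$ (which is immediate since the Laplacian has constant coefficients and the right-hand side is controlled uniformly).
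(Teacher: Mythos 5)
Your proof is correct and follows essentially the same route as the paper: a maximum-principle evaluation of the equation at a point where the extremum is attained (guaranteed by the decay at infinity), followed by deducing the bound on $\epsilon\nabla u_{\epsilon,a}$ from the equation. The only cosmetic differences are that you apply the maximum principle to $|u_{\epsilon,a}|^2$ while the paper argues componentwise on $u_1$ and $u_2$ (both yield the same cubic inequality), and that your rescaling plus interior elliptic estimates makes explicit the step the paper leaves implicit in its final sentence.
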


\begin{proof}
We drop the indexes and write $u:=u_{\epsilon,a}$.
Since $|f|$, $\mu$, and $\epsilon a$ are bounded, the roots of the cubic equation in the variable $u_1$
$$u_1^3+(u_2^2-\mu(x))u_1-\epsilon a f_1(x)=0$$ belong to a bounded interval, for all values of $x$, $u_2$, $\epsilon$, $a$. 
If $u_1$ takes positive values, then it attains its maximum $0\leq \max_{\R^2}u_1=u_1(x_0)$, at a point $x_0\in\R^2$. 
In view of \eqref{ode}: $$0\geq\epsilon^2 \Delta u_1(x_0)=u_1^3(x_0)+(u_2^2(x_0)-\mu(x_0))u_1(x_0)-\epsilon a f_1(x_0),$$ thus it follows that 
$u_1(x_0)$ is uniformly bounded above. In the same way, we prove the uniform lower bound for $u_1$, and the uniform bound for $u_2$. 
The boundedness of $\epsilon\nabla u_{\epsilon,a}$ follows from \eqref{ode} and the uniform bound of  $u_{\epsilon,a}$.
\end{proof}

\begin{lemma}\label{l2}
For $\epsilon\ll 1$ and $a$ belonging to a bounded interval, let $u_{\epsilon,a}$ be a solution of \eqref{ode} converging to $0$ as $|x|\to\infty$. Then, there exist a constant  $K>0$ such that 
\begin{equation}\label{boundd}
|u_{\ve,a}(x)|\leq K(\sqrt{\max(\mu (x),0)}+\ve^{1/3}), \quad \forall x\in \R^2.
\end{equation}
As a consequence, if for every $\xi=\rho e^{i\theta}$ we consider the local coordinates $s=(s_1,s_2)$ in the basis $(e^{i\theta},i e^{i\theta})$, then the rescaled maps $\tilde u_{\epsilon,a}(s)=\frac{u_{\ve,a}(\xi+ s\epsilon^{2/3})}{\epsilon^{1/3}}$ are uniformly bounded on the half-planes $[s_0,\infty)\times\R$, $\forall s_0\in\R$. 
\end{lemma}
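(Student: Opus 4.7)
The plan is a blow-up argument by contradiction. Suppose the bound fails: there exist sequences $\ve_n\to 0$, bounded $a_n$, and $x_n\in\R^2$ realizing (up to a factor~$2$) the supremum
\[
\sigma_n:=\sup_{x\in\R^2}\frac{|u_n(x)|}{\sqrt{\mu^+(x)}+\ve_n^{1/3}}\to\infty,
\]
attained since $u_n\to 0$ at infinity. Put $\lambda_n:=|u_n(x_n)|$ and $r_n:=\lambda_n/\sigma_n=\sqrt{\mu^+(x_n)}+\ve_n^{1/3}$. By Lemma~\ref{s3}, $\lambda_n\leq C_0$, so $r_n\to 0$ and $\mu^+(x_n)\to 0$; moreover $\lambda_n\geq\sigma_n\ve_n^{1/3}$.

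First I would carry out a boundary-layer estimate to localize $x_n$ near $\{\mu=0\}$. In any region $\{\mu\leq -c\}$, $W:=|u|^2$ satisfies $\ve^2\Delta W\geq 2cW-2\ve a\|f\|_\infty\sqrt{W}$, so comparison with an exponential barrier of scale $\ve/\sqrt c$ together with Lemma~\ref{s3} yields $|u(x)|\leq C_0 e^{-\sqrt c\,d(x)/(C\ve)}+O(\ve/\sqrt c)$ at distance $d(x)$ from the boundary of that region. Applied at $x_n$ with $c\sim|\mu(x_n)|$ in a ball of radius $\sim|\mu(x_n)|/|\mu_1|$ (using $\mu_{\mathrm{rad}}'<0$), this forces $|\mu(x_n)|=o(\sigma_n^2\ve_n^{2/3})$, hence $\mu(x_n)/\lambda_n^2\to 0$.

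The main step is a blow-up at scale $\ell_n:=\ve_n/\lambda_n\leq\ve_n^{2/3}/\sigma_n\to 0$. Set $\tilde u_n(t):=\lambda_n^{-1}u_n(x_n+\ell_n t)$, so $|\tilde u_n(0)|=1$ and
\[
\Delta\tilde u_n+\lambda_n^{-2}\mu(x_n+\ell_n t)\,\tilde u_n-|\tilde u_n|^2\tilde u_n+\ve_n a\lambda_n^{-3}f(x_n+\ell_n t)=0.
\]
One checks $\ve_n a/\lambda_n^3\leq a/\sigma_n^3\to 0$, $\ell_n/\lambda_n^2=\ve_n/\lambda_n^3\to 0$, and by the previous step $\lambda_n^{-2}\mu(x_n+\ell_n t)\to 0$ on compacts. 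The supremum hypothesis and Lipschitz continuity of $\mu$ give the crucial local bound
\[
|\tilde u_n(t)|\leq\frac{\sqrt{\mu^+(x_n+\ell_n t)}+\ve_n^{1/3}}{r_n}\leq 1+\sqrt{CR/\sigma_n}\quad\text{for }|t|\leq R,
\]
using $\ell_n/r_n^2\leq 1/\sigma_n$ (since $r_n\geq\ve_n^{1/3}$). Elliptic regularity then produces a $C^2_{\mathrm{loc}}$ subsequential limit $v:\R^2\to\R^2$ with $|v|\leq 1$, $|v(0)|=1$, satisfying $\Delta v=|v|^2 v$. Since $\Delta|v|^2=2|\nabla v|^2+2|v|^4\geq 0$, $|v|^2$ is subharmonic with interior maximum $1$ at the origin, hence $|v|^2\equiv 1$ by the strong maximum principle; but then $0=\Delta|v|^2=2|\nabla v|^2+2$, a contradiction.

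The second assertion follows from the bound just proved. For $s_1\geq s_0$, $|x|^2=\rho^2+2\rho\ve^{2/3}s_1+\ve^{4/3}|s|^2\geq\rho^2+2\rho\ve^{2/3}s_0$, so $|x|\geq\rho+\ve^{2/3}s_0+O(\ve^{4/3})$; hence $\rho-|x|\leq\ve^{2/3}\max(-s_0,0)+O(\ve^{4/3})$, which gives $\sqrt{\mu^+(x)}\leq\ve^{1/3}\sqrt{|\mu_1|\max(-s_0,0)}+o(\ve^{1/3})$. Dividing the pointwise bound by $\ve^{1/3}$ yields a constant depending only on $s_0$ on $[s_0,\infty)\times\R$. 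The main obstacle is the boundary-layer estimate of step~1 with the right quantitative dependence on $c=|\mu(x_n)|\to 0$, as this is what prevents the coefficient $\mu(x_n)/\lambda_n^2$ from diverging in the blow-up; once this is in place, the remaining analysis and the strong maximum principle close the argument cleanly.
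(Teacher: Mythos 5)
Your strategy (blow-up by contradiction at a maximizer of the ratio $|u_n|/(\sqrt{\mu^+}+\epsilon_n^{1/3})$) is genuinely different from the paper's, which instead builds an explicit barrier $\chi$ comparable to $\max(\mu,0)+\epsilon^{2/3}$, shows that $\psi=\tfrac12|u|^2-\chi-\kappa^2\epsilon^{2/3}$ is subharmonic on $\{\psi>0\}$, and concludes $\psi\le 0$ via Kato's inequality and the maximum principle for compactly supported subharmonic functions. Most of your steps are sound: the supremum is attained, the estimate $\ell_n/r_n^2\le 1/\sigma_n$ does give $|\tilde u_n|\le 1+o(1)$ on compacts, the deduction of the second assertion from \eqref{boundd} is correct, and so is the final subharmonicity contradiction for $\Delta v=|v|^2v$. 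The gap is exactly where you flagged it: Step 1 does not deliver $|\mu(x_n)|=o(\sigma_n^2\epsilon_n^{2/3})$. (When $\mu(x_n)\ge 0$ nothing is needed, since $\mu^+(x_n)\le r_n^2=\lambda_n^2/\sigma_n^2$ automatically; the issue is $\mu(x_n)=-c_n<0$, where $r_n=\epsilon_n^{1/3}$ and $\lambda_n=\sigma_n\epsilon_n^{1/3}$.) Your barrier yields at best $\lambda_n^2\le C(\epsilon_n/c_n)^2+M^2\exp(-\kappa\, c_n^{3/2}/\epsilon_n)$, the distance from $x_n$ to $\{\mu\ge -c_n/2\}$ being of order $c_n$ for $c_n$ small. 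If $c_n\ge\delta\lambda_n^2$, the algebraic term is indeed incompatible with $\sigma_n\to\infty$, but the exponential term only forces $c_n^{3/2}\lesssim\epsilon_n|\ln\epsilon_n|$, i.e.\ $c_n\lesssim(\epsilon_n|\ln\epsilon_n|)^{2/3}$. This is perfectly consistent with $c_n/\lambda_n^2\to\infty$ whenever $\sigma_n$ diverges more slowly than $|\ln\epsilon_n|^{1/3}$ (e.g.\ $\sigma_n\sim|\ln\epsilon_n|^{1/6}$ and $c_n\sim\epsilon_n^{2/3}|\ln\epsilon_n|^{1/2}$), so the coefficient $\lambda_n^{-2}\mu(x_n+\ell_n t)$ may still diverge and the compactness in your Step 2 breaks down.

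The argument can be repaired without Step 1 by splitting on the behavior of $\beta_n:=-\mu(x_n)/\lambda_n^2\ge -\sigma_n^{-2}$. If $\beta_n$ is bounded along a subsequence, the blow-up limit solves $\Delta v=\beta v+|v|^2v$ with $\beta\ge0$, and your contradiction survives: $\Delta|v|^2=2|\nabla v|^2+2\beta|v|^2+2|v|^4\ge0$, so $|v|\equiv1$ and then $0=2|\nabla v|^2+2\beta+2$, absurd. If $\beta_n\to\infty$, run the exponential decay directly at the blow-up scale: $h_n:=|\tilde u_n|^2$ satisfies $\Delta h_n\ge 2\beta_n'h_n-C\sigma_n^{-3}\sqrt{h_n}$ on $B_1(0)$ with $\beta_n'=\beta_n+O(\sigma_n^{-3})\to\infty$ and $h_n\le 2$, so by Kato's inequality and comparison $h_n(0)\le o(1)+Ce^{-\sqrt{\beta_n'}/C}\to0$, contradicting $h_n(0)=1$. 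With that substitution your proof closes; as written, it does not.
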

\begin{proof}
For the sake of simplicity we drop the indexes and write $u:=u_{\epsilon,a}$. Let us define the following constants
\begin{itemize}
\item $ M>0$ is the uniform bound of $|u_{\epsilon,a}|$ (cf. Lemma \ref{s3}),
\item $\lambda>0$ is such that $3 \mu_{\mathrm{rad}}(\rho-h)\leq 2\lambda h$, $\forall h \in [0,\rho]$,
\item $F:=\sup_{\R^2}| f|$,
\item $\kappa>0$ is such that $\kappa^3\geq 3aF$, and $\kappa^4\geq 6\lambda$.
\end{itemize}
Next, we construct the following comparison function
\begin{equation}\label{compchi}
\chi(x)=
\begin{cases}
\lambda\Big(\rho-|x|+\frac{\epsilon^{2/3}}{2}\Big)&\text{ for } |x|\leq\rho,\\
\frac{\lambda}{2\epsilon^{2/3}}(|x|-\rho-\epsilon^{2/3})^2&\text{ for }\rho\leq |x|\leq\rho+\epsilon^{2/3},\\
0&\text{ for } |x|\geq\rho+\epsilon^{2/3}.
\end{cases}
\end{equation}
One can check that $\chi\in C^1(\R^2\setminus\{0\})\cap H^1(\R^2)$ satisfies $\Delta \chi\leq \frac{2\lambda}{\epsilon^{2/3}}$ in $H^1(\R^2)$. Finally, we define the function
$\psi:=\frac{|u|^2}{2}-\chi-\kappa^2\epsilon^{2/3}$, and compute:
\begin{align}
\epsilon^2 \Delta \psi&=\epsilon^2 (|\nabla u|^2+ u\cdot\Delta u-\Delta \chi)\nonumber\\
&\geq-\mu |u|^2+|u|^4-\epsilon a f\cdot u-\epsilon^2 \Delta \chi \nonumber\\
&\geq-\mu |u|^2+|u|^4-\epsilon a F| u|-2\epsilon^{4/3}\lambda.
\end{align}
Now, one can see that when $x\in \omega:=\{x\in\R^2: \psi(x)> 0\}$, we have $\frac{|u|^4}{3}- \mu|u|^2\geq 0$, since
$$x \in\omega\cap \overline{D(0;\rho)}\Rightarrow \frac{|u|^4}{3}\geq \frac{2\lambda}{3}\Big(\rho-|x|+\frac{\epsilon^{2/3}}{2}\Big)|u|^2\geq \mu|u|^2 .$$
On the open set $\omega$, we also have: $\frac{|u|^4}{3}\geq \frac{\kappa^4}{3}\epsilon^{4/3}\geq 2\epsilon^{4/3}\lambda$, 
and $\frac{|u|^4}{3}\geq \frac{\kappa^3}{3}\epsilon|u|\geq \epsilon aF|u|$. Thus $\Delta \psi \geq 0$ on $\omega$ in the $H^1$ sense. 
To conclude, we apply Kato's inequality that gives: $\Delta \psi^+ \geq 0$ on $\R^2$ in the $H^1$ sense. Since $\psi^+$ is subharmonic with compact
support, we obtain by the maximum principle that $\psi^+\equiv 0$ or equivalently $\psi \leq 0$ on $\R^2$. The statement of the lemma follows by adjusting the constant $K$.
\end{proof}



\begin{lemma}\label{s3gg}
Assume that $a$ is bounded and let $u_{\epsilon,a}$ be solutions of \eqref{ode} uniformly bounded. Then, the maps $\frac{u_{\epsilon,a}}{\epsilon}$ and $\nabla u_{\epsilon,a}$ are uniformly bounded on the sets $\{x:\, |x|\geq \rho_1\}$ for every $\rho_1>\rho$. 
\end{lemma}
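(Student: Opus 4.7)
The plan is to exploit the coercivity of the linearized operator in the exterior region: since $\mu_{\mathrm{rad}}$ is strictly decreasing with $\mu_{\mathrm{rad}}(\rho)=0$, the potential $\mu(x)$ is bounded above by a strictly negative constant on $\{|x|\geq\rho_1\}$. Pick an auxiliary radius $\rho_0\in(\rho,\rho_1)$ and set $c_0:=-\mu_{\mathrm{rad}}(\rho_0)>0$, $F:=\|f\|_{L^\infty(\R^2)}$, and let $M$ denote the uniform bound on $|u_{\ve,a}|$ from the hypothesis. Writing $u=u_{\ve,a}$ and applying Kato's inequality to \eqref{ode} gives, on $\{|x|\geq\rho_0\}$ in the weak sense,
\begin{equation*}
\ve^2\Delta |u|\ \geq\ \frac{u}{|u|}\cdot(\ve^2\Delta u)\ \geq\ -\mu|u|+|u|^3-\ve aF\ \geq\ c_0|u|-\ve aF,
\end{equation*}
so that $|u|$ is a subsolution of $L_\ve w:=-\ve^2\Delta w+c_0 w=\ve aF$ there.

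Next I would introduce, on the annulus $A_R=\{\rho_0\leq|x|\leq R\}$, the two-sided exponential barrier
\begin{equation*}
\chi_R(x)\ :=\ \frac{\ve aF}{c_0}+M\bigl(e^{-\alpha(|x|-\rho_0)/\ve}+e^{-\alpha(R-|x|)/\ve}\bigr),\qquad \alpha:=\sqrt{c_0/3}.
\end{equation*}
A direct computation using the radial Laplacian yields
\begin{equation*}
L_\ve\chi_R\ =\ \ve aF+M(c_0-\alpha^2+\alpha\ve/|x|)e^{-\alpha(|x|-\rho_0)/\ve}+M(c_0-\alpha^2-\alpha\ve/|x|)e^{-\alpha(R-|x|)/\ve}\ \geq\ \ve aF,
\end{equation*}
the last inequality valid on $A_R$ for $\ve$ small since $\alpha\ve/|x|\leq\alpha\ve/\rho_0$ can be absorbed into $c_0-\alpha^2=2c_0/3$. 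Because $\chi_R\geq M\geq|u|$ at both $|x|=\rho_0$ and $|x|=R$, the maximum principle for the coercive operator $L_\ve$ on $A_R$ gives $|u|\leq\chi_R$. Sending $R\to\infty$ produces the uniform bound
\begin{equation*}
|u(x)|\ \leq\ \frac{\ve aF}{c_0}+Me^{-\alpha(|x|-\rho_0)/\ve}\qquad\text{for all }|x|\geq\rho_0.
\end{equation*}

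On $\{|x|\geq\rho_1\}$ the exponential factor satisfies $e^{-\alpha(\rho_1-\rho_0)/\ve}=o(\ve^k)$ for every $k>0$, whence $|u_{\ve,a}(x)|/\ve$ is bounded uniformly in $\ve$ and in bounded $a$, as claimed. For the gradient estimate, choose $\rho_1'\in(\rho_0,\rho_1)$; the same argument yields $|u|\leq C\ve$ on $\{|x|\geq\rho_1'\}$, and then \eqref{ode} gives $|\Delta u|\leq C/\ve$ there. The standard interior gradient estimate applied on balls of radius $\ve$,
\begin{equation*}
|\nabla u(x)|\ \leq\ \frac{C}{\ve}\|u\|_{L^\infty(B(x,\ve))}+C\ve\|\Delta u\|_{L^\infty(B(x,\ve))},
\end{equation*}
then gives $|\nabla u|\leq C$ uniformly on $\{|x|\geq\rho_1\}$, since $B(x,\ve)\subset\{|y|\geq\rho_1'\}$ for $\ve$ small enough.

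The main subtlety, or expected obstacle, is the boundary-layer character of the estimate: the only a priori control on $|u|$ at the inner boundary $|x|=\rho_0$ is the trivial $O(1)$ bound $M$, and it has to be reduced to $O(\ve)$ across a fixed-width annulus $\rho_0\leq|x|\leq\rho_1$. An exponential barrier with rate $\alpha/\ve$ is tailored exactly to this scale, and introducing the auxiliary radius $\rho_0$ strictly smaller than $\rho_1$ is essential; applying the same barrier directly on $\{|x|\geq\rho_1\}$ would leave the $M$-contribution unimproved and recover only $|u|\leq C$ rather than $|u|\leq C\ve$. The two-sided form of $\chi_R$ and the passage $R\to\infty$ are what allow one to conclude without assuming a priori that $u$ vanishes at infinity.
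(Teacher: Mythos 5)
Your proof is correct and follows essentially the same strategy as the paper's: exploit that $-\mu\geq c_0>0$ on a slightly larger exterior set, show via Kato's inequality that (a function of) $|u|$ is a subsolution of a coercive operator, and conclude by an exponential comparison argument that $|u|=O(\epsilon)$ on $\{|x|\geq\rho_1\}$, with the gradient bound then following from the equation and interpolation at scale $\epsilon$. The only cosmetic difference is that the paper runs the comparison on the smooth quantity $\tfrac12(|u|^2-k^2\epsilon^2)$ rather than on $|u|$ itself, and leaves the barrier construction implicit as a ``standard comparison argument,'' whereas you write it out explicitly (including the two-sided barrier and the limit $R\to\infty$, which correctly handles the absence of a decay assumption at infinity).
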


\begin{proof}
We consider the sets $S:= \{ x:\  |x|\geq\rho_1\}\subset S':= \{ x:\  |x|>\rho'_1\}$, with $\rho<\rho'_1<\rho_1$, and define the constants:
\begin{itemize}
\item $ M>0$ which is the uniform bound of $|u_{\epsilon,a}|$,
\item $ \mu_0=-\mu_{\mathrm{rad}}(\rho'_1)>0$, 
\item $ f_\infty=\|f\|_{L^\infty}$,
\item $a^*:=\sup a(\epsilon)$,
\item $k=\frac{2a_*f_\infty}{\mu_0}>0$.
\end{itemize}
Next we introduce the function $\psi(x)=\frac{1}{2}(|u|^2-k^2\epsilon^2)$ satisfying:
\begin{align*}
\epsilon^2 \Delta \psi=\epsilon^2 \Delta\frac{|u|^2}{2}&\geq |u|^4+\mu_0| u|^2-\epsilon a_*f_\infty |u|  \ , \forall x\in S',\\
&\geq \mu_0 \psi , \ \forall x \in S' \text{ such that }  \psi(x)\geq 0.
\end{align*}
By Kato's inequality we have $\epsilon^2\Delta \psi^+ \geq  \mu_0\psi^+$ on $S'$, in the $H^1$ sense, and utilizing a standard 
comparison argument, we deduce that $\psi^+(x)\leq M^2 e^{-\frac{c}{\epsilon}d(x,\partial S')}$, $\forall x \in S$, and $
\forall \epsilon\ll 1$, where $d$ stands for the Euclidean distance, and $c>0$ is a constant. It is clear that 
$$d(x,\partial S')>-\frac{\epsilon}{c}\ln\Big(\frac{k^2
\epsilon^2}{2 M^2}\Big)\Rightarrow M^2e^{-\frac{c}{\epsilon}d(x,\partial S')}<\frac{k^2\epsilon^2}{2}\Rightarrow |u|^2<2k^2\epsilon^2.$$
Therefore, there exists $\epsilon_0$ such that 
\begin{equation}\label{asd1}
\frac{|u_{\epsilon,a}(x)|}{\epsilon}\leq \sqrt{2} k,\ \forall \epsilon<\epsilon_0,\ \forall x\in S.
\end{equation}
The boundedness of $\nabla u_{\epsilon,a}$ follows from \eqref{ode} and the uniform bound \eqref{asd1}.
\end{proof}

\section{Proof of Theorems \ref{theorem 1}, \ref{theoremz} and \ref{th1n}}\label{proofs}

\begin{proof}[Proof of Theorem \ref{theorem 1} (i)] 


Suppose by contradiction that $|v|$ does not converge uniformly to $\sqrt{\mu}$ on a closed set $F\subset \Omega$. Then there exist a sequence $\epsilon_n\to 0$
and a sequence $\{x_n\}\subset F$ such that
\begin{equation}\label{either}
\text{either $|v_{\epsilon_n}(x_n)|\geq \sqrt{\mu(x_n)}+\delta$ or $|v_{\epsilon_n}(x_n)|\leq \sqrt{\mu(x_n)}-\delta$, for some $\delta>0$}.
\end{equation}
In addition, we may assume that up to a subsequence $\lim_{n\to\infty}x_n=x_0\in F$.
Next, we consider the rescaled maps
$\tilde v_n(s)=v_{\epsilon_n}(x_n+\epsilon_n s)$ that satisfy
\begin{equation}\label{asd2ccee}
\Delta \tilde v(s)+\mu( x_n+\epsilon_n s)\tilde v(s)-|\tilde v(s)|^2\tilde v(s)+\epsilon_n af( x_n+\epsilon_n s)=0 , \ \forall s \in \R^2.
\end{equation}
In view of the Lemma \ref{s3} and \eqref{asd2ccee}, $\tilde v_{n}$ and its first derivatives are uniformly bounded for $\epsilon \ll 1$. 
Moreover, by differentiating \eqref{asd2ccee}, one also obtains the boundedness of the second derivatives of $\tilde v_n$ on compact sets.
Thus, we can apply the theorem of Ascoli via a diagonal argument, and show that for a subsequence still called $\tilde v_n$,
$\tilde v_n$ converges in $C^2_{\mathrm{ loc}}(\R^2,\R^2)$ to a map $\tilde V$, that we are now going to determine.
For this purpose, we introduce the rescaled energy
\begin{equation}
\label{functresee}
\tilde E(\tilde u)=\int_{\R^2}\Big(\frac{1}{2}|\nabla \tilde u(s)|^2-\frac{1}{2}\mu( x_n+\epsilon_n s)|\tilde u(s)|^2+\frac{1}{4}|\tilde u(s)|^4-\epsilon_n a f( x_n+\epsilon_n s)\cdot\tilde u(s)\Big)\dd s=E(u),\nonumber
\end{equation}
where we have set $\tilde u(s)=u_{\epsilon_n}( x_n+\epsilon_ns)$ i.e. $u_{\epsilon_n}(x)=\tilde u\big(\frac{x-x_n}{\epsilon_n}\big)$.
Let $\tilde \xi$ be a test function with support in the compact set $K$. We have $\tilde E(\tilde v_n+\tilde \xi,K)\geq \tilde E(\tilde v_n,K)$, and at the limit
$G_{0}( \tilde V+\tilde\xi,K)\geq  G_{0}(\tilde V,K)$, where $$ G_{0}(\psi,K)=\int_{K}\left[\frac{1}{2}|\nabla\psi|^2-\frac{1}{2}\mu(x_0)|\psi|^2+\frac{1}{4}|\psi |^4\right],$$
or equivalently $G( \tilde V+\tilde\xi,K)\geq G(\tilde V,K)$, where
\begin{equation}\label{glee}
G(\psi,K)=\int_{K}\left[\frac{1}{2}|\nabla\psi|^2-\frac{1}{2}\mu(x_0)|\psi|^2+\frac{1}{4}|\psi|^4+\frac{(\mu(x_0))^2}{4}\right]
=\int_{K}\left[\frac{1}{2}|\nabla\psi|^2+\frac{1}{4}(|\psi|^2-\mu(x_0))^2\right].
\end{equation}
Thus, we deduce that $\tilde V$ is a bounded minimal solution of the P.D.E. associated to the functional \eqref{glee}:
\begin{equation}\label{odegl}
\Delta \tilde V(s)+(\mu(x_0)-|\tilde V(s)|^2)\tilde V(s)=0.
\end{equation}
If $\tilde V$ is a constant of modulus $\sqrt{\mu(x_0)}$, then we have $\lim_{n\to\infty}|v_{\epsilon_n}(x_n)|=\sqrt{\mu(x_0)}$ which is excluded by \eqref{either}.
Therefore we obtain (up to orthogonal transformation in the range) $\tilde V(s)=\sqrt{\mu(x_0)}\,\eta(\sqrt{\mu(x_0)}(s-s_0))$, where 
$\eta(s)=\eta_{\mathrm{rad}}(|s|)\frac{s}{|s|}$ is the radial solution to the Ginzburg-Landau equation \eqref{likegl}, and $s_0\in \R^2$. In particular,
the degree of $\tilde V$ on $\partial D(0; 2|s_0|)$ is $\pm 1$, 
and by the $C^1_{\mathrm{ loc}}(\R^2,\R^2)$ convergence, we deduce that for $\epsilon_n\ll 1$ the degree of $\tilde v_n $ is still $\pm 1$ on
$\partial D(0; 2|s_0|)$. This implies that $v_n $ has a zero in $D(x_n; 2\epsilon_n|s_0|)$ for $\epsilon_n\ll 1 $, which contradicts the fact that $v_\epsilon\neq 0$ on $\Omega$ for $\epsilon\ll 1$.
\end{proof}
\begin{proof}[Proof Theorem \ref{theorem 1} (ii)] 
For every $\xi=\rho e^{i\theta}$ we consider the local coordinates $s=(s_1,s_2)$ in the basis $(e^{i\theta},i e^{i\theta})$,
and we rescale the global minimizers $v$ as in Lemma \ref{l2} by setting $\tilde v_{\epsilon,a}(s)=\frac{v_{\epsilon,a}(\xi+ s\epsilon^{2/3})}{\epsilon^{1/3}}$.
Clearly $\Delta v(s)=\epsilon \Delta v(\xi+s\epsilon^{2/3})$, thus,
\begin{equation}\label{oderes1}
\Delta \tilde v(s)+\frac{\mu(\xi+s\epsilon^{2/3})}{\epsilon^{2/3}} \tilde v(s)-|\tilde v^2(s)|\tilde v(s)+ a f(\xi+s\epsilon^{2/3})=0, \qquad \forall s\in \R^2.\nonumber
\end{equation}
Writing $\mu(\xi+h)=\mu_1 h_1+h\cdot A(h)$, with $\mu_1:=\mu'_{\mathrm{rad}}(\rho)<0$, $A \in C^\infty(\R^2,\R^2)$, and $A(0)=0$, we obtain
\begin{equation}\label{oderes2}
\Delta \tilde v(s)+(\mu_1 s_1 + A(s \epsilon^{2/3})\cdot s) \tilde v(s)-|\tilde v^2(s)|\tilde v(s)+ a f(\xi+s\epsilon^{2/3})=0,\qquad  \forall s\in \R^2.
\end{equation}
Next, we define the rescaled energy by
\begin{equation}
\label{functres2}
\tilde E(\tilde u)=\int_{\R}\Big(\frac{1}{2}|\nabla\tilde u(s)|^2-\frac{\mu(\xi+s \epsilon^{2/3})}{2\epsilon^{2/3}}\tilde u^2(s)+\frac{1}{4}|\tilde u|^4(s)- a f(\xi+s \epsilon^{2/3})\cdot \tilde u(s)\Big)\dd s.
\end{equation}
With this definition $\tilde E(\tilde u)=\frac{1}{\epsilon^{2/3}}E(u)$.
From Lemma \ref{l2} and \eqref{oderes2}, it follows that $\Delta \tilde v$, and also $\nabla\tilde v$, are uniformly bounded on compact sets. Moreover, by differentiating \eqref{oderes2} we also obtain the boundedness of the second derivatives of $\tilde v$.
Thanks to these uniform bounds, we can apply the theorem of Ascoli via a diagonal argument to obtain the convergence of $\tilde v$ in $C^2_{\mathrm{ loc}}(\R^2,\R^2)$ (up to a subsequence) 
 to a minimal solution (cf. \eqref{minnn}) $\tilde V $ of the P.D.E. 
\begin{equation}\label{oderes4}
\Delta \tilde V(s)+\mu_1 s_1 \tilde V(s)-|\tilde V|^2(s)\tilde V(s)+ a_0 f(\xi)=0, \ \forall s\in \R^2, \text{ with } a_0:=\lim_{\epsilon\to 0}a(\epsilon),
\end{equation}
which is associated to the functional
\begin{equation}
\label{functres4}
\tilde E_0(\phi,J)=\int_{J}\Big(\frac{1}{2}|\nabla\phi(s)|^2-\frac{\mu_1}{2} s_1 |\phi|^2(s)+\frac{1}{4}|\phi|^4(s)- a_0 f(\xi)\cdot\phi(s) \Big)\dd s.
\end{equation}
Setting $y(s):=\frac{1}{\sqrt{2}(-\mu_1)^{1/3}}\tilde V\big(\frac{s}{(-\mu_1)^{1/3}}\big)$, \eqref{oderes4} reduces to \eqref{pain} with 
$\alpha=\frac{a_0f(\xi)}{\sqrt{2}\mu_1}$, and $y$ is still a minimal solution of \eqref{pain}. In addition, by Lemma \ref{boundd}, $\tilde V$ and $y$ are bounded in the half-planes $[s_0,\infty)\times\R$, $\forall s_0\in\R$. 
\end{proof}
\begin{proof}[Theorem \ref{theorem 1} (iii)] 
For every $x_0=r_0 e^{i\theta_0}$ fixed, with $r_0>\rho$, we consider the local coordinates $s=(s_1,s_2)$ in the basis $(e^{i\theta_0},i e^{i\theta_0})$, and the rescaled maps
$\tilde v_{\epsilon,a}(s)= \frac{v_{\epsilon,a}(x_0+\epsilon s)}{\epsilon}$, satisfying 
\begin{equation}\label{asd2}
\Delta \tilde v(s)+\mu(x_0+\epsilon s)\tilde v(s)-\epsilon^2|\tilde v(s)|^2\tilde v(s)+af(x_0+\epsilon s)=0 , \ \forall s \in \R^2.
\end{equation}
In view of the bound \eqref{asd1} provided by Lemma \ref{s3gg} and \eqref{asd2}, we can see that the first derivatives of $\tilde v_{\epsilon,a}$ are uniformly bounded on compact sets for $\epsilon \ll 1$. 
Moreover, by differentiating \eqref{asd2}, one can also obtain the boundedness of the second derivatives of $\tilde v$ on compact sets.
As a consequence, we conclude that $\lim_{\epsilon\to 0, a\to a_0}\tilde v_{\epsilon,a}(s)=\tilde V(s)$ in $C^2_{\mathrm{loc}}$, where $\tilde V(s)\equiv-\frac{a_0}{\mu_{\mathrm{rad}}(r_0)}f(r_0 e^{i\theta_0})$ is the unique bounded solution of
\begin{equation}\label{asd3}
\Delta \tilde V(s)+\mu(x_0)\tilde V(s)+a_0f(x_0)=0 , \ \forall s \in \R^2.
\end{equation}
Indeed, consider a smooth and bounded solution $u:\R^2\to\R^2$ of $\Delta u=\nabla W(u)$ where the potential $W:\R^2\to \R$ is smooth and strictly convex. Then, we have $\Delta(W(u))=|\nabla W(u)|^2+\sum_{i=1}^2D^2W(u)(u_{x_i},u_{x_i})\geq 0$, and since $W(u)$ is bounded we deduce that $W(u)$ is constant. Therefore, $u\equiv u_0$ where $u_0\in \R^2$ is such that $\nabla W(u_0)=0$. 
Finally, the uniform convergence $\lim_{\epsilon\to 0, a\to a_0}\frac{u_{\epsilon,a}((r_0+t\epsilon)e^{i\theta})}{\epsilon}=-\frac{a_0}{\mu_{\mathrm{rad}}(r_0)}f(r_0 e^{i\theta})$, when $t$ remains bounded and $\theta\in\R$, follows from the invariance of equation \eqref{ode} under the transformations $u(x)\mapsto g^{-1}u(gx)$, $\forall g \in SO(2)$.
\end{proof}

\begin{proof}[Theorem \ref{theoremz} (i)] 
The proof follows from the next lemma which applies in the more general case of uniformly bounded solutions:
\begin{lemma}\label{s3g}
Consider the annulus $A=\{\rho+\tau \epsilon^{\gamma}\leq |x|\leq \rho_1\}$ with $\rho_1>\rho$, $\tau>0$ and $\gamma\in [0,2/3)$ fixed. 
Assume that $a(\epsilon)>0$ and $\lim_{\epsilon\to 0}\epsilon^{1-\frac{3\gamma}{2}}\ln( a)=0$, and 
let $u_{\epsilon,a}$ be solutions of \eqref{ode} uniformly bounded. Then, there exists $\epsilon_0$ such that 
$u_{\epsilon,a}(x)\neq 0$, $\forall x \in A$, $\forall \epsilon<\epsilon_0$. In addition,
\begin{itemize}
\item $\lim_{\epsilon\to 0}\frac{u_{\epsilon,a}(x)}{|u_{\epsilon,a}(x)|}=\frac{x}{|x|}$, uniformly on $A$, 
\item when $\epsilon \ll 1$, the solution $u_{\epsilon,a}$ has at least one zero $\bar x_\epsilon$ in the open disc $|x|<\rho+\tau\epsilon^\gamma$.
\end{itemize}
\end{lemma}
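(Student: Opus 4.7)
The strategy is to show that on the annulus $A$ the solution $u_{\epsilon,a}$ is well-approximated by the explicit ansatz
\[
\bar u(x):=-\frac{\epsilon a f(x)}{\mu(x)}=\frac{\epsilon a f_{\mathrm{rad}}(|x|)}{-\mu_{\mathrm{rad}}(|x|)}\cdot\frac{x}{|x|},
\]
which by \eqref{hyp2} is aligned with $x/|x|$ on $A$; statements (i)--(ii) will follow from the closeness $u\approx\bar u$, and (iii) from a degree argument inside the inner circle of $A$. First, since $\mu_{\mathrm{rad}}(\rho)=0$ and $\mu_{\mathrm{rad}}'(\rho)<0$, one has the uniform lower bound $-\mu(x)\geq c_0\epsilon^\gamma$ on $A$ for some $c_0>0$ and $\epsilon$ small; hence $\bar u$ is smooth and nowhere-zero on $A$, points in the direction $x/|x|$, and satisfies $|\bar u|\sim\epsilon^{1-\gamma}a$ near the inner boundary and $|\bar u|\sim\epsilon a$ near the outer one.

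Setting $w:=u_{\epsilon,a}-\bar u$, equation \eqref{ode} gives $\epsilon^2\Delta w+\mu w=|u|^2u-\epsilon^2\Delta\bar u$, and Kato's inequality yields
\[
\epsilon^2\Delta|w|\geq|\mu|\,|w|-R,\qquad R:=|u|^3+\epsilon^2|\Delta\bar u|.
\]
The heart of the argument is to construct a super-solution of this inequality on $A$ of the form
\[
\phi(x)=\frac{R(x)}{|\mu(x)|}+K_{\mathrm{in}}\exp\!\Big(-\lambda\,\frac{|x|-\rho-\tau\epsilon^\gamma}{\epsilon^{1-\gamma/2}}\Big)+K_{\mathrm{out}}\,e^{-\lambda'(\rho_1-|x|)/\epsilon},
\]
where the two exponential boundary layers have widths $\epsilon^{1-\gamma/2}$ and $\epsilon$, matching $\sqrt{|\mu|}/\epsilon$ on the two components of $\partial A$. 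The constants $K_{\mathrm{in}}, K_{\mathrm{out}}$ are chosen to dominate the boundary values of $|w|$: Lemma \ref{l2} gives $|u|\le K((\epsilon a)^{1/3}+\sqrt{\mu^+})$, hence $|w|\le C((\epsilon a)^{1/3}+\epsilon^{1-\gamma}a)$ on the inner boundary, while Lemma \ref{s3gg} gives $|w|\le C\epsilon a$ on the outer boundary. A direct computation verifies that $\phi$ is a super-solution for $\epsilon$ small, so Kato's inequality combined with the maximum principle yields $|w|\le\phi$ on $A$.

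Away from the two boundary layers, only the $R/|\mu|$ term contributes. Using $|u|^3\lesssim\epsilon a$ and the explicit estimate $\epsilon^2|\Delta\bar u|\lesssim a\,\epsilon^{3-3\gamma}$, one checks that $R/|\mu|=o(|\bar u|)$ precisely under the hypothesis $\epsilon^{1-3\gamma/2}\ln a\to 0$: this condition is exactly what is needed for the inner layer amplitude to decay to $o(|\bar u|)$ after a layer of thickness $\epsilon^{1-\gamma/2}$ at rate $\epsilon^{\gamma/2-1}$. It follows that $u_{\epsilon,a}\neq 0$ on $A$ and $u_{\epsilon,a}/|u_{\epsilon,a}|\to x/|x|$ uniformly on $A$, proving (i)--(ii). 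For (iii), by (i)--(ii) the map $u_{\epsilon,a}/|u_{\epsilon,a}|\colon\{|x|=\rho+\tau\epsilon^\gamma\}\to S^1$ is continuous and uniformly close to $x/|x|$, hence has topological degree $+1$ for $\epsilon$ small. If $u_{\epsilon,a}$ were non-vanishing on the closed disc $\bar D=\{|x|\leq\rho+\tau\epsilon^\gamma\}$, then $u_{\epsilon,a}/|u_{\epsilon,a}|\colon\bar D\to S^1$ would be continuous and its restriction on $\partial D$ null-homotopic, contradicting the degree computation; therefore $u_{\epsilon,a}$ has at least one zero in the open disc $|x|<\rho+\tau\epsilon^\gamma$.

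The principal difficulty is the quantitative control of $w$ near the inner boundary, where $|\mu|$ is only of order $\epsilon^\gamma$ and the inner boundary data $(\epsilon a)^{1/3}$ coming from Lemma \ref{l2} can be of the same order as $|\bar u|\sim\epsilon^{1-\gamma}a$; the precise form of the hypothesis $\epsilon^{1-3\gamma/2}\ln a\to 0$ is tailored exactly to guarantee that this boundary amplitude is defeated by the exponential decay of the barrier across the thin layer, yielding the required bulk smallness $|w|=o(|\bar u|)$.
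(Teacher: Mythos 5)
Your overall architecture (a comparison/barrier argument via Kato's inequality, with an exponential layer of width $\epsilon^{1-\gamma/2}$ whose defeat of the boundary data is exactly the hypothesis $\epsilon^{1-3\gamma/2}\ln a\to 0$, followed by a degree argument) is in the same spirit as the paper's, which works instead with the scalar projections $u_\nu=-(u\cdot\nu)$ on overlapping sectors $S_\delta\subset S'_\delta$ and shows $u\cdot\nu>0$ for all $\nu$ within angle $\frac{\pi}{2}-\delta$ of $x/|x|$. However, your bulk estimate contains a genuine gap centered on the cubic term. You misquote Lemma \ref{l2}: it gives $|u|\le K(\sqrt{\mu^+}+\epsilon^{1/3})$, not $K(\sqrt{\mu^+}+(\epsilon a)^{1/3})$, so on $A$ (where $\mu\le 0$) the only available bound is $|u|^3\lesssim\epsilon$, and then $R/|\mu|\gtrsim\epsilon^{1-\gamma}$ near the inner boundary, which is \emph{not} $o(|\bar u|)=o(a\,\epsilon^{1-\gamma})$ when $a$ is bounded or small (the hypothesis allows, e.g., $a=\epsilon^{10}$). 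Worse, even granting your claimed $|u|^3\lesssim\epsilon a$, you would only get $R/|\mu|=O(\epsilon a/|\mu|)=O(|\bar u|)$ with a constant that is not small, never $o(|\bar u|)$. The term $|u|^2u$ cannot be discarded as a source term by taking absolute values; its sign structure must be used. The paper exploits this by noting that $(|u|^2-\mu)u_\nu$ has a favorable sign on the set where $u_\nu+k_\epsilon\epsilon>0$; in your framework one would need to write $\frac{w}{|w|}\cdot(|u|^2u)\ge |u|^2(|w|-|\bar u|)$, keep $|u|^2|w|\ge 0$ on the good side, and observe that only $|u|^2|\bar u|/|\mu|\lesssim\epsilon^{2/3-\gamma}|\bar u|=o(|\bar u|)$ (using $\gamma<2/3$) enters the source. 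Without one of these devices the bulk smallness $|w|=o(|\bar u|)$ is simply not established.

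A second, more minor but still real, defect: your barrier's inner exponential is anchored at $|x|=\rho+\tau\epsilon^\gamma$, the inner boundary of $A$ itself, so the argument yields nothing on a sublayer of width $\sim\epsilon^{1-\gamma/2}(|\ln\epsilon|+|\ln a|)$ adjacent to that circle --- which is precisely the circle on which you then compute the degree and the set on which the lemma asserts $u\ne 0$. You must run the estimate on an enlarged region (inner radius $\rho+\frac{\tau}{2}\epsilon^\gamma$, say) and conclude on $A$, whose points lie at distance $\ge\frac{\tau}{2}\epsilon^\gamma\gg\epsilon^{1-\gamma/2}(|\ln\epsilon|+|\ln a|)$ from the enlarged inner boundary; this is exactly the role of the pair $S_\delta\subset S'_\delta$ in the paper. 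With these two repairs your route would work, but as written the proof does not close.
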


\begin{proof}
We examine the sign of the projections $u_\nu(x)=- (u(x)\cdot \nu)$, where $\nu=(\cos\theta_0,\sin\theta_0)$ is a unit vector. 
Consider for every $\delta>0$ the set
$$S_\delta:=\left \{ x=re^{i\theta}:\ \rho+ \tau\epsilon^\gamma\leq r\leq\rho_1, \ -\frac{\pi}{2}+\delta+\theta_0\leq\theta\leq\frac{\pi}{2}-\delta+\theta_0\right\},$$
which is contained in the domain
$$S'_\delta:=\left \{ x=re^{i\theta}:\ \rho+\frac{\tau}{2}\epsilon^{\gamma}< r<\rho_2, \ -\frac{\pi}2+\frac{\delta}{2}+\theta_0<\theta<\frac{\pi}2-\frac{\delta}{2}+\theta_0\right\},$$
with $\rho_1<\rho_2$.
In view of \eqref{hyp2}, let $0<f_\delta:=\cos(\frac{\pi-\delta}{2})\min_{r\in [\rho,\rho_2]}f_{\mathrm{rad}}(r)\leq\inf_{x\in S'_\delta} (f(x)\cdot\nu)$, 
and notice that
$$\epsilon^2 \Delta u_\nu\geq (|u|^2-\mu) u_\nu+\epsilon a f_\delta, \ \forall x\in S'_\delta.$$
Next, we define:
\begin{itemize}
\item $ M>0$ which is the uniform bound of $|u_{\epsilon,a}|$,
\item $ \mu_0>0$ such that $2\mu_0 h\leq -\mu_{\mathrm{rad}}(\rho+h)$, for $h\in[0,1]$, 
\item $ \mu_\infty=\sup_{\R^2}(-\mu)>0$,
\item $k_\epsilon=\frac{af_\delta}{M^2+\mu_\infty}>0$,
\end{itemize}
and the function $\psi(x)=u_\nu+k_\epsilon\epsilon$. One can check that when $x\in \omega:=\{x\in S'_\delta: \psi(x)> 0\}$, we have
$\epsilon^2\Delta \psi \geq\tau\epsilon^{\gamma} \mu_0\psi$ on $\omega$. To extend the previous inequality to the domain $S'_\delta$, 
we apply Kato's inequality that gives: $\epsilon^{2-\gamma}\Delta \psi^+ \geq  \tau\mu_0\psi^+$ on $S'_\delta$, in the $H^1$ sense. 
Now, since in $\gamma\in[0,2/3)$, we can see that $\forall x \in S_\delta$: $d(x,S'_\delta)\geq \kappa \epsilon^\gamma\gg \epsilon^{1-\frac{\gamma}{2}}$ for some constant $\kappa>0$, where $d$ stands for the Euclidean distance,
and utilizing a standard 
comparison argument, we deduce that $\psi^+(x)\leq (M +k_\epsilon \epsilon)e^{-\frac{c}{\epsilon^{1-\frac{\gamma}{2}}}d(x,\partial S'_\delta)}$, $\forall x \in S_\delta$, 
$\forall \epsilon\ll 1$, where $c>0$ is a constant. 
Finally, in view of $\lim_{\epsilon\to 0}\epsilon^{1-\frac{3\gamma}{2}}\ln( a)=0$ and $\gamma\in[0,2/3)$, there exists $\epsilon_\delta$ (independent of $\theta_0$) such that 
\begin{equation}\label{qss}
\forall\epsilon<\epsilon_\delta, \ \forall x\in S_\delta: \ d(x,\partial S'_\delta)>-\frac{\epsilon^{1-\frac{\gamma}{2}}}{c}\ln\Big(\frac{k_\epsilon\epsilon}{ M+k_\epsilon\epsilon}\Big)\Rightarrow \psi^+(x)\leq (M+k_\epsilon\epsilon) e^{-\frac{c}{\epsilon^{1-\frac{\gamma}{2}}}d(x,\partial S'_\delta)}<k_\epsilon\epsilon.\end{equation}
From this it follows  $u_\nu(x)<0$ hence $u(x)\cdot\nu>0$

To conclude, we notice that every $x=re^{i\theta}\in A$ belongs to the intersection of the sets $S_\delta$ corresponding to the angles $\theta_0\in [\theta-\frac{\pi}{2}+\delta,\theta+\frac{\pi}{2}-\delta]$. 
As a consequence, $\forall x=re^{i\theta}\in A$, $\forall\epsilon<\epsilon_\delta$, $\forall \theta_0\in [\theta-\frac{\pi}{2}+\delta,\theta+\frac{\pi}{2}-\delta]$, we have $u(x)\cdot (\cos\theta_0,\sin\theta_0)>0$,
and in particular
\begin{itemize}
\item $u(x)\neq 0$,  
\item $u(x)=|u(x)|e^{i(\phi+\theta)}$, with $\phi\in (-\delta,\delta)$.  
\end{itemize}
Since for every $\delta>0$ arbitrary small, we can find an $\epsilon_\delta>0$ such that $\Big|\frac{u(x)}{|u(x)|}-\frac{x}{|x|}\Big|\leq|e^{i\phi}-1|$ holds 
$\forall x\in A$, $\forall \epsilon<\epsilon_\delta$, with $\phi\in (-\delta,\delta)$, it follows that
$\lim_{\epsilon\to 0}\frac{u_{\epsilon,a}(x)}{|u_{\epsilon,a}(x)|}=\frac{x}{|x|}$, uniformly on $A$.
In addition, for $\epsilon<\epsilon_\delta$ (with $\delta$ small), the winding number of $u$ on the circle $|x|=\rho+\tau\epsilon^\gamma$ is one. 
Thus by degree theory, the solution $u$ has at least one zero in the open disc $|x|<\rho+\tau\epsilon^\gamma$.
\end{proof}
\end{proof}
\begin{proof}[Theorem \ref{theoremz} (ii)] 
The minimum of the energy defined in \eqref{funct 0} is nonpositive and tends to $-\infty$ as $\epsilon \to 0$. Since we are interested in the behavior of the minimizers as $\epsilon \to 0$, it is useful to define a renormalized energy, which is obtained by adding to \eqref{funct 0} a suitable term so that the result is tightly bounded from above. We define the renormalized energy as 
\begin{equation}\label{renorm}
\mathcal{E}(u):=E(u)+\int_{|x|<\rho}\frac{\mu^2}{4\epsilon^2}=\int_{\R^2}\frac{1}{2}|\nabla u|^2+\int_{|x|<\rho}
\frac{(|u|^2-\mu)^2}{4\epsilon^2}+\int_{|x|>\rho}\frac{|u|^2(|u|^2-2\mu)}{4\epsilon^2}- \frac{a}{\epsilon} \int_{\R^2}f\cdot u ,
\end{equation}  
and claim the bound:
\begin{lemma}\label{bbnm}
\begin{equation}\label{quaq1}
\mathcal{E}(v_{\epsilon,a})\leq 
\frac{\pi|\mu_1|\rho}{6}|\ln\epsilon|+\mathcal O(1) \text{ for $\epsilon\ll 1$ and arbitrary $a$},
\end{equation}
where $\mu_1=\mu_{\mathrm{rad}}'(\rho)$.
\end{lemma}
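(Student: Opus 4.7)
The plan is to exploit the minimizing property of $v_{\epsilon,a}$ by testing against a carefully chosen competitor $u_\ast\in H^1(\R^2,\R^2)$. Since the difference $\mathcal{E}(u)-E(u)=\int_{|x|<\rho}\mu^2/(4\epsilon^2)$ is independent of $u$, any minimizer of $E$ also minimizes $\mathcal{E}$, so $\mathcal{E}(v_{\epsilon,a})\leq \mathcal{E}(u_\ast)$ for every competitor. The goal is to pick $u_\ast$ approximating the Thomas-Fermi profile $\sqrt{\mu^+(|x|)}$ closely enough that both potential terms of $\mathcal{E}$ are $O(1)$, and moreover to arrange that the forcing contribution $-\tfrac{a}{\epsilon}\int f\cdot u_\ast$ vanishes identically. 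This last property is what produces a bound uniform in $a$.

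My concrete choice is $u_\ast(x)=\phi_\epsilon(|x|)\,e_1$ with $e_1=(1,0)$, where $\phi_\epsilon:[0,\infty)\to[0,\infty)$ is defined by $\phi_\epsilon(r)=\sqrt{\mu_{\mathrm{rad}}(r)}$ on $[0,\rho-\epsilon^{2/3}]$, linear interpolation from $\sqrt{\mu_{\mathrm{rad}}(\rho-\epsilon^{2/3})}$ to $0$ on $[\rho-\epsilon^{2/3},\rho+\epsilon^{2/3}]$, and $\phi_\epsilon\equiv 0$ for $r\geq\rho+\epsilon^{2/3}$. The crucial feature of this scalar-like vector form (as opposed to the vortex profile $\phi_\epsilon(|x|)x/|x|$) is that since $f(x)=f_{\mathrm{rad}}(|x|)x/|x|$,
\[
\int_{\R^2} f\cdot u_\ast\,\dd x=\int_0^\infty \phi_\epsilon(r)f_{\mathrm{rad}}(r)r\,\dd r\int_0^{2\pi}\cos\theta\,\dd\theta=0,
\]
independently of $a$. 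The cutoff length $\epsilon^{2/3}$ is the Painlev\'e scale of Theorem \ref{theorem 1} (ii) and is precisely what produces the correct logarithmic constant.

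What remains is routine bookkeeping for $\mathcal{E}(u_\ast)$. Since $|\nabla u_\ast|^2=\phi_\epsilon'(|x|)^2$ and $|u_\ast|^2=\phi_\epsilon(|x|)^2$, both potential terms vanish on $|x|\leq\rho-\epsilon^{2/3}$ (where $|u_\ast|^2=\mu$) and on $|x|\geq\rho+\epsilon^{2/3}$ (where $u_\ast=0$). Using $\mu(r)=|\mu_1|(\rho-r)+O((\rho-r)^2)$ near $r=\rho$, on the transition annulus $T=\{\rho-\epsilon^{2/3}\leq|x|\leq\rho+\epsilon^{2/3}\}$ one has $|\phi_\epsilon|=O(\epsilon^{1/3})$ and $|\phi_\epsilon'|=O(\epsilon^{-1/3})$, hence each of the two potential terms restricted to $T$ and the gradient term restricted to $T$ is $O(\epsilon^{-2/3})\cdot|T|=O(1)$. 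The only unbounded term is the inner gradient integral
\[
\int_0^{\rho-\epsilon^{2/3}}\pi r\,\frac{(\mu'(r))^2}{4\mu(r)}\,\dd r,
\]
which, after expanding $(\mu')^2/(4\mu)=|\mu_1|/(4(\rho-r))+O(1)$ as $r\uparrow\rho$, equals $\tfrac{\pi\rho|\mu_1|}{4}\ln(\rho/\epsilon^{2/3})+O(1)=\tfrac{\pi|\mu_1|\rho}{6}|\ln\epsilon|+O(1)$. Adding all contributions gives the claimed estimate.

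There is no serious obstacle; the two delicate points are (i) choosing a scalar-like rather than vortex-like radial test function, which is essential to cancel the $a$-dependent forcing term, and (ii) identifying the correct cutoff scale $\epsilon^{2/3}$, which is dictated by the Painlev\'e rescaling of Theorem \ref{theorem 1} (ii) and gives rise to the factor $2/3$ inside the logarithm responsible for the coefficient $|\mu_1|\rho/6$.
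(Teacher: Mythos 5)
Your proposal is correct and follows essentially the same route as the paper, which also tests the minimality of $v_{\epsilon,a}$ against a scalar-like radial competitor $\psi=(\psi_1,0)$ equal to $\sqrt{\mu}$ on $|x|\leq\rho-\epsilon^{2/3}$ and linearly cut off to zero near $|x|=\rho$, with the logarithm coming from $\int\frac{1}{2}|\nabla\sqrt{\mu}|^2$ and the forcing term vanishing by the same angular cancellation you make explicit. The only cosmetic difference is that your interpolation region is $[\rho-\epsilon^{2/3},\rho+\epsilon^{2/3}]$ rather than $[\rho-\epsilon^{2/3},\rho]$, which changes nothing in the estimates.
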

\begin{proof}
Let us consider the $C^1$ piecewise map $\psi=(\psi_1,\psi_2)$:
\begin{equation}
\psi_1(x)=
\begin{cases}
\sqrt{\mu(x)} &\text{for } |x|\leq \rho-\epsilon^{2/3} \\
k_\epsilon \epsilon^{-1/3}(\rho-|x|)  &\text{for }\rho-\epsilon^{2/3} \leq|x|\leq \rho\\
0  &\text{for } |x|\geq \rho
\end{cases},\qquad \psi_2 (x)=0, \nonumber
\end{equation} 
with $k_\ve$ defined by $k_\epsilon \epsilon^{1/3}=\sqrt{\mu_{\mathrm{rad}}(\rho- \epsilon^{2/3})}\Longrightarrow k_\epsilon=\mathcal O(1)$.
Since $\psi \in H^1(\R^2,\R^2)$, it is clear that $\mathcal E(v)\leq \mathcal E(\psi)$. 
We check that $\mathcal E(\psi)=\frac{\pi|\mu_1|\rho}{6}|\ln\epsilon|+\mathcal O(1)$, since it is the sum of the following integrals:
$$\int_{\rho-\epsilon^{2/3}<|x|<\rho}\frac{(|\psi|^2-\mu)^2}{4\epsilon^2}=\mathcal O(1),$$
$$\int_{|x|>\rho-\epsilon^{2/3}}\frac{1}{2} |\nabla \psi_1|^2=\mathcal O(1),$$
$$\int_{|x|\leq\rho-\epsilon^{2/3}}\frac{1}{2}\frac{ |\mu'_{\mathrm{rad}}(|x|)|^2}{4\mu}=\frac{|\mu_1|}{8}\int_{|x|\leq\rho-\epsilon^{2/3}}\frac{1}{\rho-|x|}+\mathcal O(1)=\frac{\pi|\mu_1|\rho}{6}|\ln\epsilon|+\mathcal O(1).$$
\end{proof}
We also compute a lower bound of the renormalized energy when $\frac{a}{\epsilon|\ln\epsilon|}$ is bounded:
\begin{lemma}\label{cvzaa}
Assuming that $\frac{a}{\epsilon|\ln\epsilon|}$ is bounded, then
for every $\rho_0<\rho$:
\begin{equation}\label{quaq1aac}
\liminf_{\epsilon\to 0}\frac{1}{|\ln \epsilon|}\int_{\rho_0\leq |x|\leq \rho}\frac{1}{2}|\nabla  v_{\epsilon}|^2\geq 
\frac{\pi|\mu_1|\rho}{6},
\end{equation}
where $\mu_1=\mu_{\mathrm{rad}}'(\rho)$.
\end{lemma}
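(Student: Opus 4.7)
The plan is to obtain the lower bound by reducing $\int\frac{1}{2}|\nabla v_\epsilon|^2$ to a one-dimensional weighted Fisher-information integral of the angular average $h_\epsilon(r):=\int_0^{2\pi}|v_\epsilon(re^{i\theta})|^2\,d\theta$, and then to extract the logarithmic divergence from the $L^2$-convergence $h_\epsilon\to 2\pi\mu$ provided by the upper bound in Lemma~\ref{bbnm}. More precisely, using $|\nabla v|^2\ge|\partial_r v|^2$ together with the pointwise Cauchy--Schwarz inequality $(h_\epsilon'(r))^2\le 4h_\epsilon(r)\int_0^{2\pi}|\partial_r v_\epsilon|^2\,d\theta$, one obtains
\begin{equation*}
\int_{\rho_0\le|x|\le\rho}\tfrac{1}{2}|\nabla v_\epsilon|^2\;\ge\;\int_{\rho_0}^{\rho}\frac{r\,(h_\epsilon'(r))^2}{8\,h_\epsilon(r)}\,dr.
\end{equation*}

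Next I would show that $h_\epsilon\to 2\pi\mu$ strongly in $L^2([\rho_0,\rho])$. Since the forcing contribution satisfies $|(a/\epsilon)\int f\cdot v|\le C\,a/\epsilon=O(|\ln\epsilon|)$ under the hypothesis on $a$, Lemma~\ref{bbnm} yields $\int_{|x|<\rho}(|v_\epsilon|^2-\mu)^2/(4\epsilon^2)\,dx\le C|\ln\epsilon|$, which via Cauchy--Schwarz in $\theta$ gives $\|h_\epsilon-2\pi\mu\|_{L^2([\rho_0,\rho])}=O(\epsilon\sqrt{|\ln\epsilon|})$. The weighted Fisher information $h\mapsto\int r(h')^2/(8h)\,dr$ is lower semicontinuous with respect to $L^1$ convergence (a standard fact that can be proved via its variational representation $\int(h')^2/h=\sup_{\phi}\{2\int \phi h'-\int\phi^2 h\}$, which passes to the limit for each fixed test function $\phi$ after integration by parts). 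Consequently, for every fixed $r^*<\rho$,
\begin{equation*}
\liminf_{\epsilon\to 0}\int_{\rho_0}^{r^*}\frac{r(h_\epsilon'(r))^2}{8h_\epsilon(r)}\,dr\;\ge\;\frac{\pi}{4}\int_{\rho_0}^{r^*}\frac{r(\mu'(r))^2}{\mu(r)}\,dr,
\end{equation*}
and a direct computation using $\mu(r)\sim|\mu_1|(\rho-r)$ near $\rho$ gives $\int_{\rho_0}^{\rho-\delta}r(\mu')^2/\mu\,dr=\rho|\mu_1||\ln\delta|+O(1)$ as $\delta\to 0^+$.

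Finally, a diagonal argument with $r^*(\epsilon)=\rho-\epsilon^c$ for $c\in(0,2/3)$, combined with a \emph{quantified} version of the lsc step that respects the $L^2$-convergence rate of $h_\epsilon$, should yield
\begin{equation*}
\liminf_{\epsilon\to 0}\frac{1}{|\ln\epsilon|}\int_{\rho_0\le|x|\le\rho}\tfrac12|\nabla v_\epsilon|^2\;\ge\;\frac{\pi\rho|\mu_1|}{4}\cdot c,
\end{equation*}
and then sending $c\to 2/3^-$ gives the claimed bound $\pi\rho|\mu_1|/6$. The main obstacle lies precisely in this last step: applying lsc on a fixed sub-interval $[\rho_0,r^*]$ gives only an $O(1)$ bound independent of $\epsilon$, whereas the $|\ln\epsilon|$-growth emerges only when the truncation is pushed to the boundary-layer scale $\rho-r^*(\epsilon)\asymp\epsilon^{2/3}$ of Lemma~\ref{l2}. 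Balancing the rate of divergence of $\int r(\mu')^2/\mu$ against the rate of $L^2$-convergence $O(\epsilon\sqrt{|\ln\epsilon|})$ of $h_\epsilon$ is the delicate technical point that makes the diagonal extraction work.
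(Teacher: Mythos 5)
Your reduction to the one--dimensional quantity $\int_{\rho_0}^{\rho} r(h_\epsilon')^2/(8h_\epsilon)\,dr$ is correct, the $L^2$ rate $\|h_\epsilon-2\pi\mu\|_{L^2}=O(\epsilon\sqrt{|\ln\epsilon|})$ does follow from Lemma \ref{bbnm} together with $a/\epsilon=O(|\ln\epsilon|)$, and the constants match ($\tfrac{\pi}{4}\int_{\rho_0}^{\rho-\epsilon^{c}} r(\mu')^2/\mu\,dr=\tfrac{\pi\rho|\mu_1|c}{4}|\ln\epsilon|+O(1)$, hence $\pi\rho|\mu_1|/6$ as $c\to 2/3$). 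But the proof has a genuine gap exactly where you flag it: lower semicontinuity of the Fisher information on a \emph{fixed} interval $[\rho_0,r^*]$ only produces an $\epsilon$-independent $O(1)$ bound, which says nothing about the $|\ln\epsilon|$ divergence that is the entire content of the lemma. The statement that a ``quantified version of the lsc step'' on the $\epsilon$-dependent interval $[\rho_0,\rho-\epsilon^{c}]$ ``should yield'' the bound is precisely the assertion to be proved, and it is not proved. To close it you must fix the test function $\phi=\mu'/\mu$ in your variational representation, integrate by parts, and control the error $\int_{\rho_0}^{\rho-\epsilon^{c}}\bigl(\tfrac{r\mu'}{2\mu}\bigr)'\,(h_\epsilon-2\pi\mu)\,dr$ together with the boundary term at $r=\rho-\epsilon^{c}$. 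This does work --- $\|(\tfrac{r\mu'}{2\mu})'\|_{L^2([\rho_0,\rho-\epsilon^{c}])}=O(\epsilon^{-3c/2})$, so Cauchy--Schwarz gives an error $O(\epsilon^{1-3c/2}\sqrt{|\ln\epsilon|})=o(1)$ for $c<2/3$, and Lemma \ref{l2} makes the boundary term $O(1)$ --- but none of this is in your write-up, and without it the proof does not exist.

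For comparison, the paper's proof is the explicit version of this computation, carried out for $\sigma=|v_\epsilon|$ rather than for the angular average $h_\epsilon$: it derives the $L^1$ bound $\int_{\Omega_\epsilon}|\sigma-\sqrt{\mu}|=O(\epsilon|\ln\epsilon|)$ from \eqref{quaq1}, writes $\int_{\Omega_\epsilon}|\nabla\sigma|^2-\int_{\Omega_\epsilon}|\nabla\sqrt{\mu}|^2\geq 2\int_{\Omega_\epsilon}(-\Delta\sqrt{\mu})(\sigma-\sqrt{\mu})+\text{(boundary term)}+O(1)$, and bounds the cross term by $O(\epsilon^{1-3\gamma/2}|\ln\epsilon|)$ using $|\Delta\sqrt{\mu}|=O(\epsilon^{-3\gamma/2})$ on $\Omega_\epsilon$ and the boundary term by $O(\epsilon^{\gamma/2})$ via Lemma \ref{l2}. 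Your integration by parts against $\phi=\mu'/\mu$ is the same mechanism in different clothing; the paper's route is arguably simpler because it never needs the Fisher-information formalism or any semicontinuity language --- everything is a direct two-line expansion of a square. If you supply the quantitative error estimate above, your argument becomes a valid (and genuinely different-looking) alternative; as written, it stops one step short of the theorem.
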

\begin{proof}
Let $\gamma \in (0,2/3)$ and $\Omega_\epsilon=\{x:\rho_0\leq |x|\leq \rho-\epsilon^\gamma\}$. The upper bound \eqref{quaq1} implies that
\begin{equation}\label{comb1}
\int_{\Omega_\epsilon}(|v_\epsilon|^2-\mu)^2=\mathcal O(\epsilon^2 |\ln \epsilon|).
\end{equation}
On the other hand we also have
\begin{equation}\label{comb2}
\int_{\Omega_\epsilon} \frac{1}{(|v_\epsilon|+\sqrt{\mu})^2}\leq \int_{\Omega_\epsilon}\frac{1}{\mu}=\mathcal O( |\ln \epsilon|).
\end{equation}
Combining \eqref{comb1} with \eqref{comb2}, and setting $\sigma:=|v_\epsilon|$, we obtain
\begin{equation}\label{comb3}
\int_{\Omega_\epsilon} |\sigma-\sqrt{\mu}|=\mathcal O( \epsilon|\ln \epsilon|).
\end{equation}
At this stage we compute a lower bound of the difference
\begin{align*}
\int_{\Omega_\epsilon} |\nabla\sigma|^2-\int_{\Omega_\epsilon}|\nabla \sqrt{\mu}|^2&=\int_{\Omega_\epsilon} |\nabla(\sigma- \sqrt{\mu})|^2+2\int_{\Omega_\epsilon}
\nabla(\sigma- \sqrt{\mu})\nabla \sqrt{\mu}\\
&\geq 2\int_{\Omega_\epsilon}(-\Delta \sqrt{\mu})(\sigma- \sqrt{\mu})+\frac{\mu'_{\mathrm{rad}}(\rho-\epsilon^\gamma)}{\sqrt{\mu_{\mathrm{rad}}(\rho-\epsilon^\gamma)}}\int_{|x|= \rho-\epsilon^\gamma}(\sigma- \sqrt{\mu})+\mathcal O(1).
\end{align*}
In view of \eqref{comb3} we have $\int_{\Omega_\epsilon}(-\Delta \sqrt{\mu})(\sigma- \sqrt{\mu})=\mathcal O(\epsilon^{1-\frac{3\gamma}{2}}|\ln \epsilon|)$ (since $|\Delta \sqrt{\mu}|=\mathcal O(\epsilon^{-\frac{3\gamma}{2}})$ on $\Omega_\epsilon$),
while $\int_{|x|= \rho-\epsilon^\gamma}(\sigma- \sqrt{\mu})=\mathcal O(\epsilon^{\frac{\gamma}{2}})$ by Lemma \ref{l2}. Therefore
\begin{equation}\label{gamm}
\int_{\rho_0\leq |x|\leq \rho}|\nabla  v_{\epsilon}|^2\geq \int_{\Omega_\epsilon} |\nabla v_\epsilon|^2\geq \int_{\Omega_\epsilon} |\nabla\sigma|^2\geq \int_{\Omega_\epsilon}|\nabla \sqrt{\mu}|^2+\mathcal O(1)=\frac{\pi|\mu_1|\rho\gamma}{2}|\ln \epsilon|+\mathcal O(1),
\end{equation}
and $\liminf_{\epsilon\to 0}\frac{1}{|\ln \epsilon|}\int_{\rho_0\leq |x|\leq \rho}\frac{1}{2}|\nabla  v_{\epsilon}|^2\geq 
\frac{\pi|\mu_1|\rho\gamma}{4}$.
Finally, letting $\gamma\to \frac{2}{3}$ we deduce \eqref{quaq1aac}.
\end{proof}
Now we are going to establish 
\begin{lemma}\label{cvz}
For every $\rho_0\in(0,\rho)$, there exists $b_*>0$ such that when $\limsup_{\epsilon\to 0}\frac{a}{\epsilon|\ln\epsilon|}<b_*$ the set of zeros of the global minimizers cannot have a limit point $l\in D(0;\rho_0)$.
\end{lemma}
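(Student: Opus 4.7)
The plan is to argue by contradiction. Suppose that along some subsequence $\epsilon_n\to 0$ the global minimizer $v_{\epsilon_n}$ has a zero $\bar x_{\epsilon_n}$ with $\bar x_{\epsilon_n}\to l$ and $|l|<\rho_0$. Since $\rho_0<\rho$, this forces $\mu(l)\geq\mu_{\mathrm{rad}}(\rho_0)>0$. The strategy is to show that under these circumstances a Ginzburg--Landau vortex is forced to form around $\bar x_{\epsilon_n}$, producing an energy contribution of order $\pi\mu(l)|\ln\epsilon_n|$ that, combined with the Thomas--Fermi boundary layer quantified in Lemma \ref{cvzaa}, exceeds the upper bound of Lemma \ref{bbnm} as soon as $b_*$ is chosen small enough in terms of $\mu_{\mathrm{rad}}(\rho_0)$.

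First I would identify the local profile by repeating the rescaling/compactness argument in the proof of Theorem \ref{theorem 1}(i). Setting $\tilde v_n(s)=v_{\epsilon_n}(\bar x_{\epsilon_n}+\epsilon_n s)$, Lemma \ref{s3} together with standard elliptic regularity gives, along a further subsequence, a $C^2_{\mathrm{loc}}$ limit $\tilde V$ which is a bounded minimal solution of $\Delta\tilde V+(\mu(l)-|\tilde V|^2)\tilde V=0$. The condition $\tilde V(0)=0$ excludes the constant solution $|\tilde V|\equiv\sqrt{\mu(l)}$, so Mironescu's theorem \cite{mironescu} yields $\tilde V(s)=\sqrt{\mu(l)}(g\circ\eta)(\sqrt{\mu(l)}s)$ for some $g\in O(2)$; in particular $\tilde V$ has degree $\pm 1$ around the origin.

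The quantitative heart of the argument is a lower bound for the vortex core energy. Fix $r\in(0,\rho-|l|)$ and then $\rho'\in(|l|+r,\rho)$, so that for large $n$ the ball $B_n:=B(\bar x_{\epsilon_n},r)$ lies in $\{|x|<\rho\}$ and is disjoint from the annulus $A:=\{\rho'\leq|x|\leq\rho\}$. By a Sandier--Jerrard-type lower bound for configurations whose modulus is close to $\sqrt{\mu(l)}$ and whose boundary degree is $\pm 1$, applied on the annulus $B_n\setminus B(\bar x_{\epsilon_n},R\epsilon_n)$ for $R$ large but fixed, combined with the rescaled convergence of $\tilde v_n$ to $\tilde V$ inside $B(\bar x_{\epsilon_n},R\epsilon_n)$, one obtains
\[
\int_{B_n}\Bigl(\tfrac{1}{2}|\nabla v_{\epsilon_n}|^2+\tfrac{(|v_{\epsilon_n}|^2-\mu)^2}{4\epsilon_n^2}\Bigr)\geq \pi\mu(l)|\ln\epsilon_n|+\mathcal O(1).
\]
Since every term in $\mathcal E$ except the forcing is nonnegative and $B_n\cap A=\emptyset$, adding to the above the lower bound of Lemma \ref{cvzaa} applied at level $\rho'$ yields
\[
\Bigl(\pi\mu(l)+\tfrac{\pi|\mu_1|\rho}{6}\Bigr)|\ln\epsilon_n|+o(|\ln\epsilon_n|)\leq \mathcal E(v_{\epsilon_n})+\frac{a}{\epsilon_n}\int_{\R^2}f\cdot v_{\epsilon_n}.
\]
Because $f\in L^1$, $\|v_{\epsilon_n}\|_{L^\infty}$ is uniformly bounded by Lemma \ref{s3}, and by hypothesis $a/\epsilon_n<b_*|\ln\epsilon_n|$ for $n$ large, Lemma \ref{bbnm} controls the right-hand side by $\bigl(\tfrac{\pi|\mu_1|\rho}{6}+Cb_*\bigr)|\ln\epsilon_n|+\mathcal O(1)$, with $C$ depending only on $\|f\|_{L^1}$ and the $L^\infty$ bound on $v$. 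Dividing by $|\ln\epsilon_n|$ and letting $n\to\infty$ gives $\pi\mu(l)\leq Cb_*$, which contradicts $\mu(l)\geq\mu_{\mathrm{rad}}(\rho_0)$ once $b_*<\pi\mu_{\mathrm{rad}}(\rho_0)/C$ is chosen.

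The hard part is securing the sharp constant $\pi\mu(l)$, with merely bounded remainder, in the vortex lower bound above. This is a standard but delicate piece of Ginzburg--Landau analysis: because $\mu$ is smooth and bounded away from $0$ on a neighborhood of $l$, the effective potential on $B_n$ is essentially the flat quartic $(|v|^2-\mu(l))^2/4$, and the classical logarithmic vortex lower bound applies with $\mu(l)$ playing the role of the constant $1$ in the standard Ginzburg--Landau setting. Everything else is energy accounting: the extra vortex contribution $\pi\mu(l)|\ln\epsilon_n|$ sits \emph{on top} of the unavoidable boundary-layer cost $\tfrac{\pi|\mu_1|\rho}{6}|\ln\epsilon_n|$, while the forcing term can only recover $\mathcal O(a/\epsilon_n)$ in energy; so $a$ must be at least comparable to $\epsilon|\ln\epsilon|$ for an interior zero to be energetically feasible.
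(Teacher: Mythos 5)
Your overall architecture matches the paper's: argue by contradiction, blow up at the zero to identify the standard degree-$\pm 1$ Ginzburg--Landau vortex via Mironescu's theorem, extract a logarithmic energy cost near the zero, and add it to the boundary-layer cost of Lemma \ref{cvzaa} to violate the upper bound of Lemma \ref{bbnm}, with the forcing term absorbed by taking $b_*$ small. The final bookkeeping is also sound. However, the quantitative heart of your argument --- the lower bound $\int_{B_n}\bigl(\tfrac{1}{2}|\nabla v_{\epsilon_n}|^2+\tfrac{(|v_{\epsilon_n}|^2-\mu)^2}{4\epsilon_n^2}\bigr)\geq \pi\mu(l)|\ln\epsilon_n|+\mathcal O(1)$ --- is asserted rather than proved, and as stated it does not follow from the ingredients you have in hand. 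A Sandier--Jerrard-type bound on the annulus $B(\bar x_{\epsilon_n},r)\setminus B(\bar x_{\epsilon_n},R\epsilon_n)$ requires control of the zero set and of the degree on \emph{all} intermediate circles; the $C^2_{\mathrm{loc}}$ convergence of the blow-ups only gives you degree $\pm 1$ on circles of radius $O(\epsilon_n)$. Nothing you have established excludes additional zeros of $v_{\epsilon_n}$ at intermediate scales $\epsilon_n\ll |x-\bar x_{\epsilon_n}|\ll r$, possibly carrying opposite degree; a vortex--antivortex pair at separation $\epsilon_n^{\beta}$ would reduce the logarithmic cost (and with net degree zero on $\partial B_n$ the ball-construction lower bound degenerates entirely). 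So the step you flag as ``standard but delicate'' is in fact the missing lemma, and in the sharp form you claim it would need a separate argument ruling out such configurations.

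The paper circumvents exactly this difficulty with Lemma \ref{techn}, which is where the two proofs genuinely diverge. That lemma is a comparison-map construction exploiting minimality: if the circle energy $\int_{\partial D(x_0;r_0)}\tfrac{1}{2}|\nabla v_\epsilon|^2+\tfrac{(|v_\epsilon|^2-\mu)^2}{4\epsilon^2}$ is below $\lambda/r_0$, one can interpolate inward with a competitor of uniformly bounded renormalized energy, forcing $\mathcal E(v_\epsilon,D(x_0;r_0))\leq C$. Since the blow-up limit $\eta$ has infinite Dirichlet energy, $\mathcal E(v_\epsilon,D(\bar x_\epsilon;r))>C$ for all $r\geq q\epsilon$, so the contrapositive yields the circle lower bound $\lambda/r$ for every $r\in[q\epsilon,\delta|\ln\epsilon|^{-1/2}]$, and integration gives $\lambda|\ln\epsilon|+\mathcal O(\ln|\ln\epsilon|)$ --- with no degree theory at macroscopic scales and no need to control other zeros. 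Note also that the sharp constant $\pi\mu(l)$ is unnecessary for the statement: since $b_*$ is allowed to depend on $\rho_0$, any fixed $\lambda>0$ suffices, which is why the paper's much more elementary (non-sharp) bound closes the argument. To repair your proof you would either need to prove your sharp lower bound (including the exclusion of compensating zeros), or replace it by a quantization argument of the type of Lemma \ref{techn}.
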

The proof of Lemma \ref{cvz} proceeds by contradiction. Let $\{\bar x_\epsilon\}$ be a sequence of zeros of $v_{\epsilon,a}$. Assuming that $\bar x_\epsilon$ converges (up to a subsequence) to a point $x_0\in D(0;\rho_0)$, with $\rho_0<\rho$, we will obtain the bound
\begin{equation}\label{claimdisk1}
 \liminf_{\epsilon\to 0}\frac{1}{|\ln \epsilon|}\int_{D(0;\rho_0)}\Big[\frac{1}{2}|\nabla v_\epsilon|^2+
\frac{(|v_\epsilon|^2-\mu)^2}{4\epsilon^2}\Big]\geq\lambda>0,
\end{equation} 
which combined with \eqref{quaq1aac}, gives for $b\ll 1$ a lower bound of the renormalized energy bigger than the upper bound \eqref{quaq1}.
The limit in \eqref{claimdisk1} will follow from
\begin{lemma}\label{techn}
Let $0<\rho_0<\rho$, and $a(\epsilon)\leq b_0\epsilon|\ln \epsilon|$ for some $b_0>0$. Then there exist constants $\lambda>0$ and $C>0$, such that for every disc 
$D(x_0;r_0)\subset D(0;\rho_0)$ with $r_0\in[\epsilon,|\ln \epsilon|^{-1/2}]$, the condition
\begin{equation}\label{renormBB2}
\int_{\partial D(x_0;r_0)}\frac{1}{2}|\nabla v_\epsilon|^2+\int_{\partial D(x_0;r_0)}
\frac{(|v_\epsilon|^2-\mu)^2}{4\epsilon^2}\leq\frac{\lambda}{r_0}
\end{equation} 
implies the bound
\begin{equation}\label{renormBB3}
\mathcal{E}(v_\epsilon, D(x_0;r_0))\leq C.
\end{equation} 
\end{lemma}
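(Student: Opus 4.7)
The plan is to use the minimality of $v_\epsilon$: for any $\tilde v \in H^1(\R^2,\R^2)$ coinciding with $v_\epsilon$ outside $D := D(x_0;r_0)$, one has $\mathcal E(v_\epsilon,D) \leq \mathcal E(\tilde v,D)$. Since $D \subset D(0;\rho_0) \subset \{\mu > 0\}$, the renormalized density on $D$ reduces to the Ginzburg--Landau form $\tfrac{1}{2}|\nabla u|^2 + \tfrac{(|u|^2-\mu)^2}{4\epsilon^2} - \tfrac{a}{\epsilon}f\cdot u$, and the forcing contribution is immediately bounded via $|\tfrac{a}{\epsilon}\int_D f\cdot u| \leq \|f\|_\infty \|u\|_\infty \pi r_0^2 \, a/\epsilon = O(b_0)$ using $a\leq b_0\epsilon|\ln\epsilon|$ and $r_0^2 \leq |\ln\epsilon|^{-1}$. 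For $r_0 = O(\epsilon)$ the bound $\mathcal E(v_\epsilon, D) \leq C$ is immediate from the pointwise estimates $|v_\epsilon|+\epsilon|\nabla v_\epsilon|\leq M$ provided by Lemma \ref{s3}; so the real work lies in the range $\epsilon \ll r_0 \leq |\ln\epsilon|^{-1/2}$, where a tailored competitor is needed.

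Hypothesis \eqref{renormBB2} translates into $\int_{\partial D}|\partial_\tau v_\epsilon|^2 \leq 2\lambda/r_0$ and $\int_{\partial D}(|v_\epsilon|^2-\mu)^2 \leq 4\epsilon^2\lambda/r_0$. Combined with $H^1(S^1)\hookrightarrow C^{1/2}(S^1)$ and the lower bound $\mu \geq \mu_0 > 0$ on $\overline{D(0;\rho_0)}$, I expect the following for $\lambda$ and $\epsilon$ both small: (a) $|v_\epsilon| \geq c_0 := \sqrt{\mu_0/2}$ on $\partial D$, because otherwise the Cauchy--Schwarz tangential estimate $|v_\epsilon(x)-v_\epsilon(y)|\leq \sqrt{|x-y|\cdot 2\lambda/r_0}$ would propagate the deficit $\mu - |v_\epsilon|^2 \geq \mu_0/4$ along an arc of length $\gtrsim r_0/\lambda$, contradicting the boundary potential bound as soon as $r_0 \geq \epsilon$; and (b) the degree of $v_\epsilon/|v_\epsilon|$ on $\partial D$ is $0$, since a nonzero winding would force $\int_{\partial D}|\partial_\tau v_\epsilon|^2 \geq 2\pi c_0^2/r_0$, incompatible with the tangential bound for $\lambda < \pi c_0^2$. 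One can therefore lift $v_\epsilon|_{\partial D} = \sigma e^{i\phi}$ with single-valued $\phi$, and the harmonic extension $\Phi$ of $\phi$ to $D$ satisfies $\int_D|\nabla \Phi|^2 \leq r_0\int_{\partial D}|\partial_\tau\phi|^2 \leq C\lambda$ by Dirichlet minimality.

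Setting $\delta := \epsilon^2/r_0$ (so $\delta \leq r_0$ in the relevant range), I would define the competitor $\tilde v$ to equal $\sqrt{\mu(x)}\,e^{i\Phi(x)}$ on $D(x_0;r_0-\delta)$, the radial linear interpolation in the annulus $\{r_0-\delta \leq |x-x_0| \leq r_0\}$, and $v_\epsilon$ outside $D$. Inside the inner disk the potential term vanishes identically ($|\tilde v|^2 \equiv \mu$) and the gradient integrates to $\int(|\nabla\sqrt\mu|^2 + \mu|\nabla\Phi|^2) = O(1)+O(\lambda)$. In the annulus, the radial derivative $|\partial_r \tilde v|\leq \delta^{-1}|v_\epsilon(r_0,\theta) - \sqrt\mu\,e^{i\Phi}(r_0-\delta,\theta)|$ integrates, using $\|\sigma-\sqrt\mu\|_{L^2(\partial D)}^2 \leq C\epsilon^2\lambda/r_0$ together with the Poisson-kernel bound $\int_0^{2\pi}|\Phi(r_0-\delta,\cdot)-\phi|^2 d\theta \leq 2\lambda(\delta/r_0)^2$, to $O(\lambda)$; the tangential derivative inherits an $L^2$ bound from the boundary data, contributing $O(\delta/r_0) = O(1)$; and the potential $(|\tilde v|^2-\mu)^2/\epsilon^2$, being pointwise $O(1)$ on an annulus of area $2\pi r_0\delta$, contributes $O(r_0\delta/\epsilon^2) = O(1)$ precisely because of the choice $\delta = \epsilon^2/r_0$. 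Hence $\mathcal E(\tilde v,D) \leq C$, and minimality gives \eqref{renormBB3}.

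The main obstacle is this balancing of transition widths. Naive competitors---the harmonic extension of $v_\epsilon|_{\partial D}$ alone, or $\sqrt\mu$ multiplied by a fixed phase---would generate a potential contribution of order $r_0^4/\epsilon^2$ from the variation of $\mu$ across $D$, which blows up when $r_0 \gg \sqrt\epsilon$. Only the two-scale construction above, with interior modulus exactly $\sqrt\mu$ and a transition layer of width $\delta = \epsilon^2/r_0$ calibrated to the $L^2(\partial D)$-smallness of $\sigma-\sqrt\mu$, yields a bounded energy uniformly across $r_0 \in [\epsilon,|\ln\epsilon|^{-1/2}]$. A secondary technical point is executing claim (a) with the correct dependence on $r_0$ of the Sobolev constants on $S^1_{r_0}$.
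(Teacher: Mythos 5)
Your proposal is correct and follows the same skeleton as the paper's proof --- minimality of $v_\epsilon$, boundary control (modulus bounded below, zero degree, single-valued lifted phase, all valid once $\lambda$ is small), and an explicit competitor agreeing with $v_\epsilon$ on $\partial D(x_0;r_0)$ --- but the competitor itself is genuinely different. The paper uses a single linear interpolation in $r$ over the \emph{whole} disc, namely
\[
u(x_0+r\mathrm{e}^{i\theta})=\Big(\tfrac{r}{r_0}\big[\sigma(\theta)-\mu^{1/2}(x_0+r_0\mathrm{e}^{i\theta})\big]+\mu^{1/2}(x_0+r\mathrm{e}^{i\theta})\Big)\mathrm{e}^{i(\frac{r}{r_0}\phi(\theta-\theta_0)+\phi_0)},
\]
with no harmonic extension and no transition layer: the phase is linearly damped to a constant at the center, and the modulus tracks $\mu^{1/2}$ at every radius plus a linearly damped copy of the boundary deficit. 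The potential term is then controlled because $\big||u|^2-\mu\big|$ at radius $r$ is pointwise dominated (up to a constant, using $\sigma\geq\sigma_0$) by the boundary deficit at the same angle, whose square integrates in $\theta$ to $O(\lambda/r_0^2)$ by \eqref{renormBB2}; multiplying by the area $\sim r_0^2$ gives $O(\lambda)$. So the ``$r_0^4/\epsilon^2$ obstruction'' you identify is real for the naive competitors you list, but the paper avoids it without your two-scale construction --- letting the interpolated modulus follow $\sqrt{\mu}$ radially does the job, and it dispenses with the case split at $r_0\sim\epsilon$, the choice $\delta=\epsilon^2/r_0$, and the Poisson-kernel estimates for the harmonic extension. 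Your route works (I checked the annulus estimates: radial, tangential and potential contributions are each $O(\lambda)+O(1)$ with your calibration), but it is heavier than necessary. A minor further difference: the paper fixes $\lambda$ through the explicit relation \eqref{chlam} and obtains the lower bound $\sigma\geq\frac12\sqrt{\mu_{\mathrm{rad}}(\rho_0)}$ by pigeonholing a good angle $\theta_0$ from the potential bound and then using the total-variation bound $\int_0^{2\pi}|\partial_\theta v_\epsilon|\,\dd\theta\leq\sqrt{4\pi\lambda}$, which is essentially equivalent to your arc-propagation argument for claim (a).
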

\begin{proof}
 Let $\lambda$ be such that 
 \begin{equation}\label{chlam}
  \sqrt{\mu_{\mathrm{rad}}(\rho_0)} -\sqrt{ \frac{2\lambda}{ \pi\mu_{\mathrm{rad}}(\rho_0)} }- 
  \sqrt{4\pi\lambda}=\frac{1}{2}\sqrt{ \mu_{\mathrm{rad}} (\rho_0)}.  
 \end{equation}
We first utilize inequality \eqref{renormBB2} to bound $v_\epsilon$ in modulus and argument on $\partial D(x_0;r_0)$.
From $$\fint_{\partial D(x_0;r_0)}
\big||v_\epsilon|^2-\mu\big|^2\leq\frac{2\epsilon^2\lambda}{\pi r_0^2}\leq \frac{2\lambda}{\pi },$$ 
it follows that there exists $\theta_0\in\R$ such that
$\big||v_\epsilon(x_0+r_0\mathrm{e}^{i\theta_0})|^2-\mu(x_0+r_0\mathrm{e}^{i\theta_0})\big|^2\leq\frac{2\lambda}{\pi }$. Thus,
$$\Big||v_\epsilon(x_0+r_0\mathrm{e}^{i\theta_0})|-\sqrt{\mu(x_0+r_0\mathrm{e}^{i\theta_0})}\Big|
\leq\sqrt{\frac{2\lambda}{\pi \mu(x_0+r_0\mathrm{e}^{i\theta_0})}}\leq \sqrt{\frac{2\lambda}{\pi \mu_{\mathrm{rad}}(\rho_0)}}.$$
On the other hand the condition
$$\frac{1}{2r_0}\int_0^{2\pi}\Big|\frac{\partial v_\epsilon}{\partial \theta}(x_0+r_0\mathrm{e}^{i\theta})\Big|^2\dd \theta\leq 
\int_{\partial D(x_0;r_0)}\frac{1}{2}|\nabla v_\epsilon|^2\leq\frac{\lambda}{r_0},$$
implies that
$\int_0^{2\pi}\big|\frac{\partial v_\epsilon}{\partial \theta}(x_0+r_0\mathrm{e}^{i\theta})\big|\dd \theta\leq \sqrt{4\pi\lambda}$, and
$|v_\epsilon (x_0+r_0\mathrm{e}^{i\theta_2})-v_\epsilon (x_0+r_0\mathrm{e}^{i\theta_1})|\leq \sqrt{4\pi\lambda}$, for
$\theta_2\in[\theta_1,\theta_1+2\pi]$.

In view of \eqref{chlam}, we deduce that $v_\epsilon(x_0+r_0\mathrm{e}^{i\theta})=\sigma(\theta)\mathrm{e}^{i(\phi(\theta-\theta_0)+\phi_0)}$,
with $\sigma(\theta)\geq \sigma_0:=\frac{1}{2}\sqrt{ \mu_{\mathrm{rad}} (\rho_0)}$, $\phi(0)=0$, and
$|\phi|\leq\frac{\pi}{6}$. 
Indeed, we check that
$$\sigma(\theta)\geq \sigma(\theta_0)-|\sigma(\theta_0)-\sigma(\theta)|\geq \sqrt{\mu(x_0+r_0\mathrm{e}^{i\theta_0})} -\sqrt{ \frac{2\lambda}{ \pi\mu_{\mathrm{rad}}(\rho_0)} }- 
  \sqrt{4\pi\lambda}\geq \sigma_0, \ \forall \theta \in [\theta_0,\theta_0+2\pi],$$
  $$|\sin (\phi(\theta-\theta_0))|\leq\frac{\sqrt{4\pi\lambda}}{\sigma(\theta_0)}\leq 
  \frac{ \sqrt{4\pi\lambda}}{ \sqrt{\mu_{\mathrm{rad}}(\rho_0)} -\sqrt{ \frac{2\lambda}{ \pi\mu_{\mathrm{rad}}(\rho_0)} }}= 
  \frac{ \sqrt{4\pi\lambda}}{ \sqrt{ \frac{2\lambda}{ \pi\mu_{\mathrm{rad}}(\rho_0)}}+2\sqrt{4\pi\lambda} }\leq\frac{1}{2}, \ \forall \theta \in [\theta_0,\theta_0+2\pi].$$


Next we define the comparison map
\begin{equation}\label{compm}
u(x_0+r\mathrm{e}^{i\theta})=
\Big(\frac{r}{r_0}[\sigma(\theta)-\mu^{1/2}(x_0+r_0\mathrm{e}^{i\theta})]+\mu^{1/2}(x_0+r\mathrm{e}^{i\theta})\Big)
\mathrm{e}^{i\big(\frac{r}{r_0}\phi(\theta-\theta_0)+\phi_0\big)}, \ \forall r\in [0, r_0], \forall \theta \in \R.
\end{equation}
It is clear that $u$ is continuous on $\overline{ D(x_0;r_0)}$, and that $u\equiv v$ on $\partial D(x_0;r_0)$. We are going to check that $u \in H^1( D(x_0;r_0),\R^2)$, since actually $\int_{ D(x_0;r_0)}|\nabla  u|^2\leq C$, where $C$ is a positive constant depending only on $\mu$, $f$, $b_0$, $\rho_0$ and the uniform bound provided by Lemma \ref{s3}. In what follows it will be convenient to denote by $C$ such a constant that may vary from line to line. 
Indeed, we have
\begin{multline*}
\Big|\frac{\partial u(x_0+r\mathrm{e}^{i\theta})}{\partial r}\Big|^2=\Big|\frac{\sigma(\theta)-\mu^{1/2}(x_0+r_0\mathrm{e}^{i\theta})}{r_0}+\frac{\partial \mu^{1/2}(x_0+r\mathrm{e}^{i\theta})}{\partial r}\Big|^2\\+\frac{\phi^2(\theta-\theta_0)}{r_0^2}\Big|\frac{r}{r_0}[\sigma(\theta)-\mu^{1/2}(x_0+r_0\mathrm{e}^{i\theta})]+\mu^{1/2}(x_0+r\mathrm{e}^{i\theta})\Big|^2,\end{multline*}
\begin{multline*}
\frac{1}{r^2}\Big|\frac{\partial u(x_0+r\mathrm{e}^{i\theta})}{\partial \theta}\Big|^2=\Big|\frac{1}{r_0}\Big[\sigma'(\theta)-\frac{\partial \mu^{1/2}(x_0+r_0\mathrm{e}^{i\theta})}{\partial \theta}\Big]+\frac{1}{r}\frac{\partial \mu^{1/2}(x_0+r\mathrm{e}^{i\theta})}{\partial \theta}\Big|^2\\+\frac{|\phi'(\theta-\theta_0)|^2}{r_0^2}\Big|\frac{r}{r_0}[\sigma(\theta)-\mu^{1/2}(x_0+r_0\mathrm{e}^{i\theta})]+\mu^{1/2}(x_0+r\mathrm{e}^{i\theta})\Big|^2.\end{multline*}
Hence $\int_{ D(x_0;r_0)}\big|\frac{\partial u(x_0+r\mathrm{e}^{i\theta})}{\partial r}\big|^2\leq C$.
To obtain the bound $\int_{ D(x_0;r_0)}\frac{1}{r^2}\big|\frac{\partial u(x_0+r\mathrm{e}^{i\theta})}{\partial \theta}\big|^2\leq C$, we utilize 
\eqref{renormBB2} that gives
$\int_0^{2\pi} |\sigma'(\theta)|^2\dd \theta\leq 2\lambda$ and $\int_0^{2\pi} |\phi'(\theta-\theta_0)|^2\dd \theta\leq \frac{2\lambda}{\sigma_0^2}$.
Finally, from \eqref{compm} we can see that
$$\big| |u(x_0+r\mathrm{e}^{i\theta})|-\mu^{1/2}(x_0+r\mathrm{e}^{i\theta})\big|\leq |\sigma(\theta)-\mu^{1/2}(x_0+r_0\mathrm{e}^{i\theta})|,$$
$$\big| |u(x_0+r\mathrm{e}^{i\theta})|^2-\mu(x_0+r\mathrm{e}^{i\theta})\big|\leq |\sigma^2(\theta)-\mu(x_0+r_0\mathrm{e}^{i\theta})|\frac{\big| |u(x_0+r\mathrm{e}^{i\theta})|+\mu^{1/2}(x_0+r\mathrm{e}^{i\theta})\big|}{|\sigma(\theta)+\mu^{1/2}(x_0+r_0\mathrm{e}^{i\theta})|},$$
$$\big| |u(x_0+r\mathrm{e}^{i\theta})|^2-\mu(x_0+r\mathrm{e}^{i\theta})\big|^2\leq C\big| |v_\epsilon(x_0+r_0\mathrm{e}^{i\theta})|^2-\mu(x_0+r\mathrm{e}^{i\theta})\big|^2,$$
and since 
$\int_0^{2\pi}\frac{( |v_\epsilon(x_0+r_0\mathrm{e}^{i\theta})|^2-\mu(x_0+r\mathrm{e}^{i\theta}))^2}{4\epsilon^2}\leq \frac{\lambda}{r_0^2}$ by \eqref{renormBB2}, we deduce that
$\int_{ D(x_0;r_0)}\frac{(|u|^2-\mu)^2}{4\epsilon^2}\leq C$. On the other hand it is obvious that
$-\frac{a}{\epsilon}\int_{ D(x_0;r_0)}  f\cdot u \leq C$, thus we obtain by minimality of $v_\epsilon$:
\begin{equation*}\label{renormBB}
\mathcal{E}(v_\epsilon,  D(x_0;r_0))\leq \mathcal{E}(u, D(x_0;r_0))\leq C,
\end{equation*}  
which completes the proof. 
\end{proof}

\begin{proof}[Proof of Lemma \ref{cvz}]
We assume that $\sup\frac{a}{\epsilon|\ln\epsilon|}\leq b_0$, where $b_0$ is an arbitrary constant.
Suppose by contradiction that $\bar x_\epsilon$ converges (up to a subsequence) to a point $x_0\in D(0;\rho_0)$ (with $\rho_0<\rho$), and consider the rescaled maps 
$\tilde v_\epsilon(s)=v_\epsilon(\bar x_\epsilon+\epsilon s)$ satisfying
\begin{equation}\label{asd2cc}
\Delta \tilde v(s)+\mu(\bar x_\epsilon+\epsilon s)\tilde v(s)-|\tilde v(s)|^2\tilde v(s)+\epsilon af(\bar x_\epsilon+\epsilon s)=0 , \ \forall s \in \R^2.
\end{equation}
In view of the Lemma \ref{s3} and \eqref{asd2cc}, the first derivatives of $\tilde v_{\epsilon}$ are uniformly bounded for $\epsilon \ll 1$. 
Moreover, by differentiating \eqref{asd2cc}, one also obtains the boundedness of the second derivatives of $\tilde v$ on compact sets.
Thus, we can apply the theorem of Ascoli via a diagonal argument, and show that for a subsequence still called $\tilde v_\epsilon$,
$\tilde v_\epsilon$ converges in $C^2_{\mathrm{ loc}}(\R^2,\R^2)$ to a map $\tilde V$, that we are now going to determine.
For this purpose, we introduce the rescaled energy
\begin{equation}
\label{functres}
\tilde E(\tilde u)=\int_{\R^2}\Big(\frac{1}{2}|\nabla \tilde u(s)|^2-\frac{1}{2}\mu(\bar x_\epsilon+s\epsilon)|\tilde u(s)|^2+\frac{1}{4}|\tilde u(s)|^4-\epsilon a f(\bar x_\epsilon+s\epsilon)\cdot\tilde u(s)\Big)\dd s=E(u),\nonumber
\end{equation}
where we have set $\tilde u(s)=u_\epsilon(\bar x_\epsilon+s\epsilon)$ i.e. $u_\epsilon(x)=\tilde u\big(\frac{x-\bar x_\epsilon}{\epsilon}\big)$.
Let $\tilde \xi$ be a test function with support in the compact set $K$. We have $\tilde E(\tilde v_\epsilon+\tilde \xi,K)\geq \tilde E(\tilde v_\epsilon,K)$, and at the limit
$G_{0}( \tilde V+\tilde\xi,K)\geq  G_{0}(\tilde V,K)$, where $$ G_{0}(\psi,K)=\int_{K}\left[\frac{1}{2}|\nabla\psi|^2-\frac{1}{2}\mu(x_0)|\psi|^2+\frac{1}{4}|\psi |^4\right],$$
or equivalently $G( \tilde V+\tilde\xi,K)\geq G(\tilde V,K)$, where
\begin{equation}\label{gl}
G(\psi,K)=\int_{K}\left[\frac{1}{2}|\nabla\psi|^2-\frac{1}{2}\mu(x_0)|\psi|^2+\frac{1}{4}|\psi|^4+\frac{(\mu(x_0))^2}{4}\right]
=\int_{K}\left[\frac{1}{2}|\nabla\psi|^2+\frac{1}{4}(|\psi|^2-\mu(x_0))^2\right].
\end{equation}
Thus, we deduce that $\tilde V$ is a bounded minimal solution of the P.D.E. associated to the functional \eqref{gl}:
\begin{equation}\label{odegl}
\Delta \tilde V(s)+(\mu(x_0)-|\tilde V(s)|^2)\tilde V(s)=0,
\end{equation}
and since $\tilde V(0)=0$, we obtain (up to orthogonal transformation in the range) $\tilde V(s)=\sqrt{\mu(x_0)}\,\eta(\sqrt{\mu(x_0)}s)$, where 
$\eta(s)=\eta_{\mathrm{rad}}(|s|)\frac{s}{|s|}$ is the radial solution to the Ginzburg-Landau equation \eqref{likegl}.
It is known that $\int_{\R^2}|\nabla \eta|^2=\infty$. Therefore, if $q>1$ is such that
$$3C<\mu_{\mathrm{rad}}(\rho_0)\int_{ D(0;q\sqrt{ \mu_{\mathrm{rad}} (\rho_0)}) }|\nabla \eta|^2\leq
\mu(x_0)\int_{D(0;q\sqrt{\mu(x_0)})}|\nabla \eta|^2=\int_{D(0;q)}|\nabla \tilde V|^2,$$
where $C$ is the constant given in Lemma \ref{techn}, then for $\epsilon\leq\epsilon_0$ small enough, we have $
\frac{3C}{2}<\frac{1}{2}\int_{D(\bar x_\epsilon;q\epsilon)}|\nabla v_\epsilon|^2$. In addition, by taking $\delta>0$ sufficiently small, we can ensure that $\mathcal{E}(v_\epsilon, D(\bar x_\epsilon;r))>C$, for every $r\in [q\epsilon, \delta|\ln \epsilon|^{-1/2}]$, and every $\epsilon\leq \epsilon_0$. Next, applying Lemma \ref{techn}, we obtain for $r\in [q\epsilon, \delta|\ln \epsilon|^{-1/2}]$ and $\epsilon\leq\epsilon_0$ the inequality:
\begin{equation}\label{renormBB3}
\int_{\partial D(\bar x_\epsilon;r)}\frac{1}{2}|\nabla v_\epsilon|^2+\int_{\partial D(\bar x_\epsilon;r)}
\frac{(|v_\epsilon|^2-\mu)^2}{4\epsilon^2}>\frac{\lambda}{r}.
\end{equation} 
Finally, an integration of \eqref{renormBB3} gives
\begin{equation*}\label{renormBB4}
\int_{D(\bar x_\epsilon;\delta)}\frac{1}{2}|\nabla v_\epsilon|^2+\int_{ D(\bar x_\epsilon;\delta)}
\frac{(|v_\epsilon|^2-\mu)^2}{4\epsilon^2}\geq \int_{q\epsilon}^{\delta|\ln \epsilon|^{-1/2}}\frac{\lambda}{r}\dd r\geq \lambda|\ln \epsilon|-\frac{\lambda}{2}\ln(|\ln \epsilon|)+\mathcal O(1),
\end{equation*} 
from which \eqref{claimdisk1} follows. Combining \eqref{quaq1aac} with \eqref{claimdisk1}, we immediately see that the upper bound \eqref{quaq1} 
is violated when $\limsup_{\epsilon\to 0}\frac{a}{\epsilon|\ln\epsilon|}<b_*:=\min\big(\frac{\lambda}{M\|f\|_{L^1}},b_0\big)$, with $M\geq\|v_\epsilon\|_{L^\infty}$ (cf. Lemma \ref{s3}). Therefore the convergence of $\bar x_\epsilon$ to a point $x_0$ such that $|x_0|<\rho_0$ is exluded provided $\limsup_{\epsilon\to 0}\frac{a}{\epsilon|\ln\epsilon|}<b_*$. 
\end{proof}
To complete the proof we utilize part (i) of Theorem \ref{theoremz}, and deduce that any limit point $l\in\R^2$ of the set of zeros of the global minimizers satisfies \eqref{zestbbb}. 
When $a=o(\epsilon |\ln\epsilon|)$, it is immediate that $|l|=\rho$. 
\end{proof}

\begin{proof}[Theorem \ref{theoremz} (iii)] 
In the set $\Omega_v:=\{x\in\R^2: v(x)\neq 0\}$, we consider the polar form of $v$:
\begin{equation}\label{polar2}
v(x)=|v(x)| \frac{v(x)}{|v(x)|}=:\sigma (x) \n(x),\text{ where } \sigma(x):=|v(x)|, \ \n(x):=\frac{v(x)}{|v(x)|}.
\end{equation}
Setting 
\begin{equation}\label{enepol}
F(x)=
\begin{cases}
0&\text{when } v(x) =0, \smallskip \\
|\nabla \sigma(x)|^2   + \sigma^2 (x) |\nabla \n (x)|^2, &\text{when } v(x)\neq 0, \smallskip \\
\end{cases}
\end{equation}
we get (cf. \cite{evans-gariepy})
\begin{equation}\label{enepol2}
\int_{\R^2}|\nabla v(x)|^2\dd x=\int_{\R^2} F(x)\dd x=\int_{\Omega_v}\big(|\nabla \sigma(x)|^2   + \sigma^2 (x) |\nabla \n(x)|^2\big)\dd x.
\end{equation}
The next Lemma which is based on the previous decomposition, provides some information on the direction of the vector field $v$:
\begin{lemma}
 Assuming that $a$ is bounded and $\rho_1>\rho$, there exist a constant $K$, such that 
 \begin{equation}\label{inegf}
 \int_{\Omega_v\cap\{|x|\leq \rho_1\}}\Big(\frac{1}{2} \sigma^2 |\nabla \n|^2+\frac{ a}{\epsilon}|f|\sigma\Big[1-\frac{x}{|x|}\cdot \n\Big]\Big)\leq K |\ln \epsilon| \text{ for } \epsilon\ll 1.
 \end{equation}\end{lemma}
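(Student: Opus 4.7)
The strategy will be to test the minimality of $v$ against a comparison map that has the same modulus as $v$ but is aligned with $x/|x|$ rather than with $\n$; since $f=|f|\,x/|x|$, this substitution lowers the forcing term by precisely the quantity we want to bound. To this end I fix a smooth cutoff $\chi\in C^\infty([0,\infty),[0,1])$ with $\chi\equiv 0$ on $[0,1]$ and $\chi\equiv 1$ on $[2,\infty)$, set $\chi_\epsilon(r):=\chi(r/\epsilon)$, and introduce
\[
\psi_\epsilon(x):=\chi_\epsilon(|x|)\,\sigma(x)\,\tfrac{x}{|x|},\qquad \psi_\epsilon(0):=0.
\]
Since $\sigma=|v|\in H^1(\R^2)$ (via \eqref{enepol2}) and $\chi_\epsilon$ kills the singularity of $x/|x|$ at the origin, one verifies $\psi_\epsilon\in H^1(\R^2,\R^2)$, hence $\mathcal{E}(v)\leq\mathcal{E}(\psi_\epsilon)$. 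The crucial feature is that $|\psi_\epsilon|^2=\chi_\epsilon^2\sigma^2$ coincides with $|v|^2$ on $\{|x|\geq 2\epsilon\}$, so the potential parts of $\mathcal{E}(v)$ and $\mathcal{E}(\psi_\epsilon)$ agree there, while on the small disc $\{|x|\leq 2\epsilon\}$ their difference is $\mathcal{O}(1)$ (integrand of size $1/\epsilon^2$, area $\mathcal{O}(\epsilon^2)$), using the uniform bounds on $\sigma$ and $\mu$.

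For the kinetic energies, a polar-coordinate computation gives $|\nabla\psi_\epsilon|^2=|\nabla\sigma|^2+\sigma^2/|x|^2$ on $\{|x|\geq 2\epsilon\}$, while \eqref{enepol2} yields $|\nabla v|^2=|\nabla\sigma|^2+\sigma^2|\nabla\n|^2$ on $\Omega_v$. Both kinetic contributions on $\{|x|\leq 2\epsilon\}$ are $\mathcal{O}(1)$ thanks to $|\nabla v|=\mathcal{O}(1/\epsilon)$ (Lemma~\ref{s3}), $\sigma\leq M$, $|\chi'_\epsilon|\leq C/\epsilon$, and the area $\mathcal{O}(\epsilon^2)$. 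For the forcing, from $f\cdot\psi_\epsilon=|f|\chi_\epsilon\sigma$ and $f\cdot v=|f|\sigma\,\tfrac{x}{|x|}\cdot\n$ I obtain
\[
-\tfrac{a}{\epsilon}\!\int f\cdot\psi_\epsilon+\tfrac{a}{\epsilon}\!\int f\cdot v=-\tfrac{a}{\epsilon}\!\int|f|\sigma\bigl[1-\tfrac{x}{|x|}\cdot\n\bigr]+\tfrac{a}{\epsilon}\!\int|f|\sigma(1-\chi_\epsilon),
\]
the last term being $\mathcal{O}(\epsilon)$ since its integrand is supported in $\{|x|\leq 2\epsilon\}$.

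Combining the three pieces, the $|\nabla\sigma|^2$ contributions cancel and $\mathcal{E}(v)\leq\mathcal{E}(\psi_\epsilon)$ rearranges to
\[
\int_{\Omega_v\cap\{|x|\geq 2\epsilon\}}\!\tfrac{1}{2}\sigma^2|\nabla\n|^2+\tfrac{a}{\epsilon}\!\int_{\R^2}|f|\sigma\bigl[1-\tfrac{x}{|x|}\cdot\n\bigr]\leq\int_{|x|\geq 2\epsilon}\!\tfrac{\sigma^2}{2|x|^2}+\mathcal{O}(1).
\]
The right-hand side I split at $|x|=\rho_1$: on $\{2\epsilon\leq|x|\leq\rho_1\}$ the bound $\sigma\leq M$ gives $\pi M^2\ln(\rho_1/2\epsilon)=\mathcal{O}(|\ln\epsilon|)$, while on $\{|x|\geq\rho_1\}$ the estimate $1/|x|^2\leq 1/\rho_1^2$ combined with $\|v\|_{L^2}^2=\mathcal{O}(1)$ (extracted from the computation in the proof of Lemma~\ref{lem exist min}, using $E(v)\leq 0$) produces only $\mathcal{O}(1)$. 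Finally, restricting the left-hand side to $\{|x|\leq\rho_1\}$ (which only decreases the nonnegative integrand) and absorbing $\int_{|x|<2\epsilon}\tfrac{1}{2}\sigma^2|\nabla\n|^2=\mathcal{O}(1)$ into the constant delivers \eqref{inegf}.

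The main technical obstacle will be checking that every error term is genuinely $\mathcal{O}(1)$ rather than hiding a spurious logarithmic contribution, because the dominant integral $\int_{2\epsilon\leq|x|\leq\rho_1}\sigma^2/|x|^2$ already saturates the announced bound $K|\ln\epsilon|$; any unnoticed $|\ln\epsilon|$ picked up from the smoothing near $0$, the potential mismatch on $\{|x|\leq 2\epsilon\}$, or the tail region $\{|x|\geq\rho_1\}$ would spoil the conclusion, so each of these requires a careful, if routine, bookkeeping.
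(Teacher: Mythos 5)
Your proof is correct and follows essentially the same route as the paper: both test the minimality of $v$ against a radially aligned comparison map of modulus $\sigma$, regularized near the origin, so that the kinetic-energy difference yields $\tfrac12\sigma^2|\nabla\n|^2-\tfrac{\sigma^2}{2|x|^2}$ and the logarithm arises from $\int_{\{\epsilon\lesssim|x|\leq\rho_1\}}\sigma^2/|x|^2$. The only deviations are technical and harmless: the paper interpolates back to $v$ on the annulus $\rho_1\leq|x|\leq\rho_1+1$ and compares energies on $D(0;\rho_1+1)$ (invoking Lemma \ref{s3gg} there), whereas you use a global comparison map and dispose of the tail $\{|x|\geq\rho_1\}$ via the uniform bound $\|v\|_{L^2}=\mathcal O(1)$, which indeed follows from the estimate in the proof of Lemma \ref{lem exist min} combined with $E(v)\leq E(0)=0$.
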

 \begin{proof}
We define the constants:
\begin{itemize}
\item $ M>0$ which is the uniform bound of $|v_{\epsilon,a}|$,
\item $ \rho_2 =\rho_1+1$.
\end{itemize}
Next writing $x=r\mathrm{e}^{i\theta}$, we consider the comparison map
\begin{equation}
\psi(r\mathrm{e}^{i\theta})=
\begin{cases}
\frac{r}{\epsilon}\sigma(\epsilon\mathrm{e}^{i\theta})\mathrm{e}^{i\theta} &\text{for } r \in [0,\epsilon], \\
\sigma(x)\mathrm{e}^{i\theta} &\text{for } r \in [\epsilon ,\rho_1],\\
\frac{r-\rho_1}{\rho_2-\rho_1}v(x)+\frac{\rho_2-r}{\rho_2-\rho_1}\sigma(x)\mathrm{e}^{i\theta}   &\text{for } r \in [\rho_1,\rho_2].
\end{cases} \nonumber
\end{equation}
It is clear that $\psi\in H^1(D(0;\rho_2),\R^2)$ and that $\psi\equiv v$ for $|x|=\rho_2$, thus 
\begin{equation}\label{diff}
 \mathcal E(v,D(0;\rho_2))-\mathcal  E(\psi,D(0;\rho_2))\leq 0.
\end{equation}
Since $\epsilon \nabla v$ is uniformly bounded on $\R^2$ (cf. Lemma \ref{s3}), and $\frac{v}{\epsilon}$ as well as $\nabla v$ are uniformly bounded on $\{|x|\geq \rho_1\}$ (cf. Lemma \ref{s3gg}), one can check that 
$$\mathcal E(v,D(0;\epsilon))-\mathcal  E(\psi,D(0;\epsilon))\leq K,$$
$$\mathcal E(v,\{\rho_1\leq|x|\leq \rho_2\})-\mathcal  E(\psi,\{\rho_1\leq|x|\leq \rho_2\}))\leq K,$$
where $K$ is a constant depending only on $\mu$, $f$, $\rho_1$ and the previous uniform bounds, that may vary from line to line. Therefore,
\begin{equation}\label{inegf1}
 \int_{\Omega_v\cap \{\epsilon\leq|x|\leq \rho_1\}}\Big(\frac{1}{2} (|\nabla v|^2-|\nabla\psi|^2)+\frac{ a}{\epsilon}(|f|\sigma-f\cdot v)\Big)\leq K,\end{equation}
and since $(|\nabla v(x)|^2-|\nabla\psi(x)|^2)=\sigma^2(x) |\nabla \n(x)|^2-\frac{\sigma^2(x)}{|x|^2}$ holds for $x\in \Omega_v\cap \{\epsilon\leq|x|\leq \rho_1\}$,  we deduce that
 \begin{equation}\label{inegf00}
 \int_{\Omega_v\cap \{\epsilon\leq|x|\leq \rho_1\}}\Big(\frac{1}{2} \sigma^2 |\nabla \n|^2+\frac{ a}{\epsilon}(|f|\sigma-f\cdot v)\Big)\leq \pi M^2 (|\ln \epsilon|+\ln \rho_1)+K \text{ for } \epsilon\ll 1.
 \end{equation}
Finally, \eqref{inegf} follows by combining \eqref{inegf00} with 
\begin{equation}\label{inegf000}
 \int_{\Omega_v\cap \{|x|\leq \epsilon\}}\Big(\frac{1}{2} \sigma^2 |\nabla \n|^2+\frac{ a}{\epsilon}(|f|\sigma-f\cdot v)\Big)\leq K,
 \end{equation}
and adjusting the constant $K$.
 \end{proof}
Now we prove
\begin{lemma}\label{lemze}
For every $\rho_0\in(0,\rho)$, there exists $ b^*>0$ such that when $a$ is bounded and
$\limsup_{\epsilon\to 0}\frac{a}{\epsilon|\ln\epsilon|^2}>b^*$,
the zeros of the global minimizers have a limit point $l$ such that
$|l|\leq \rho_0$. In particular, the condition
$\limsup_{\epsilon\to 0}\frac{a}{\epsilon|\ln\epsilon|^2}=\infty$ with $a$ bounded, implies the existence of a zero $\bar x_\epsilon \to 0$.
\end{lemma}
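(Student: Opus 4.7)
The plan is to argue by contradiction. Suppose that no limit point of the zeros of $v_\epsilon$ (as $\epsilon\to 0$) lies in $\overline{D(0,\rho_0)}$. Then there exist $\eta>0$ and $\epsilon_0>0$ such that $v_\epsilon$ is zero-free on $D(0,\rho_0+\eta)$ for every $\epsilon<\epsilon_0$; the polar decomposition $v_\epsilon=\sigma_\epsilon\n_\epsilon$ is smooth on $\overline{D(0,\rho_0)}$, and Theorem \ref{theorem 1} (i) gives $\sigma_\epsilon\to\sqrt{\mu}$ uniformly on the annulus $A:=\{\rho_0/2\leq|x|\leq\rho_0\}$, so that $\sigma_\epsilon\geq c_1>0$ and $|f|\sigma_\epsilon\geq c_2>0$ on $A$ for $\epsilon$ small, with $c_1,c_2$ depending only on $\rho_0$. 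Moreover the topological degree of $\n_\epsilon$ on every circle $\partial D(0,r)$, $r\in(0,\rho_0)$, is zero, because $v_\epsilon$ does not vanish in $D(0,\rho_0+\eta)$.

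The cornerstone of the argument is a pointwise-in-$r$ lower bound on the angular Dirichlet energy produced by the vanishing degree. Writing $\n_\epsilon(r,\theta)=e^{i(\theta+\psi_\epsilon(r,\theta))}$, the degree-zero condition reads $\psi_\epsilon(r,2\pi)-\psi_\epsilon(r,0)=-2\pi$, so
\begin{equation*}
\int_0^{2\pi}\sin\bigl(\tfrac{\psi_\epsilon}{2}\bigr)\partial_\theta\psi_\epsilon\,\dd\theta=-2\bigl[\cos(\psi_\epsilon/2)\bigr]_0^{2\pi}=4.
\end{equation*}
Cauchy-Schwarz together with $1-\cos\psi=2\sin^2(\psi/2)$ yields
\begin{equation*}
\int_0^{2\pi}(\partial_\theta\psi_\epsilon)^2\,\dd\theta\geq\frac{32}{I_\epsilon(r)},\qquad I_\epsilon(r):=\int_0^{2\pi}\!\bigl(1-\tfrac{x}{|x|}\cdot\n_\epsilon\bigr)\,\dd\theta,
\end{equation*}
and since $|\partial_\theta\n_\epsilon|^2=(1+\partial_\theta\psi_\epsilon)^2\geq\tfrac{1}{2}(\partial_\theta\psi_\epsilon)^2-1$, we obtain
\begin{equation*}
\int_0^{2\pi}|\partial_\theta\n_\epsilon|^2\,\dd\theta\geq\frac{16}{I_\epsilon(r)}-2\pi.
\end{equation*}

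With this in hand, I combine the two nonnegative contributions in \eqref{inegf}. On $A$, using $\sigma_\epsilon\geq c_1$ and $|f|\sigma_\epsilon\geq c_2$, \eqref{inegf} gives
\begin{equation*}
\int_{\rho_0/2}^{\rho_0}\frac{1}{r}\int_0^{2\pi}|\partial_\theta\n_\epsilon|^2\,\dd\theta\,\dd r\leq\frac{2K|\ln\epsilon|}{c_1^2},\qquad \int_{\rho_0/2}^{\rho_0}r\,I_\epsilon(r)\,\dd r\leq\frac{K\epsilon|\ln\epsilon|}{a\,c_2}.
\end{equation*}
Integrating the pointwise lower bound against $\dd r/r$ yields $\int_{\rho_0/2}^{\rho_0}\dd r/(rI_\epsilon(r))\leq C_1|\ln\epsilon|$ for some $C_1=C_1(\rho_0)$; then Cauchy-Schwarz produces
\begin{equation*}
(\ln 2)^2=\biggl(\int_{\rho_0/2}^{\rho_0}\frac{\dd r}{r}\biggr)^{\!2}\leq\int_{\rho_0/2}^{\rho_0}\frac{\dd r}{rI_\epsilon(r)}\cdot\int_{\rho_0/2}^{\rho_0}\frac{I_\epsilon(r)}{r}\,\dd r\leq C_1|\ln\epsilon|\cdot\frac{4}{\rho_0^2}\cdot\frac{K\epsilon|\ln\epsilon|}{a\,c_2}.
\end{equation*}
Rearranging gives $a\leq C_*(\rho_0)\epsilon|\ln\epsilon|^2$ for a constant $C_*(\rho_0)$ depending only on $\rho_0$; choosing $b^*:=2C_*(\rho_0)$ contradicts the hypothesis $\limsup_{\epsilon\to 0}a/(\epsilon|\ln\epsilon|^2)>b^*$. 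The main obstacle is the pointwise lower bound on the circle Dirichlet energy in terms of the alignment defect $I_\epsilon(r)$: the key is that the zero-degree condition must be paid for through a Jacobian-type identity which, combined with Cauchy-Schwarz, produces exactly the $1/I_\epsilon(r)$ scaling needed for the final estimate to match the critical scale $\epsilon|\ln\epsilon|^2$. Finally, the ``In particular'' clause follows by a diagonal extraction: when $\limsup a/(\epsilon|\ln\epsilon|^2)=\infty$, the main statement applies for every $\rho_0=1/k$, so zeros of $v_\epsilon$ accumulate in $\overline{D(0,1/k)}$ along a subsequence, and diagonalizing over $k$ produces a single sequence of zeros converging to $0$.
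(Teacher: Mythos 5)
Your overall strategy coincides with the paper's up to the point where the two consequences of \eqref{inegf} must be played against each other, and there you take a genuinely different route. The paper quantifies the tension between small angular Dirichlet energy and small alignment defect by a measure-theoretic decomposition: it isolates the set $R$ of radii for which the bad angular set $F_r=\{\theta:\ \phi_r(\theta)-\theta\notin(-\pi/3,\pi/3)\bmod 2\pi\}$ has measure $O(\frac{\epsilon}{a}|\ln\epsilon|)$, shows $\mathcal L^1(R)\geq\rho_0/4$ by a Chebyshev-type estimate, and then applies Cauchy--Schwarz on the short interval where $\phi_r-\theta$ must traverse $(\pi/3,5\pi/3)$. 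You instead aim for a pointwise-in-$r$ inequality $\int_0^{2\pi}|\partial_\theta\n|^2\,\dd\theta\gtrsim I_\epsilon(r)^{-1}$ obtained from an exact boundary-term identity, followed by a single Cauchy--Schwarz in $r$. This is arguably cleaner, avoids the good/bad set bookkeeping, and lands on the same $a\lesssim\epsilon|\ln\epsilon|^2$ contradiction with the same dependence of $b^*$ on $\rho_0$.

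However, the identity at the heart of your argument is not correct as stated. With $\psi(2\pi)=\psi(0)-2\pi$ one gets $\int_0^{2\pi}\sin(\psi/2)\,\partial_\theta\psi\,\dd\theta=-2\big[\cos(\psi/2)\big]_0^{2\pi}=4\cos(\psi(0)/2)$, not $4$; changing the lift of $\psi$ by $2\pi$ only flips the sign, so the best unconditional value is $4|\cos(\psi(0)/2)|$, which vanishes when $\n(r,0)=-x/|x|$, and then the claimed bound $\int_0^{2\pi}(\partial_\theta\psi)^2\,\dd\theta\geq 32/I_\epsilon(r)$ collapses. The flaw is repairable: when $I_\epsilon(r)<2\pi$ there is an angle $\theta_0(r)$ with $1-\cos\psi(\theta_0)<1$, hence $|\cos(\psi(\theta_0)/2)|>1/\sqrt2$ for a suitable lift, and re-basing the integral on $[\theta_0,\theta_0+2\pi]$ gives $\int_0^{2\pi}(\partial_\theta\psi)^2\,\dd\theta\geq 16/I_\epsilon(r)$, hence $\int_0^{2\pi}|\partial_\theta\n|^2\,\dd\theta\geq 8/I_\epsilon(r)-2\pi$; for $I_\epsilon(r)\geq 2\pi$ this last inequality holds trivially because its right-hand side is negative. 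With this correction (only the numerical constant changes) the rest of your argument --- the integration against $\dd r/r$, the final Cauchy--Schwarz in $r$, and the diagonal extraction for the ``in particular'' clause --- goes through.
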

\begin{proof}
Assume by contradiction that the zeros of the global minimizer $v_{\epsilon}$ have no limit point such that $|l|\leq \rho_0$. As a consequence, there exists $\epsilon_0>0$ such that $\forall \epsilon<\epsilon_0$, $\forall x\in \overline {D(0;\rho_0)}$: $v_\epsilon(x)\neq 0$. Moreover, proceeding as in the proof of Theorem \ref{theorem 1} (i), we can see that $|v_\epsilon|$ converges uniformly on $\overline {D(0;\rho_0)}$ to $\sqrt{\mu}$, as $\epsilon \to 0$. Thus, for $\epsilon\ll 1$ we have 
\[
\min_{\overline {D(0;\rho_0)}} \sigma^2_\epsilon \geq \frac{1}{2}\mu_{\mathrm{rad}}(\rho_0),
\]
and from
\eqref{inegf} we deduce that
 \begin{equation}\label{inegfbb}
 \int_{|x|\leq \rho_0} |\nabla \n|^2\leq K_1 |\ln \epsilon|, \  \ \frac{ a}{\epsilon}\int_{\rho_0/2\leq|x|\leq \rho_0}\Big[1-\frac{x}{|x|}\cdot \n\Big]\leq K_2 |\ln \epsilon| \text{ for } \epsilon\ll 1,
 \end{equation}
where $K_i$ ($i=1,2$) are constants. At this stage we notice that since for every $\epsilon<\epsilon_0$, $v_\epsilon$ does 
not vanish on $\overline {D(0;\rho_0)}$, the degree of $v_\epsilon$ on the circles $|x|=r$, with $r\in (0,\rho_0]$, is zero. 
In particular, we can write $\n(r\mathrm{e}^{i\theta})=\mathrm{e}^{i\phi_r(\theta)}$, where $\phi_r:\R\to \R$ is a $2\pi$-periodic smooth function, for every $r\in (0,\rho_0]$. 
Now we define the measurable sets
\begin{itemize}
\item $F:=\left\{x: \frac{\rho_0}{2}\leq |x|\leq\rho_0, \, \frac{x}{|x|}\cdot \n\leq\frac{1}{2}\right\}=\left\{ x=r\mathrm{e}^{i\theta}: \frac{\rho_0}{2}\leq r\leq\rho_0, \ \phi_r(\theta)-\theta\notin (-\pi/3,\pi/3)\mod 2\pi\right\}$,
\item $F_r:=\{\theta\in[0,2\pi]: \phi_r(\theta)-\theta\notin(-\pi/3,\pi/3)\mod 2 \pi\}$, 
\item $R:=\left\{ r\in [\rho_0/2,\rho_0]: \mathcal L^1(F_r)< \frac{16 \epsilon}{a\rho_0^2}K_2 |\ln \epsilon|\right\}$,
\item  $R^c:= \left\{ r\in [\rho_0/2,\rho_0]: \mathcal L^1(F_r)\geq \frac{16 \epsilon}{a\rho_0^2}K_2 |\ln \epsilon|\right\}=[\rho_0/2,\rho_0]\setminus R$,
\end{itemize}where $\mathcal L^n$ denotes the $n$-dimensional  Lebesgue measure. 
It follows from these definitions and  \eqref{inegfbb} that
 \begin{equation}\label{inegfbbcc}
\frac{4}{\rho_0}K_2|\ln\epsilon|\mathcal L^1(R^c)\leq
\frac{a\rho_0}{4\epsilon}\int_{\rho_0/2}^{\rho_0}\mathcal L^1(F_r)\dd r\leq\frac{a}{2\epsilon}\mathcal L^2(F)\leq K_2 |\ln \epsilon|, \text{ for } \epsilon\ll 1,
 \end{equation}
thus $\mathcal L^1(R)\geq\frac{\rho_0}{4}$.
Moreover since $\phi_r$ is periodic, for every $r\in [\rho_0/2,\rho_0]$ there exists $\theta_1(r)\in \R$ and $\theta_2(r)\in (\theta_1(r),\theta_1(r)+2\pi)$, such that $\phi_r(\theta_1(r))-\theta_1(r)=\frac{5\pi}{3}$, $\phi_r(\theta_2(r))-\theta_2(r)=\frac{\pi}{3}$, and $\phi_r(\theta) -\theta \in (\frac{\pi}{3}, \frac{5\pi}{3})$ for $\theta\in(\theta_1(r),\theta_2(r))$. By definition of $F_r$, we also have $\theta_2(r)-\theta_1(r)\leq\mathcal L^1(F_r)$. Next using the Cauchy-Schwarz inequality we obtain
\[
\frac{(4\pi)^2}{3^2}=\left(\int_{\theta_1(r)}^{\theta_2(r)} (\phi'_r(\theta)-1)\dd\theta\right)^2\leq(\theta_2(r)-\theta_1(r)) \int_{\theta_1(r)}^{\theta_2(r)}|\phi'_r(\theta)-1|^2\dd\theta.
\]
As a consequence, 
$$\frac{(4\pi)^2}{3^2\mathcal L^1(F_r)}\leq\frac{(4\pi)^2}{3^2(\theta_2(r)-\theta_1(r))}\leq \int_{\theta_1(r)}^{\theta_2(r)}|\phi'_r(\theta)-1|^2\dd \theta\Rightarrow \frac{(4\pi)^2}{3^2\mathcal L^1(F_r)}-\frac{8\pi}{3}\leq \int_{\theta_1(r)}^{\theta_2(r)}|\phi'_r(\theta)|^2\dd \theta,$$
and
\begin{align}\label{asdf}
K_1|\ln\epsilon|&\geq \int_{\rho_0/2\leq|x|\leq \rho_0} |\nabla \n|^2\geq \int_{\rho_0/2}^{\rho_0}\frac{\dd r}{r}\int_0^{2\pi}|\n_\theta(r,\theta)|^2\dd \theta = \int_{\rho_0/2}^{\rho_0}\frac{\dd r}{r}\int_0^{2\pi}|\phi'_r(\theta)|^2\dd \theta \nonumber \\
&\geq\frac{1}{\rho_0}\int_R \frac{(4\pi)^2\dd r}{3^2\mathcal L^1(F_r)}-\frac{4\pi}{3}\geq\frac{(\pi\rho_0)^2a}{6^2  K_2\epsilon |\ln \epsilon|}-\frac{4\pi}{3}.
\end{align}
Now we can see that \eqref{asdf} is violated when $\limsup_{\epsilon\to 0}\frac{a}{\epsilon|\ln \epsilon|^2}>\frac{6^2K_1K_2}{(\pi\rho_0)^2}=:b^*$.
Therefore, we have proved the existence of a limit point $l\in \overline{D(0;\rho_0)}$ provided 
$\limsup_{\epsilon\to 0}\frac{a}{\epsilon|\ln\epsilon|^2}>b^*$ (with $a$ bounded). The previous argument also establishes the existence 
of a sequence $\bar x_\epsilon \to 0$
when $\limsup_{\epsilon\to 0}\frac{a}{\epsilon|\ln\epsilon|^2}=\infty$ with $a$ bounded. 
\end{proof}
Finally if $v_{\epsilon, a}(\bar x_\epsilon)=0$ and $\bar x_\ve\to l$, we consider the rescaled maps $\tilde v_\epsilon(s)=v_\epsilon(\bar x_\epsilon+\epsilon s)$ and proceeding as in the proof of Theorem \ref{theorem 1} (i)
we obtain up to subsequence
\[
\lim_{\epsilon\to 0} v_{\epsilon, a}(\bar x_\epsilon+\epsilon s)\to \sqrt{\mu(l)}(g\circ\eta)(\sqrt{\mu(l)} s),
\]
in $C^2_{\mathrm{loc}}(\R^2)$, for some $g\in O(2)$. 
\end{proof}

\begin{proof}[Proof of Theorem \ref{th1n} (i)]

We first notice that $v \not\equiv 0$ for $\epsilon \ll 1$. 
Indeed, by choosing a test function of the form $\psi=(\sqrt{\mu_{\mathrm{rad}}}, 0) \chi\big(\frac{\rho-r}{\epsilon^{2/3}}\big)$, with $\chi$ a cutoff function supported in the left half line  one can see that 
\[
E(\psi)\leq -\frac{1}{4\epsilon^2}\int_0^\rho \mu_{\mathrm{rad}}^2(r)\,r\dd r+O(|\ln\epsilon|)<0, \qquad  \epsilon\ll 1.
\] 
Let $x_0\in \R^2$ be such that $v(x_0)\neq 0$. Without loss of generality we may assume that $v(x_0)=(v_1(x_0), 0)$ is contained  in the open right half-plane  $P=\{x_1>0\}$. Next, consider $\tilde v=(|v_1|,v_2)$ which is another global minimizer and thus another solution.
Clearly, in a sufficiently small disc $D\subset P$ centered at $v(x_0)$ we have $v_1=|v_1|>0$, and as a consequence of the unique continuation principle (cf. \cite{sanada})
we deduce that $v\equiv \tilde v$ on $\R^2 \Rightarrow v(\R^2)\subset \overline P$. Since the same conclusion holds for 
any open half-plane containing $v(x_0)$, we also obtain $v(\R^2)\subset \{\lambda v(x_0): \ \lambda\geq0\}$. As a consequence we have $v=(v_1,0)$ with $v_1\geq 0$, and $\epsilon^2 \Delta v_1+\mu v_1-v_1^3=0$. By the maximum principle, it follows that $v_1>0$ since $v_1\not\equiv 0$.

Now to prove that $v_1$ is radial  consider  the reflection with respect to the line $x_1=0$.
We can check that $E(v,\{x_1>0\})=E(v,\{x_1<0\})$, since otherwise by even reflection we can construct a map in $H^1$ with energy smaller  than $v$. Thus, the map 
$\tilde v(x) = v(|x_1|,x_2)$ is also a minimizer, and since $\tilde v= v$ on $\{x_1>0\}$, it follows by unique continuation that 
$\tilde v\equiv v$ on $\R^2$. Repeating the same argument for any line of reflection, we deduce that $v_1$ is radial.
To complete the proof, it remains to show the uniqueness of $v$ up to rotations. Let $\tilde v=(\tilde v_1,0)$ be another global minimizer with $\tilde v_1>0$ 
and $\tilde v_1\not\equiv v_1$. 
Putting $\psi=v$ in \eqref{euler}: 
\begin{equation}\label{euler2}
\int_{\R^2} -\epsilon^2 |\nabla v|^2+\mu |v|^2-|v|^4=0\Rightarrow E(v)=-\int_{\R^2}\frac{1}{4\epsilon^2}|v|^4,
\end{equation}
we obtain an alternative expression of the energy that holds for every solution of \eqref{ode} belonging to $H^1$.
In particular, this formula implies that $v_1 $ and $\tilde v_1$ intersect for $|x|=r>0$. However, setting
\begin{equation*}
w(x)=\begin{cases}
      v(x) &\text{ for } |x|\leq r\\
       \tilde v(x) &\text{ for } |x|\geq r,
     \end{cases}
 \end{equation*}
we can see that $w$ is another global minimizer, and again by the unique continuation principle we have $w\equiv v\equiv\tilde v$. 
\end{proof}

\begin{proof}[Proof of Theorem \ref{th1n} (ii)]
We need first to establish the three Lemmas below.
\begin{lemma}\label{pst1}
If $u$ is a solution of \eqref{ode} belonging to $H^1(\R^2,\R^2)$, then for every $\psi \in H^1(\R^2,\R^2)$, we have
\begin{equation}\label{difference}
E(u+\psi)-E(u)=\int_{\R^2}\Big(\frac{1}{2}|\nabla \psi|^2+\frac{(|u|^2-\mu)}{2\epsilon^2}|\psi|^2+\frac{(|\psi|^2+2(u\cdot\psi))^2}{4\epsilon^2}\Big).
\end{equation}
\end{lemma}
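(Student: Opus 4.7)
The plan is to expand $E(u+\psi)-E(u)$ term by term and then eliminate the linear terms in $\psi$ by invoking the weak Euler--Lagrange equation \eqref{euler} for $u$, tested against $\psi$ itself. All integrals encountered are finite: since $u,\psi\in H^1(\R^2,\R^2)\hookrightarrow L^p(\R^2,\R^2)$ for every $p\in[2,\infty)$, the cubic and quartic cross-terms lie in $L^1$ by Hölder, and the $\mu,f$ coefficients are bounded (and $f\in L^1\cap L^\infty$ by \eqref{hyp2}), so every expansion term is integrable.

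The algebraic expansion is straightforward. From
\begin{align*}
|\nabla(u+\psi)|^2-|\nabla u|^2 &= 2\nabla u\cdot\nabla\psi+|\nabla\psi|^2,\\
|u+\psi|^2-|u|^2 &= 2u\cdot\psi+|\psi|^2,\\
|u+\psi|^4-|u|^4 &= \bigl(|u+\psi|^2+|u|^2\bigr)\bigl(|u+\psi|^2-|u|^2\bigr)
= \bigl(2|u|^2+A\bigr)A,
\end{align*}
where $A:=2u\cdot\psi+|\psi|^2$, one collects
\[
E(u+\psi)-E(u)=L(\psi)+Q(\psi),
\]
where the linear part is
\[
L(\psi)=\int_{\R^2}\Bigl(\nabla u\cdot\nabla\psi-\tfrac{\mu}{\epsilon^2}u\cdot\psi+\tfrac{|u|^2}{\epsilon^2}u\cdot\psi-\tfrac{a}{\epsilon}f\cdot\psi\Bigr),
\]
and the remainder is
\[
Q(\psi)=\int_{\R^2}\Bigl(\tfrac{1}{2}|\nabla\psi|^2-\tfrac{\mu}{2\epsilon^2}|\psi|^2+\tfrac{|u|^2}{2\epsilon^2}|\psi|^2+\tfrac{A^2}{4\epsilon^2}\Bigr).
\]
Grouping the two quadratic potential terms gives $\frac{(|u|^2-\mu)}{2\epsilon^2}|\psi|^2$, and the last term is precisely $\frac{(|\psi|^2+2u\cdot\psi)^2}{4\epsilon^2}$, yielding the right-hand side of \eqref{difference}.

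It remains to show $L(\psi)=0$. This is exactly the weak form \eqref{euler} of \eqref{ode} divided by $\epsilon^2$ and evaluated at the test function $\psi$. Since $u\in H^1(\R^2,\R^2)$ is a solution of \eqref{ode}, \eqref{euler} holds for all test functions in $H^1(\R^2,\R^2)$ by a standard density/integration-by-parts argument, so in particular for $\psi$ itself. This gives $L(\psi)=0$ and completes the proof.

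There is no serious obstacle: the only point requiring any care is justifying the use of $\psi\in H^1$ (rather than $C_0^\infty$) as a test function in \eqref{euler}, which follows from the $L^p$ integrability noted above together with density of $C_0^\infty$ in $H^1$.
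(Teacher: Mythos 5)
Your proposal is correct and follows essentially the same route as the paper: expand $E(u+\psi)-E(u)$, use the weak Euler--Lagrange equation \eqref{euler} (which the paper already states for all test functions in $H^1(\R^2,\R^2)$) to cancel the terms linear in $\psi$, and collect the remaining quadratic and higher-order terms into the stated expression. The paper merely organizes the gradient bookkeeping through an intermediate identity, but the substance is identical.
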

\begin{proof}
The Euler-Lagrange equation \eqref{euler} gives
\begin{equation}\label{eqq1}
\int_{\R^2} \Big(\sum_{j=1,2}\nabla \psi_j\cdot\nabla u_j\Big)=\int_{\R^2}\Big(\frac{\mu}{\epsilon^2}\psi\cdot u -\frac{|u|^2 \psi\cdot u}{\epsilon^2}+\frac{a}{\epsilon} f\cdot \psi\Big).
\end{equation}
On the other hand, we have the identity
\begin{equation}\label{eqq2}
\int_{\R^2}\Big(\frac{1}{2} |\nabla \psi+\nabla u|^2+\frac{1}{2}|\nabla u|^2-\sum_{j=1,2}(\nabla \psi_j+\nabla u_j)\cdot\nabla u_j\Big)=\int_{\R^2}\frac{1}{2}|\nabla \psi|^2.
\end{equation} 
Adding \eqref{eqq1} and \eqref{eqq2}, we obtain
\begin{equation}\label{eqq3}
\int_{\R^2}\frac{1}{2}\Big( |\nabla(\psi+u)|^2-|\nabla u|^2\Big)=\int_{\R^2}\Big(\frac{1}{2}|\nabla \psi|^2+\frac{\mu}{\epsilon^2}\psi\cdot u-\frac{|u|^2\psi\cdot u}{\epsilon^2}+\frac{a}{\epsilon} f\cdot \psi\Big)=:B,
\end{equation} 
and thus
\begin{align}
E(u+\psi)-E(u)&=B+\int_{\R^2}\Big(-\frac{\mu}{2\epsilon^2}\big(|u+\psi|^2-|u|^2\big)+\frac{1}{4\epsilon^2}\big(|u+\psi|^4-|u|^4\big)-\frac{a}{\epsilon} f\cdot \psi\Big)\nonumber\\
&=\int_{\R^2}\Big(\frac{1}{2}|\nabla \psi|^2+\frac{(|u|^2-\mu)}{2\epsilon^2}|\psi|^2+\frac{(|\psi|^2+2(u\cdot\psi))^2}{4\epsilon^2}\Big).\nonumber
\end{align}
\end{proof}
\begin{lemma}\label{p4b}
For every $a> 0$ let $\mu_a:\R^2\to\R$ be a measurable function satisfying $\mu_a\leq \mu$, and $\lim_{a\to\infty} \mu_a = -\infty$ a.e., then given $\epsilon>0$ there exists $A>0$ such that for every $a >A$ we have
\begin{equation}
\label{Poincarebis}
\int_{\R^2}\mu_a|\psi|^2 < \epsilon^2 \int_{\R^2}|\nabla \psi|^2,\qquad  \forall \psi \in H^1(\R^2,\R^2), \, \psi \neq 0.
\end{equation}
\end{lemma}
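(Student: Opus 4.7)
The plan is a contradiction argument exploiting the fact that $\mu$ is strictly negative at infinity. Suppose the conclusion fails for some $\epsilon>0$: there exist sequences $a_n\to\infty$ and $\psi_n\in H^1(\R^2,\R^2)\setminus\{0\}$, normalized by $\|\psi_n\|_{L^2}=1$, satisfying $\int\mu_{a_n}|\psi_n|^2\geq \epsilon^2\int|\nabla\psi_n|^2$. Since \eqref{hyp2} forces $\supp\mu^+\subset\overline{D(0;\rho)}$ with $\mu^+\in L^\infty$, the upper bound $\int\mu_{a_n}|\psi_n|^2\leq\|\mu\|_\infty$ gives $\|\nabla\psi_n\|_{L^2}^2\leq\|\mu\|_\infty/\epsilon^2$, so $\psi_n$ is bounded in $H^1$. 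Up to subsequence $\psi_n\rightharpoonup\psi_\infty$ weakly in $H^1$ and strongly in $L^2_{\mathrm{loc}}$.

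The first step is to prove $\int\mu_{a_n}^+|\psi_n|^2\to 0$. Since $\mu_{a_n}\to-\infty$ a.e., $\mu_{a_n}^+\to 0$ a.e., and the uniform bound $\mu_{a_n}^+\leq\mu^+\in L^2(\R^2)$ yields $\|\mu_{a_n}^+\|_{L^2}\to 0$ by dominated convergence. The Sobolev embedding $H^1(\R^2)\hookrightarrow L^4(\R^2)$ and Cauchy--Schwarz then give
\[
\int\mu_{a_n}^+|\psi_n|^2\leq\|\mu_{a_n}^+\|_{L^2}\|\psi_n\|_{L^4}^2\to 0.
\]
Combined with the assumed reverse inequality, this squeezes both $\int\mu_{a_n}^-|\psi_n|^2$ and $\epsilon^2\|\nabla\psi_n\|_{L^2}^2$ down to zero.

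The main obstacle is the absence of a Poincar\'e inequality on $H^1(\R^2)$: the normalization $\|\psi_n\|_{L^2}=1$ is a priori compatible with $\|\nabla\psi_n\|_{L^2}\to 0$ when the mass of $\psi_n$ escapes to infinity. I would rule this out by turning $\int\mu_{a_n}^-|\psi_n|^2\to 0$ into localized information on $\psi_n$: for fixed $R>0$, Chebyshev yields $\int_{\{\mu_{a_n}^-\geq 1\}}|\psi_n|^2\leq\int\mu_{a_n}^-|\psi_n|^2\to 0$, while $|D(0;R)\cap\{\mu_{a_n}>-1\}|\to 0$ by dominated convergence (the indicator $\mathbf{1}_{\{\mu_{a_n}>-1\}}\to 0$ a.e.\ on $D(0;R)$). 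Cauchy--Schwarz applied with the uniform $L^4$ bound on $\psi_n$ handles the complementary piece, so $\int_{D(0;R)}|\psi_n|^2\to 0$ for every fixed $R$, forcing $\int_{\R^2\setminus D(0;R)}|\psi_n|^2\to 1$.

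To close the argument I would invoke the behavior of $\mu$ at infinity. Since $\mu_{\mathrm{rad}}$ is strictly decreasing on $[0,\infty)$ and bounded by \eqref{hyp2}, with $\mu_{\mathrm{rad}}(\rho)=0$, it admits a finite limit $\mu_\infty<0$. Picking $R$ so large that $\mu\leq\mu_\infty/2$ outside $D(0;R)$, we obtain $\mu_{a_n}^-\geq|\mu_\infty|/2$ on $\R^2\setminus D(0;R)$, and hence
\[
\int\mu_{a_n}^-|\psi_n|^2\geq\frac{|\mu_\infty|}{2}\int_{\R^2\setminus D(0;R)}|\psi_n|^2\to\frac{|\mu_\infty|}{2}>0,
\]
contradicting the previously established $\int\mu_{a_n}^-|\psi_n|^2\to 0$.
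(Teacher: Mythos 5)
Your argument is correct, and although it follows the same overall contradiction-plus-escaping-mass strategy as the paper, the technical route is genuinely different. The paper normalizes $\|\psi_n\|_{H^1}=1$, extracts a weak $H^1$ limit $\Psi$, and uses Rellich compactness ($\psi_n\to\Psi$ in $L^2_{\mathrm{loc}}$) together with $\max(\mu_{a_n},0)\to 0$ a.e.\ to force $\int_{\R^2}|\nabla\psi_n|^2\to 0$, hence $\Psi\equiv 0$, hence the $L^2$ mass leaves the bounded set $I_\delta=\{\mu>-\delta\}$, where $\mu_{a_n}\le\mu\le-\delta$ yields the contradiction. You normalize in $L^2$, never actually use the weak limit you extract, and replace local compactness by the planar embedding $H^1(\R^2)\hookrightarrow L^4(\R^2)$: the splitting $\mu_{a_n}=\mu_{a_n}^+-\mu_{a_n}^-$ with $\|\mu_{a_n}^+\|_{L^2}\to 0$ kills the positive part, and the Chebyshev argument on $\{\mu_{a_n}^-\ge 1\}$ combined with $|D(0;R)\cap\{\mu_{a_n}>-1\}|\to 0$ and the non-concentration supplied by the $L^4$ bound shows the mass leaves every fixed ball; the conclusion then follows from $\lim_{r\to\infty}\mu_{\mathrm{rad}}(r)=\mu_\infty<0$, which plays exactly the role that the boundedness of $I_\delta$ plays in the paper. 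Your version is slightly more self-contained (no Rellich--Kondrachov, no identification of the weak limit) and exploits the hypothesis $\mu_{a_n}\to-\infty$ a.e.\ more directly inside each ball, at the price of invoking the two-dimensional Ladyzhenskaya/Sobolev inequality; the paper's version is shorter because the weak limit performs the localization in one stroke. Both proofs rest on the same structural facts about $\mu$ (bounded, and negative and bounded away from zero at infinity), and yours is complete as written.
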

\begin{proof}
By homogeneity, it is sufficient to prove \eqref{Poincarebis} for $\left\|\psi \right\|_{H^{1}}=1$. Suppose by contradiction that 
%
\eqref{Poincarebis} does not hold. Then there exist a constant $C_0>0$, a sequence $a_n\to\infty$, and a sequence $\psi_n \in H^1(\R^2,\R^2)$, with $\left\|\psi_n \right\|_{H^{1}}=1$, such that
\begin{equation}\label{Poincare2bis}
\int_{\R^2}\mu_{a_n}|\psi_n|^2 \geq C_0\int_{\R^2}|\nabla \psi_n|^2.
\end{equation}
Since $\left\|\psi_n \right\|_{H^{1}}$ is bounded, we can extract a subsequence, still called $\psi_n$, such that $\psi_n\rightharpoonup \Psi$ weakly in $H^1$, and $\psi_n\to \Psi$ in $L^2_{\mathrm{loc}}$. Writing
$$ C_0\int_{\R^2}|\nabla \psi_n|^2 \leq \int_{\R^2}\mu_{a_n}|\psi_n|^2 \leq  \int_{I_{\delta}}\max(\mu_{a_n},0)|\psi_n|^2,$$
where $I_{\delta}:=\{x\in\R^2: \mu(x)>-\delta\}$ and $\delta >0$ is small,
we see that  $\lim_{n \to\infty}\int_{\R^2}|\nabla \psi_n|^2=0$. This implies by lower semicontinuity that 
$\int_{\R^2} |\nabla\Psi|^2\leq \liminf \int_{\R^2}  |\nabla \psi_n|^2=0$, hence
$\Psi\equiv 0$. In addition, we have $\lim_{n \to\infty}\int_{\R^2}|\psi_n|^2=1$, $\lim_{n \to\infty}\int_{I_\delta}|\psi_n|^2=0$, and $\lim_{n \to\infty}\int_{\R^2\setminus I_\delta}|\psi_n|^2=1$. As a consequence, 
$$\int_{\R^2}\mu_{a_n}|\psi_n|^2\leq \left\|\mu \right\|_{L^{\infty}}\int_{I_\delta}|\psi_n|^2-\delta\int_{\R^2\setminus I_\delta}|\psi_n|^2,$$ and taking the limit we find that
$\int_{\R^2}\mu_{a_n}|\psi_n|^2\leq-\frac{\delta}{2}$, for $n$ big enough, which contradicts \eqref{Poincare2bis}. 
\end{proof}
\begin{lemma}\label{th4c}
For $\epsilon>0$ and $x_0\in\R^2$ fixed, the global minimizer satisfies
\begin{equation}
\lim_{ a\to \infty} a^{-1/3}v_{\epsilon,a}(x_0+a^{-1/3}s)=\frac{\epsilon^{1/3} f(x_0)}{|f(x_0)|^{2/3}},
\end{equation} for the $C^2_{\mathrm{ loc}}(\R^2,\R^2)$ convergence.
\end{lemma}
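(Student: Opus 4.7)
\emph{Proof outline.} The target constant $\epsilon^{1/3} f(x_0)/|f(x_0)|^{2/3}$ is precisely the unique root of the algebraic equation $|w|^2 w = \epsilon f(x_0)$, so at the scale $a^{-1/3}$ around $x_0$ the balance $\epsilon a f \sim |v|^2 v$ should dominate \eqref{ode}. I will make this rigorous by rescaling, an Ascoli compactness step, and the convex-potential Liouville argument already invoked in the proof of Theorem \ref{theorem 1} (iii).

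Set $w_a(s) := a^{-1/3} v_{\epsilon,a}(x_0 + a^{-1/3}s)$. Substituting into \eqref{ode} and dividing by $a$ gives the rescaled equation
\begin{equation*}
\epsilon^2 \Delta w_a + a^{-2/3}\,\mu(x_0 + a^{-1/3}s)\, w_a - |w_a|^2 w_a + \epsilon\, f(x_0 + a^{-1/3}s) = 0, \quad s\in\R^2.
\end{equation*}
Because $v_{\epsilon,a}\to 0$ at infinity (Lemma \ref{lem exist min}), $w_a$ also decays at infinity and attains its extrema. The cubic maximum principle of Lemma \ref{s3}, applied componentwise to this rescaled PDE, yields at a maximum of $w_{a,1}$ the inequality $w_{a,1}^3 \leq a^{-2/3}\|\mu\|_{L^\infty} w_{a,1} + \epsilon\|f\|_{L^\infty}$, which produces a uniform bound $\|w_a\|_{L^\infty}\leq C(\epsilon)$ for $a$ large. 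An elliptic bootstrap (differentiating the equation, as in previous proofs) then provides uniform $C^2_{\mathrm{loc}}$ bounds, and Ascoli together with a diagonal argument yields a subsequence $w_{a_n}\to w_\infty$ in $C^2_{\mathrm{loc}}(\R^2,\R^2)$. Passing to the limit (using $a^{-2/3}\mu \to 0$ and $f(x_0 + a^{-1/3}s)\to f(x_0)$ uniformly on compact sets),
\begin{equation*}
\epsilon^2 \Delta w_\infty = \nabla W(w_\infty), \qquad W(w) := \tfrac{1}{4}|w|^4 - \epsilon f(x_0)\cdot w.
\end{equation*}

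The potential $W$ is convex, with Hessian $D^2 W(w) = |w|^2 I + 2\, w\otimes w \geq 0$. Applying the chain rule to $W(w_\infty)$ and the PDE,
\begin{equation*}
\Delta(W(w_\infty)) = \epsilon^{-2}|\nabla W(w_\infty)|^2 + \sum_i D^2W(w_\infty)(\partial_i w_\infty,\partial_i w_\infty) \geq 0,
\end{equation*}
so $W(w_\infty)$ is bounded and subharmonic on $\R^2$, hence constant. Both nonnegative summands must therefore vanish; in particular $\nabla W(w_\infty)\equiv 0$. For $x_0\neq 0$, the hypothesis $f_{\mathrm{rad}}>0$ on $(0,\infty)$ gives $f(x_0)\neq 0$, so the equation $|w|^2 w = \epsilon f(x_0)$ has the unique solution $w_* := \epsilon^{1/3} f(x_0)/|f(x_0)|^{2/3}$, and thus $w_\infty\equiv w_*$. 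Independence of the limit from the extracted subsequence upgrades subsequential convergence to convergence of the full family as $a\to\infty$.

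The step I expect to be most delicate is the uniform $L^\infty$ bound on $w_a$: Lemma \ref{s3} is stated for $\epsilon a$ bounded and so does not directly cover $\epsilon$ fixed with $a\to\infty$. Re-running the cubic maximum-principle argument on the rescaled equation (as above) is however straightforward and produces a bound of size $O(\epsilon^{1/3})$ uniformly in $a$, exactly what is needed for compactness. Everything else is routine elliptic regularity together with the Liouville trick for convex potentials borrowed from Theorem \ref{theorem 1} (iii).
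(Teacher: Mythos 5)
Your proposal is correct and follows essentially the same route as the paper: the same rescaling $a^{-1/3}v(x_0+a^{-1/3}s)$, a re-run of the Lemma \ref{s3} maximum-principle bound on the rescaled equation (the paper likewise notes that Lemma \ref{s3} must be ``repeated'' rather than quoted, since its hypothesis $\epsilon a$ bounded fails here), Ascoli plus bootstrap, and identification of the limit as the unique bounded solution of $\epsilon^2\Delta\tilde V-|\tilde V|^2\tilde V+\epsilon f(x_0)=0$ via the convex-potential Liouville argument from Theorem \ref{theorem 1} (iii). Your added observation that the potential $\tfrac14|w|^4-\epsilon f(x_0)\cdot w$ is only convex (not strictly convex, since the Hessian degenerates at $w=0$) and that the argument still closes because $|\nabla W(w_\infty)|^2$ must vanish is a worthwhile refinement of the paper's terser citation.
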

\begin{proof}
We consider the rescaled maps $\tilde v(s)=a^{-1/3}v(x_0+a^{-1/3}s)$, satisfying
\begin{equation}\label{oderesc}
\epsilon^2\Delta\tilde v(s)+a^{-2/3}\mu(x_0+a^{-1/3}s) \tilde v(s)-|\tilde v|^2(s)\tilde v(s)+\epsilon f(x_0+a^{-1/3}s)=0, \, \forall s\in \R^2.
\end{equation}
Repeating the arguments in the proof of Lemma \ref{s3} one can see that
when $\epsilon$ is fixed and $\frac{1}{a}$ remains bounded, the maps $\tilde v_{\epsilon,a}$ are uniformly bounded up to the second derivatives. Therefore proceeding as in the proof of Theorem \ref{theorem 1} (i) and (ii), we deduce the convergence of $\tilde v$ as $a\to\infty$ to the unique bounded solution of 
\begin{equation}\label{odeglc}
\epsilon^2\Delta\tilde V(s)-|\tilde V(s)|^2\tilde V(s)+\epsilon f(x_0)=0, \, \forall s\in\R^2,
\end{equation}
which is the constant $\tilde V\equiv\frac{\epsilon^{1/3} f(x_0)}{|f(x_0)|^{2/3}}$.
\end{proof}
Let $\epsilon>0$ be fixed and let $\mu_a:=\mu-|v|^2$, where $v:=v_{\epsilon,\alpha}$ is a global minimizer. By Lemma \ref{th4c}, we know that for every $x \neq 0$, $\mu_a(x)$ converges pointwise to $-\infty$, as $a\to \infty$. Thus, by \eqref{Poincarebis}, there exists $A>0$, such that for every $a>A$ we have
$$\int_{\R^2}\Big(\frac{1}{2}|\nabla \psi|^2+\frac{(|v|^2-\mu)}{2\epsilon^2}|\psi|^2\Big)>0, \qquad \forall \psi \in H^1(\R^2,\R^2), \quad \psi \neq 0,$$
and also $E(v+\psi)>E(v)$ in view of \eqref{difference}. In particular it follows that when $a>A$, the global minimizer is unique and radial, since $v\equiv g^{-1}vg$, $\forall g \in O(2)$. 
\end{proof}



\providecommand{\bysame}{\leavevmode\hbox to3em{\hrulefill}\thinspace}
\providecommand{\MR}{\relax\ifhmode\unskip\space\fi MR }
\providecommand{\MRhref}[2]{%
  \href{http://www.ams.org/mathscinet-getitem?mr=#1}{#2}
}
\providecommand{\href}[2]{#2}

\end{document}